\documentclass[a4paper, cleveref, autoref, thm-restate]{lipics-v2021-adapted}
\usepackage{mathtools}
\usepackage{physics}
\usepackage{graphicx}
\usepackage[dvipsnames, table]{xcolor}
\usepackage{cite}
\usepackage{bm}
\usepackage{booktabs}
\usepackage{diagbox}
\usepackage{MnSymbol}
\usepackage{complexity}
\usepackage{subcaption}
\newsavebox{\imagebox}

\usepackage[capitalise, noabbrev]{cleveref}

\nolinenumbers
\hideLIPIcs

\definecolor{red30}{RGB}{229,242,251}
\definecolor{green30}{RGB}{132,195,237}
\definecolor{openColor}{RGB}{241,217,235}
\definecolor{openColorCond}{RGB}{207,128,187}
\definecolor{signalblue2}{RGB}{107,143,240}
\definecolor{signalblue3}{RGB}{146,177,240}
\definecolor{signalblue4}{RGB}{191,207,240}
\definecolor{maygreen3}{RGB}{221,233,197}
\definecolor{KITgray30}{RGB}{179,179,179}
\definecolor{KITlilac}{RGB}{160,0,120}
\definecolor{KITcyanblue}{RGB}{80,170,230}

\newcommand{\N}{\mathbb{N}}
\newcommand{\calG}{\mathcal{G}}
\newcommand{\calH}{\mathcal{H}}
\newcommand{\calF}{\mathcal{F}}

\newcommand{\calB}{\mathcal{B}}

\newcommand{\calM}{\mathcal{M}} 

\newcommand{\calQ}{\mathcal{Q}}
\newcommand{\starchrom}{\chi_{\mathrm{s}}}

\usepackage{xargs}
\usepackage{ifthen}

\newcommand{\lab}[1]{\text{\fontfamily{lmss}\selectfont #1}}

\newcommandx{\icn}[3]{%
   \ifthenelse{ \equal{#3}{} }
      {\ensuremath{\operatorname{ic}_{\text{\fontfamily{lmss}\selectfont #1}}^{#2}}}
      {\ensuremath{\operatorname{ic}_{\text{\fontfamily{lmss}\selectfont #1}}^{#2}(#3)}}
}
\newcommandx{\cn}[3]{%
   \ifthenelse{ \equal{#3}{} }
      {\ensuremath{\operatorname{c}_{\text{\fontfamily{lmss}\selectfont #1}}^{#2}}}
      {\ensuremath{\operatorname{c}_{\text{\fontfamily{lmss}\selectfont #1}}^{#2}(#3)}}
}

\newcommand{\setmid}{\ensuremath{\; \colon \;}}
\newcommand{\shortref}[1]{{\color{black}$\langle$\ref{#1}$\rangle$}}
\newcommand{\shortrefs}[2]{{\color{black}$\langle$\ref{#1},\ref{#2}$\rangle$}}

\newcommand{\ra}[1]{\renewcommand{\arraystretch}{#1}}
\newcolumntype{Y}{@{\extracolsep{3pt}}>{\centering\arraybackslash}X@{\extracolsep{0pt}}}

\DeclarePairedDelimiterX\set[1]\lbrace\rbrace{\def\given{\setmid}#1}

\DeclareMathOperator{\tw}{tw}
\DeclareMathOperator{\mad}{mad}

\DeclareMathOperator{\shift}{Sh}
\DeclareMathOperator{\Forb}{Forb}

\author{Miriam Goetze}{Karlsruhe Institute of Technology, Germany}{miriam.goetze@kit.edu}{https://orcid.org/0000-0001-8746-522X}{funded by the Deutsche Forschungsgemeinschaft (DFG, German Research Foundation) -- 520723789}

\author{Yidi Zang}{Karlsruhe Institute of Technology, Germany}{}{https://orcid.org/0009-0002-8024-7347}{}

\authorrunning{M. Goetze, Y. Zang}
\title{Boundedness and Separation Between Induced and Non-Induced Covering Numbers}

\newtheorem{hypothesis}[theorem]{Hypothesis}
\crefname{question}{Question}{Questions}
\Crefname{question}{Question}{Questions}
\crefname{conjecture}{Conjecture}{Conjectures}
\Crefname{conjecture}{Conjecture}{Conjectures}
\crefname{hypothesis}{Hypothesis}{Hypotheses}

\ccsdesc[100]{Mathematics of computing~Graph theory}

\keywords{covering numbers, induced subgraphs, arboricity, binding function, hereditary}

\AIdecl{Generative AI has been used to obtain an initial overview of results in the literature implied or generalized by this work. The results have subsequently been verified by the authors.}

\begin{document}

\maketitle

\begin{abstract}
    There are four covering numbers~$\cn{g}{\calG}{H}, \cn{u}{\calG}{H}, \cn{l}{\calG}{H},\cn{f}{\calG}{H} $, each of which measures in a slightly different way how well the edges of a graph~$H$ (called a \emph{host}) can be covered with graphs of a class~$\calG$ (called a \emph{guest class}).
    If we require the graphs of~$\calG$ to correspond to induced subgraphs of~$H$, we obtain an induced variant~$\icn{x}{\calG}{}$ for each covering number~$\cn{x}{\calG}{}$ which satisfies $\cn{x}{\calG}{H} \leq \icn{x}{\calG}{H}$ for every graph~$H$.
    Yet, in general $\icn{x}{\calG}{}$ cannot be bounded in terms of~$\cn{x}{\calG}{}$.
    If there exists for a guest class~$\calG$ and a host class~$\calH$ a function~$f$ such that $\icn{x}{\calG}{H} \leq f(\cn{x}{\calG}{H})$ for every graph~$H \in \calH$, we call~$f$ a \emph{binding function}.
    
    Within this work, we study for which structural properties of a guest class~$\calG$ and a host class~$\calH$ such binding functions exist.
    We consider guest classes~$\calG$ that are monotone, hereditary, component-closed or neither, and have bounded maximum average degree, bounded chromatic number or neither.
    The host classes~$\calH$ we consider have bounded treewidth, exclude some minor, have bounded maximum average degree, bounded chromatic number, or none of these properties. 
    For $219$ out of the $240$~possible $3$-tuples of properties for~$\calG$ and~$\calH$ and covering numbers we either provide a binding function or an example where no such function exists.
    In particular, we show that such binding functions always exist for hereditary guest classes~$\calG$ of bounded maximum average degree for three of the four covering numbers, but may not for the fourth kind.
\end{abstract}

\section{Introduction}
\label{sec:introduction}

One branch of graph theory studies how well a given graph~$H$ can be decomposed into smaller pieces of a graph class~$\calG$.  
For a (not necessarily proper) $k$-vertex-coloring of~$H$, we can interpret the color classes as such pieces.
This yields a decomposition of the vertex set of~$H$ into independent sets in the case of proper colorings, showing a relation between the well-known chromatic number~$\chi(H)$ and such decompositions.

Within this work, we consider decompositions (more generally covers) of the edges of~$H$. 
Depending on how we measure the complexity of a cover of a graph~$H$ (the \emph{host}) with graphs of a class~$\calG$ (the \emph{guest class}), we obtain four different covering numbers: the global, union, local and folded covering numbers denoted by~$\cn{g}{\calG}{H}$, $\cn{u}{\calG}{H}$, $\cn{l}{\calG}{H}$ and~$\cn{f}{\calG}{H}$ respectively (cf. \cref{subparagraph:covering_numbers} for a formal definition). 
The covering numbers have been introduced by Knauer and Ueckerdt to unify the analysis and relationship of existing graph parameters \cite{Knauer2016_3w3c1g}.
Indeed, many parameters based on vertex- and edge-colorings correspond to a covering number for a suitable guest class~$\calG$.
Chromatic index (cover with matchings \cite{caoGraphEdgeColoring2019}), star arboricity (cover with stars \cite{goncalvesStarCaterpillarArboricity2009,Knauer2016_3w3c1g}), arboricity (cover with forests \cite{nash-william1964}) and thickness (cover with planar graphs \cite{mutzelThicknessGraphsSurvey1998}) can each be interpreted as a covering number.
\begin{table}[ht]
    \ra{1.3}
    \def\smallDist{3}
    \def\largeDist{8}
    \small
    \centering
    \begin{subtable}[t]{\textwidth}
        \centering
        \begin{tabularx}{0.83\textwidth}{c c@{\extracolsep{0pt}} @{\extracolsep{\smallDist pt}}c@{\extracolsep{0cm}} @{\extracolsep{\largeDist pt}}c@{\extracolsep{0cm}} @{\extracolsep{\smallDist pt}}c@{\extracolsep{0cm}} @{\extracolsep{\smallDist pt}}c@{\extracolsep{0cm}} @{\extracolsep{\largeDist pt}}c@{\extracolsep{0cm}} @{\extracolsep{\smallDist pt}}c@{\extracolsep{0cm}} @{\extracolsep{\smallDist pt}}c@{\extracolsep{0cm}}}
            \toprule
            \multirow{2}{*}{\diagbox[width=6em]{\phantom{vu}$\calH$}{$\calG$}} & \multirow{2}{*}{any} & \multirow{2}{*}{cc} & \multicolumn{3}{c}{hereditary} & \multicolumn{3}{c}{monotone} \\
            \arrayrulecolor{black}\cmidrule(r){4-6} \cmidrule(l){7-9}
            &   &    & \multicolumn{1}{c}{any} & $\chi$ & $\mad$ & \multicolumn{1}{c}{any} & $\chi$ & \multicolumn{1}{c}{$\mad$} \\
            \cmidrule{2-9}
            \arrayrulecolor{white}
            any & \multicolumn{1}{|c|}{\cellcolor{red30}\phantom{\shortref{lem:bounded_g_H_minor_G_hereditary}}} & \multicolumn{1}{|c|}{\cellcolor{red30}\phantom{\shortref{lem:bounded_g_H_minor_G_hereditary}}} & \multicolumn{1}{|c|}{\cellcolor{red30}\shortref{lem:sep_gulf_H_any_G_hereditary}} & \multicolumn{1}{|c|}{\cellcolor{red30}\phantom{\shortref{lem:bounded_g_H_minor_G_hereditary}}} & \multicolumn{1}{|c|}{\cellcolor{red30}\phantom{\shortref{prop:bounded_H_mad_tw_G_hereditary}}} & \multicolumn{1}{|c|}{\cellcolor{red30}\phantom{\shortref{prop:bounded_H_mad_tw_G_hereditary}}} & \multicolumn{1}{|c|}{\cellcolor{red30}\phantom{\shortref{prop:bounded_H_mad_tw_G_hereditary}}} & \multicolumn{1}{|c|}{\cellcolor{red30}\phantom{\shortref{lem:sep_g_H_mad_G_monotone}}} \\
            \cmidrule{2-9}
            bounded $\chi$ & \multicolumn{1}{|c|}{\cellcolor{red30}} & \multicolumn{1}{|c|}{\cellcolor{red30}} & \multicolumn{1}{|c|}{\cellcolor{red30}\phantom{\shortref{prop:bounded_H_mad_tw_G_hereditary}}} & \multicolumn{1}{|c|}{\cellcolor{red30}} & \multicolumn{1}{|c}{\cellcolor{red30}} & \multicolumn{1}{|c|}{\cellcolor{red30}} & \multicolumn{1}{|c|}{\cellcolor{red30}} & \multicolumn{1}{|c|}{\cellcolor{red30}}\\
             \cmidrule{2-9}
            bounded $\mad$ & \multicolumn{1}{|c|}{\cellcolor{red30}} & \multicolumn{1}{|c}{\cellcolor{red30}} & \multicolumn{1}{|c|}{\cellcolor{red30}} & \multicolumn{1}{|c|}{\cellcolor{red30}} & \multicolumn{1}{|c}{\cellcolor{red30}} & \multicolumn{1}{|c|}{\cellcolor{red30}} & \multicolumn{1}{|c|}{\cellcolor{red30}} & \multicolumn{1}{|c|}{\cellcolor{red30}\shortref{lem:sep_g_H_mad_G_monotone}}\\
             \cmidrule{2-9}
            $M$-minor-free & \multicolumn{1}{|c|}{\cellcolor{red30}} & \multicolumn{1}{|c}{\cellcolor{red30}\phantom{\shortref{lem:sep_gul_H_bounded_tw_G_cc}}} & \multicolumn{1}{|c|}{\cellcolor{green30}\shortref{lem:bounded_g_H_minor_G_hereditary}} & \multicolumn{1}{|c|}{\cellcolor{green30}} & \multicolumn{1}{|c}{\cellcolor{green30}} & \multicolumn{1}{|c|}{\cellcolor{green30}} & \multicolumn{1}{|c|}{\cellcolor{green30}} & \multicolumn{1}{|c|}{\cellcolor{green30}}\\
             \cmidrule{2-9}
            bounded $\tw$ & \multicolumn{1}{|c|}{\cellcolor{red30}\phantom{\shortref{lem:sep_gul_H_bounded_tw_G_cc}}} & \multicolumn{1}{|c|}{\cellcolor{red30}\shortref{lem:sep_gul_H_bounded_tw_G_cc}}& \multicolumn{1}{|c|}{\cellcolor{green30}\shortref{prop:bounded_H_mad_tw_G_hereditary}} & \multicolumn{1}{|c|}{\cellcolor{green30}} & \multicolumn{1}{|c|}{\cellcolor{green30}} & \multicolumn{1}{|c|}{\cellcolor{green30}\phantom{\shortref{prop:bounded_H_mad_tw_G_hereditary}}} & \multicolumn{1}{|c|}{\cellcolor{green30}} & \multicolumn{1}{|c|}{\cellcolor{green30}}\\
            \arrayrulecolor{black}\bottomrule
        \end{tabularx}
        
        \medskip
        
        \caption{Is $\calH$ $(\icn{g}{\calG}{},\cn{g}{\calG}{})$-bounded? \;\; \textcolor{green30}{$\blacksquare$} Always Yes \;\; \textcolor{red30}{$\blacksquare$} Sometimes No}
        \label{fig:global}
    \end{subtable}
    \begin{subtable}[t]{\textwidth}
        \centering
        \begin{tabularx}{0.83\textwidth}{c c@{\extracolsep{0pt}} @{\extracolsep{\smallDist pt}}c@{\extracolsep{0cm}} @{\extracolsep{\largeDist pt}}c@{\extracolsep{0cm}} @{\extracolsep{\smallDist pt}}c@{\extracolsep{0cm}} @{\extracolsep{\smallDist pt}}c@{\extracolsep{0cm}} @{\extracolsep{\largeDist pt}}c@{\extracolsep{0cm}} @{\extracolsep{\smallDist pt}}c@{\extracolsep{0cm}} @{\extracolsep{\smallDist pt}}c@{\extracolsep{0cm}}}
            \toprule
            \multirow{2}{*}{\diagbox[width=6em]{\phantom{vu}$\calH$}{$\calG$}} & \multirow{2}{*}{any} & \multirow{2}{*}{cc} & \multicolumn{3}{c}{hereditary} & \multicolumn{3}{c}{monotone} \\
            \arrayrulecolor{black}\cmidrule(r){4-6} \cmidrule(l){7-9}
            &   &    & \multicolumn{1}{c}{any} & $\chi$ & $\mad$ & \multicolumn{1}{c}{any} & $\chi$ & \multicolumn{1}{c}{$\mad$} \\
            \cmidrule{2-9}
            \arrayrulecolor{white}
            any & \multicolumn{1}{|c|}{\cellcolor{red30}\phantom{\shortref{lem:bounded_g_H_minor_G_hereditary}}} & \multicolumn{1}{|c|}{\cellcolor{red30}\phantom{\shortref{lem:bounded_g_H_minor_G_hereditary}}} & \multicolumn{1}{|c|}{\cellcolor{red30}\shortref{lem:sep_gulf_H_any_G_hereditary}} & \multicolumn{1}{|c|}{\cellcolor{openColor}\phantom{\shortref{lem:bounded_g_H_minor_G_hereditary}}} & \multicolumn{1}{|c|}{\cellcolor{green30}\shortref{prop:bounded_H_any_G_hereditary_mad}} & \multicolumn{1}{|c|}{\cellcolor{openColor}\phantom{\shortref{prop:bounded_H_mad_tw_G_hereditary}}} & \multicolumn{1}{|c|}{\cellcolor{openColor}\phantom{\shortref{prop:bounded_H_mad_tw_G_hereditary}}} & \multicolumn{1}{|c|}{\cellcolor{green30}\phantom{\shortref{prop:bounded_H_mad_tw_G_hereditary}}} \\
            \cmidrule{2-9}
            bounded $\chi$ & \multicolumn{1}{|c|}{\cellcolor{red30}} & \multicolumn{1}{|c|}{\cellcolor{red30}} & \multicolumn{1}{|c|}{\cellcolor{openColorCond}\shortref{cond_thm:bounded_c_gulf_H_chi_G_hereditary}} & \multicolumn{1}{|c|}{\cellcolor{openColorCond}} & \multicolumn{1}{|c}{\cellcolor{green30}} & \multicolumn{1}{|c|}{\cellcolor{openColorCond}} & \multicolumn{1}{|c|}{\cellcolor{openColorCond}} & \multicolumn{1}{|c|}{\cellcolor{green30}}\\
             \cmidrule{2-9}
            bounded $\mad$ & \multicolumn{1}{|c|}{\cellcolor{red30}} & \multicolumn{1}{|c}{\cellcolor{red30}} & \multicolumn{1}{|c|}{\cellcolor{green30}\shortref{prop:bounded_H_mad_tw_G_hereditary}} & \multicolumn{1}{|c|}{\cellcolor{green30}} & \multicolumn{1}{|c}{\cellcolor{green30}} & \multicolumn{1}{|c|}{\cellcolor{green30}\phantom{\shortref{prop:bounded_H_mad_tw_G_hereditary}}} & \multicolumn{1}{|c|}{\cellcolor{green30}} & \multicolumn{1}{|c|}{\cellcolor{green30}}\\
             \cmidrule{2-9}
            $M$-minor-free & \multicolumn{1}{|c|}{\cellcolor{red30}} & \multicolumn{1}{|c}{\cellcolor{red30}\phantom{\shortref{lem:sep_gul_H_bounded_tw_G_cc}}} & \multicolumn{1}{|c|}{\cellcolor{green30}} & \multicolumn{1}{|c|}{\cellcolor{green30}} & \multicolumn{1}{|c}{\cellcolor{green30}} & \multicolumn{1}{|c|}{\cellcolor{green30}} & \multicolumn{1}{|c|}{\cellcolor{green30}} & \multicolumn{1}{|c|}{\cellcolor{green30}}\\
             \cmidrule{2-9}
            bounded $\tw$ & \multicolumn{1}{|c|}{\cellcolor{red30}\phantom{\shortref{lem:sep_gul_H_bounded_tw_G_cc}}} & \multicolumn{1}{|c|}{\cellcolor{red30}\shortrefs{lem:sep_gul_H_bounded_tw_G_cc}{lem:sep_gulf_H_tw_2_G_cc}}& \multicolumn{1}{|c|}{\cellcolor{green30}} & \multicolumn{1}{|c|}{\cellcolor{green30}} & \multicolumn{1}{|c|}{\cellcolor{green30}} & \multicolumn{1}{|c|}{\cellcolor{green30}} & \multicolumn{1}{|c|}{\cellcolor{green30}} & \multicolumn{1}{|c|}{\cellcolor{green30}}\\
            \arrayrulecolor{black}\bottomrule
        \end{tabularx}
        
        \medskip
        
        \caption{Is $\calH$ $(\icn{x}{\calG}{},\cn{x}{\calG}{})$-bounded for $\lab{x} \in \set{\lab{u},\lab{l},\lab{f}}$? \;\; 
        $\begin{aligned}[t]
        &\text{\textcolor{green30}{$\blacksquare$} Always Yes \;\; \textcolor{red30}{$\blacksquare$} Sometimes No \;\; \textcolor{openColor}{$\blacksquare$} Unknown} \\
        &\text{\textcolor{openColorCond}{$\blacksquare$} Always yes assuming \cref{conjecture:bounded_u_H_bip_G_hereditary_bip}} 
        \end{aligned}$}
        \label{fig:union-local-folded}
    \end{subtable}
    \caption{Overview of the results in \cref{thm:global-introduction,thm:local-folded-introduction}. The two tables show whether any host class~$\calH$ with a sparsity restriction (having none (any), having bounded chromatic number ($\chi$) or bounded maximum average degree ($\mad$), being $M$-minor free for some~$M$, or having bounded treewidth ($\tw$)) is $(\icn{x}{\calG}{},\cn{x}{\calG}{})$-bounded for every guest class~$\calG$ with a sparsity restriction and a closure restriction (having none (any), being component-closed (cc), being hereditary or monotone). Numbers~$\langle X \rangle$ refer to the corresponding Theorem~$X$ in the paper, while all unnumbered results are immediate consequences.}
    \label{tab:overview}
\end{table}

Yet, in some applications we only consider covers where each piece corresponds to an \emph{induced} subgraph of~$H$. 
This is for example the case for strong edge colorings\footnote{also called \emph{distance-$2$-edge-colorings}} (cover with induced matchings) \cite{faudree1989inducedmatchings}, as well as induced and weak induced arboricity (cover with induced forests) \cite{axenovich2019induced}.
We thus extend the covering number framework by introducing four induced covering numbers~$\icn{g}{\calG}{H}$, $\icn{u}{\calG}{H}$, $\icn{l}{\calG}{H}$ and~$\icn{f}{\calG}{H}$ (cf. \cref{subparagraph:induced_covering_numbers} for a formal definition). 
Each induced covering number is more restrictive than its non-induced counterpart.
\begin{observation}
    For every $\lab{x} \in \set{\lab{g}, \lab{u}, \lab{l}, \lab{f}}$, every graph class~$\calG$ and every graph~$H$, we have 
    \[\cn{x}{\calG}{H} \leq \icn{x}{\calG}{H}.\]
\end{observation}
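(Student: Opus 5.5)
The plan is to argue directly from the definitions in \cref{subparagraph:covering_numbers,subparagraph:induced_covering_numbers}: every induced $\calG$-cover of~$H$ is in particular a $\calG$-cover of~$H$. Each covering number is the minimum of some cost over a set of admissible covers, so enlarging the admissible set can only decrease the minimum.

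Concretely, for each $\lab{x} \in \set{\lab{g}, \lab{u}, \lab{l}, \lab{f}}$ I would recall that $\cn{x}{\calG}{H}$ is defined as a minimum over covers of~$H$ by guests, where the shape of a guest depends on $\lab{x}$:
\begin{itemize}
\item for $\lab{g}$, a guest is a single graph of~$\calG$;
\item for $\lab{u}$, it is a disjoint union of graphs of~$\calG$;
\item for $\lab{l}$ and $\lab{f}$, it is an arbitrary graph with a measure counting local (respectively folded) multiplicities.
\end{itemize}
In each case a guest comes with a homomorphism or injection $\varphi$ into~$H$, and the cost is a quantity computed from the guests and the maps~$\varphi$ only. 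The induced variant $\icn{x}{\calG}{H}$ takes the same minimum of the same cost, with the additional requirement that the images of the relevant pieces are induced subgraphs of~$H$.

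The key step is therefore to check, case by case, two things. First, the induced condition is a pure restriction, i.e.\ an extra constraint on an otherwise identical cover. Second, the cost function is literally the same in both definitions. Given both, any optimal induced cover witnesses $\cn{x}{\calG}{H} \leq \icn{x}{\calG}{H}$. If no induced cover exists, the induced number is~$\infty$ by convention and the inequality holds trivially.

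The only potential obstacle is the folded number. There, guests are obtained by identifying non-adjacent vertices, so I would verify that the induced requirement is still imposed on the same folding maps and does not change how the cost (the number of preimages per vertex) is computed. Since the paper defines $\icn{f}{\calG}{}$ precisely as the restriction of $\cn{f}{\calG}{}$, this check should be immediate.
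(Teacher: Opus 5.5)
Your proposal is correct and follows the same reasoning the paper relies on; the paper gives no proof and only remarks that each induced covering number is more restrictive than its non-induced counterpart. Indeed, every $k$-global-induced, $k$-local-induced or $k$-folded-induced $\calG$-cover (and, for $\lab{u}$, every weak induced $k$-global $\overline{\calG}$-cover) is by definition also a $k$-global, $k$-local or $k$-folded cover, so minimizing over this smaller admissible set cannot give a smaller value. One small inaccuracy in your recap: for $\lab{l}$ and $\lab{f}$ the guests are still graphs of $\calG$, not arbitrary graphs, but this does not affect the argument.
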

In general, the induced covering number~$\icn{x}{\calG}{}$ cannot be bounded in terms of~$\cn{x}{\calG}{}$ for a host class~$\calH$.
Consider for example the graph class~$\calH$ consisting of all complete graphs and the class~$\calG$ consisting of~$K_2$ and graphs obtained from complete graphs by deleting one edge.
While we can cover all edges of~$K_n$ with two subgraphs~$K_n-K_2$ and $K_2$ in $\calG$, the only graph of~$\calG$ that is also an induced subgraph of~$K_n$ is the graph~$K_2$ on a single edge. 
Hence, the minimum number~$\cn{g}{\calG}{K_n}$ of graphs of~$\calG$ that cover all edges of~$K_n$ is~$2$ for every $n \geq 3$;
yet, if we require the subgraphs to be induced, the minimum number~$\icn{g}{\calG}{K_n}$ of such subgraphs is $\abs{E(K_n)}$.

\subparagraph*{Our results.}
Within this work we give restrictions on~$\calG$ and $\calH$ under which there exists a function~$f\colon \N \to \N$ such that for every graph~$H \in \calH$, we have $\icn{x}{\calG}{H} \leq f(\cn{x}{\calG}{H})$. 
In that case, we say that~$\calH$ is \emph{$(\icn{x}{\calG}{},\cn{x}{\calG}{})$-bounded}.
Otherwise, we say that $\icn{x}{\calG}{}$ and $\cn{x}{\calG}{}$ can be \emph{separated} for the graph class~$\calH$.
We consider two kinds of restrictions, the same as have been considered in \cite{goetze2025boundednessseparationgraphcovering} with regard to $(\cn{u}{\calG}{},\cn{l}{\calG}{})$-, and $(\cn{l}{\calG}{},\cn{f}{\calG}{})$-boundedness:
\begin{enumerate}
\item \emph{Sparsity restrictions} limit the edge density of a graph class~$\calQ$. 
We consider graph classes that have bounded \emph{treewidth}, are \emph{$M$-minor-free} for some graph~$M$, have bounded \emph{maximum average degree}, have bounded chromatic number or no such restriction. 
\item \emph{Closure restrictions} ensure that certain subgraphs of each~$G \in \calQ$ also lie in the class~$\calQ$. We consider graph classes that are \emph{monotone}, \emph{hereditary}, \emph{component-closed} or have no such restriction. 
\end{enumerate}
See \cref{subparagraph:parameters} for definitions.
Both the sparsity and the closure restrictions are less and less restrictive in the order given above. 
That is, any graph class that has bounded maximum average degree has also bounded chromatic number (but not vice versa), and every monotone graph class is also component-closed.
The aim of this work is to determine the least restrictive properties of graph classes~$\calG$ and~$\calH$ such that~$\calH$ is $(\icn{x}{\calG}{},\cn{x}{\calG}{})$-bounded (independent of the choice of~$\calG$ and~$\calH$).
See \cref{tab:overview} for an overview of our results.

\begin{theorem}[global]
    \label{thm:global-introduction}{\ \\}
    For any two graph classes $\calG,\calH$ we have that $\calH$ is $(\icn{g}{\calG}{},\cn{g}{\calG}{})$-bounded, provided the following holds:
    \begin{enumerate}
        \item $\calG$ is hereditary and $\calH$ is $M$-minor-free for some graph~$M$.
    \end{enumerate}
    On the other hand, there exist two graph classes $\calG,\calH$ such that $\calH$ is \emph{not} $(\icn{g}{\calG}{},\cn{g}{\calG}{})$-bounded in each of the following cases:
    \begin{enumerate}
        \item $\calG$ is monotone and both~$\calG$ and~$\calH$ have bounded maximum average degree.
        \item $\calG$ is component-closed and $\calH$ has bounded treewidth.
    \end{enumerate}    
\end{theorem}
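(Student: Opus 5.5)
\textbf{Upper bound ($\calG$ hereditary, $\calH$ $M$-minor-free).} The plan is to take an optimal non-induced global cover and split each piece into a bounded number of induced pieces. Let $H \in \calH$ with $\cn{g}{\calG}{H}=k$, witnessed by subgraphs $G_1,\dots,G_k \subseteq H$ isomorphic to graphs in~$\calG$. Since $\calG$ is hereditary, for every $S \subseteq V(G_i)$ the graph $G_i[S]$ lies in~$\calG$. It is an induced subgraph of $H$ exactly when $S$ is independent in $F_i := H - E(G_i)$, restricted to $V(G_i)$. So it suffices to show the following: for a fixed constant $c=c(M)$, the edges of $G_i$ can be covered by at most $c$ sets $S$ that are independent in~$F_i$. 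This yields $\icn{g}{\calG}{H} \le c\cdot k$.

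To get such sets, I would use that $M$-minor-free graphs have bounded expansion, and in particular bounded weak $2$-colouring number $w=w(M)$. Fix an order of $V(H)$ witnessing this. Colour $V(H)$ with $p(w)$ colours so that any two vertices weakly $2$-reachable from a common vertex get distinct colours; this is a standard distance-type colouring, which exists because every vertex weakly $2$-reaches at most $w$ vertices. For an edge $uv \in E(G_i)$, assign to it the colour pair $(\varphi(u),\varphi(v))$ together with bounded extra information about the $F_i$-neighbours of $u$ and $v$ that come earlier in the order. For each colour class of $G_i$-edges, take $S$ to be the union of their endpoints. Suppose an $F_i$-edge $xy$ lies inside such an $S$. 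Then, going through the order, one of $x,y$ is weakly $2$-reachable from an endpoint of an edge in the same class, and the stored information forbids this. \textbf{Main obstacle.} This step is where I expect the real difficulty. The naive conflict graph on $E(G_i)$ (two edges conflict if an $F_i$-edge joins their endpoints) has unbounded degree, so the bounded-expansion colouring must genuinely exploit the orientation and the choice of representative. If this direct approach fails, my fallback is a low tree-depth colouring with $p$ colours. The union of any two colour classes induces a graph of bounded tree-depth, in which the cover can be built explicitly by recursion along the elimination forest, and then one sums over the $\binom{p}{2}$ pairs.

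\textbf{Separations.} For case (i), take $\calG$ to be the class of forests, which is monotone with $\mad<2$. Then $\cn{g}{\calG}{}$ is the arboricity and $\icn{g}{\calG}{}$ is the induced arboricity. I would let $\calH$ be the graphs of arboricity at most some constant $a$, so $\mad \le 2a$. I would then invoke (or re-prove) the known construction of Axenovich et al.\ of graphs with bounded arboricity but unbounded induced arboricity. A natural candidate is a suitable subdivision-and-blow-up of a complete graph, for which any bounded family of induced forests must leave some short cycle uncovered, by a Ramsey argument. For case (ii), I would construct $\calH$ of treewidth at most $2$ or $3$ from a bounded number of apex vertices attached to a long path or matching. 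I would choose a component-closed $\calG$ whose large members, for example stars or fans of prescribed shape, appear in $H_n$ only non-inducedly. Then two or three pieces cover $H_n$ non-inducedly, while every induced copy containing an apex covers only $O(1)$ apex edges. This forces $\icn{g}{\calG}{H_n} = \Omega(n)$. The delicate point is choosing $\calG$ so that it stays component-closed, yet its induced members cannot cheaply re-cover the apex edges. Plain stars fail here, because an independent set of leaves gives a large induced star, so the members of $\calG$ must carry enough rigid structure, such as attached paths, to rule this out.
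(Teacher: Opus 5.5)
\textbf{Upper bound.} Your reduction is sound. It suffices to cover $E(G_i)$ by boundedly many sets $S$ containing no edge of $F_i=H-E(G_i)$, since each $G_i[S]$ is then an induced subgraph of $H$ lying in $\calG$. But the step that produces such sets is never carried out. The ``bounded extra information'' is not specified. The contradiction for an $F_i$-edge inside $S$ is asserted rather than derived. The fallback's ``recursion along the elimination forest'' is equally open.

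The missing idea is the paper's: in a star forest every edge has an endpoint of degree~$1$, so every subgraph without isolated vertices is induced. With this, your own colouring already works and no extra information is needed.
\begin{itemize}
\item Giving all of $\mathrm{WReach}_2[v]$ (including $v$) distinct colours yields a star colouring. It is proper. A bicoloured path $abcd$ would force $b$ to be the order-minimum of $\{a,b,c\}$, since otherwise $a$ and $c$ are both weakly $2$-reachable from the later of them. Symmetrically, $c$ would be the minimum of $\{b,c,d\}$, which is impossible.
\item Let $S$ be the set of endpoints of $G_i$-edges with colour pair $\{\alpha,\beta\}$. Suppose $x,y\in S$ and $xy\in E(H)$. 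Then $xy$ lies in the star forest induced by colours $\alpha,\beta$. One endpoint, say $y$, is a leaf there, so the $G_i$-edge witnessing $y\in S$ must be $yx$.
\end{itemize}
Hence $\icn{g}{\calG}{H}\le\binom{s}{2}\,\cn{g}{\calG}{H}$ with $s$ the number of colours. This is essentially the paper's proof: bounded star chromatic number of $K_k$-minor-free graphs, \cref{lem:forests}\,(iii) on star forests, and \cref{lem:cover_with_small_parts}. Your fallback, specialised to two colour classes, is the same argument, because a colouring in which every two classes induce tree-depth at most~$2$ is precisely a star colouring. The unbounded degree of your conflict graph is irrelevant.

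\textbf{Separations.} Case~1 is fine. For $\calG$ the forests, $\cn{g}{\calG}{}$ and $\icn{g}{\calG}{}$ are arboricity and induced arboricity. The $1$-subdivisions of $K_n$ (no blow-up needed) have arboricity~$2$ but unbounded induced arboricity by the Ramsey argument; the paper runs the identical argument with star forests.

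Case~2, however, is not proved. You list requirements and name the delicate point, but give neither $\calG$ nor $\calH$ nor a lower-bound argument, and that construction is the entire content of the claim. The paper resolves it with a size hierarchy. $\calH$ consists of the fans $F_k$ (a tip $f$ joined to a path on $2k^k$ vertices), and $\calG=\{F_k'\}\cup\{K_2\}$, where $F_k'$ is $F_k$ minus the middle path edge $xy$. Then $F_k'$ and $K_2$ cover $F_k$. In an induced injective cover, every broken fan must send its tip to $f$ by degree. It cannot be $F_k'$ itself, since its image would miss $xy$, and it cannot be larger. If smaller, it covers at most $2k^{k-1}$ of the $2k^k$ edges at $f$, so at least $k$ guests are needed. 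Note that here induced guests cover a $1/k$-fraction of the tip edges, not $O(1)$, and the lower bound is $k$, not linear in $|V(F_k)|$.

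Your anchoring intuition can also be completed. In host and guest, hang a pendant leaf at both path ends, and in the guest delete a middle tip edge instead of a path edge; take $\calG=\{G_m : m\ge 6\}\cup\{K_2\}$. Any injective copy of $G_m$ in $H_n$ maps tip to tip and the path onto a subpath $v_a\dots v_{a+m-1}$. A pendant leaf landing on $v_{a-1}$ or $v_{a+m}$ would make the image miss an edge at the tip. So $m=n$, the copy misses the deleted tip edge, and it is not induced; only copies of $K_2$ remain. Either way, an explicit construction with a verified lower bound is what your proposal lacks.
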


\begin{theorem}[union, local, folded]
    \label{thm:local-folded-introduction}{\ \\}
    For any two graph classes $\calG,\calH$ and $\lab{x} \in \set{\lab{u},\lab{l},\lab{f}}$ we have that $\calH$ is $(\icn{x}{\calG}{},\cn{x}{\calG}{})$-bounded, provided one of the following holds:
    \begin{enumerate}
        \item $\calG$ is hereditary and has bounded maximum average degree.
        \item $\calG$ is hereditary and~$\calH$ has bounded maximum average degree.
    \end{enumerate}
    On the other hand, there exist two graph classes $\calG,\calH$ such that $\calH$ is \emph{not} $(\icn{x}{\calG}{},\cn{x}{\calG}{})$-bounded in each of the following cases:
    \begin{enumerate}
        \item $\calG$ is hereditary.
        \item $\calG$ is component-closed and $\calH$ has bounded treewidth.\label{item:local-folded-4}
    \end{enumerate}    
\end{theorem}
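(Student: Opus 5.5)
The statement collects several positive and negative results, so I would prove it part by part, deferring each item to the corresponding result later in the paper as indicated in \cref{tab:overview}. The positive items are general arguments that turn a (non-induced) $\calG$-cover of $H$ into an induced one. The negative items are explicit pairs $(\calG,\calH)$ for which $\cn{x}{\calG}{}$ stays bounded on $\calH$ while $\icn{x}{\calG}{}$ does not.

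\textbf{Positive parts.} First I would reduce item~1 to item~2. Assume $\calG$ is hereditary with $\mad(\calG)\le d$ and $H$ has a cover in which every vertex lies in at most $k=\cn{l}{\calG}{H}$ guests. Then $\abs{E(H')}\le \tfrac d2\sum_i\abs{V(G_i)\cap V(H')}\le \tfrac{dk}{2}\abs{V(H')}$ for every subgraph $H'$ of $H$, so $\mad(H)\le dk$. Since the local number is at most the union and folded numbers up to the usual relations from~\cite{Knauer2016_3w3c1g}, we may assume $\mad(H)\le m$ for some $m$ bounded in terms of $\cn{x}{\calG}{H}$.

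For item~2, fix a $\calG$-cover $G_1,\dots,G_k$ of $H$ and an orientation of $H$ with out-degree at most $m$. The task is to refine each $G_i$ into boundedly many pieces $G_i[S]$ with $H[S]=G_i[S]$. Heredity of $\calG$ guarantees that every such piece still lies in $\calG$. The edges to avoid inside a part $S$ are the ``extra'' edges of $H[V(G_i)]-E(G_i)$. I would use a bounded colouring that simultaneously \begin{enumerate} \item properly colours $H$, so that the parts are pairs of colour classes, and \item separates, for every vertex, its at most $m$ out-neighbours. \end{enumerate} This colouring has $O(m^2)$ colours by degeneracy of the auxiliary graph. Each guest $G_i$ then splits into induced pieces indexed by the colour data of the endpoints of an edge together with the orientation. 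Every vertex lies in only $f(m)\cdot k$ pieces, which bounds the local and folded numbers. Disjoint, pairwise non-adjacent pieces of one colour pattern give the union bound.

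\textbf{Main obstacle.} I expect the hardest step to be guaranteeing that an extra edge never joins two vertices of the same piece. The colouring only controls edges of $H$ relative to the orientation, and one must check that the pieces remain pairwise non-adjacent so that they combine into induced disjoint unions for $\lab u$. For the global case with $M$-minor-free hosts, the same refinement uses a bounded colouring from minor-closedness, for example bounded $p$-centred or star colourings. There the number of pieces is bounded globally rather than locally, since the number of guests is bounded and the number of colour patterns is bounded.

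\textbf{Negative parts.} Each of these is a construction. For a hereditary $\calG$ (general item~1), I would take $\calH$ to be complete graphs and $\calG$ the hereditary closure of complete multipartite graphs with large parts. These cover $K_n$ with few guests, but their induced subgraphs in $K_n$ are cliques of bounded size, forcing many induced guests at a vertex. For component-closed $\calG$ with bounded-treewidth hosts, I would take hosts such as fans or stars with subdivided edges, and let $\calG$ contain a long path but none of its induced connected subgraphs of intermediate size. For the global monotone case with bounded $\mad$, I would take $\calG$ to be forests of bounded degree and hosts built so that the induced forests are forced to be tiny matchings while few non-induced forests suffice.
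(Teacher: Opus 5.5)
\textbf{Positive part.} Your argument for item~2 rests on a false claim. A colouring that is proper on $H$ and injective on every out-neighbourhood $N^+(v)$ need not exist with $O(m^2)$ colours. Your auxiliary graph adds a clique on $N^+(v)$ for every $v$, and vertices outside a subgraph create edges inside it, so it is not $O(m^2)$-degenerate. Take the $1$-subdivision $K_n'$, which has $\mad(K_n')<4$. In any orientation with out-degree at most $m$, at most $mn$ arcs leave original vertices. Hence at least $\binom{n}{2}-mn$ subdivision vertices point to both of their ends. Your colouring must then properly colour a graph on the $n$ original vertices whose complement has at most $mn$ edges, and this needs at least $n/(1+\sqrt{2mn})\to\infty$ colours.

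What you actually need is a decomposition of $H$ into boundedly many \emph{weak induced} star forests; the paper imports this from \cref{lem:mad_bounded_split_into_weak_induced_star_forests}. Your idea can be repaired in this direction. Drop the separation requirement, label the at most $m$ out-arcs of each vertex by $1,\dots,m$, and take a proper colouring with $m+1$ colours. The arcs with label $j$ whose tails have colour $a$ then form disjoint stars: heads have colour different from $a$, and all leaves have colour $a$, so each star is induced.

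The paper then restricts the given cover to each such forest using heredity (\cref{lem:restrict-cover-to-subgraph}\,(iii), \cref{lem:cover_with_small_parts}). It uses that connected subgraphs of trees are induced (\cref{lem:forests}), which gives $\icn{x}{\calG}{H}\le 2\mad(H)\cdot\cn{x}{\calG}{H}$. This resolves exactly the ``main obstacle'' you leave open.

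Your reduction of item~1 to item~2 is fine for $\lab{u}$ and $\lab{l}$. For $\lab{f}$, however, the inequality you invoke is reversed, since $\cn{f}{\calG}{H}\le\cn{l}{\calG}{H}$. Counting preimages directly in an $\ell$-folded cover gives $\mad(H)\le d\cdot\cn{f}{\calG}{H}$ and fixes this.

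\textbf{Negative parts.} Your construction for hereditary $\calG$ fails. The hereditary closure of a complete $r$-partite graph contains $K_r$ (take one vertex per part).
\begin{itemize}
\item If the number of parts is unbounded, $\calG$ contains every clique and $\icn{x}{\calG}{K_n}=1$.
\item If it is bounded by $r$, then $\chi(\calG)\le r$, so \cref{lem:chi_of_host_bounded_in_chi_of_guest_and_union} gives $n\le r^{\cn{u}{\calG}{K_n}}$. Meanwhile $\icn{u}{\calG}{K_n}\le n$ via matchings, so $f(k)=r^k$ is a binding function.
\end{itemize}
In fact, a Ramsey argument on the colouring of $E(K_n)$ by covering guests shows that complete hosts never separate $\icn{u}{\calG}{}$ from $\cn{u}{\calG}{}$ for hereditary $\calG$. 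The paper instead uses the hosts $H_n$ ($n$ disjoint copies of $K_n$ plus a universal vertex) with $\calG$ the disjoint unions of cliques and stars. There $\cn{g}{\calG}{H_n}\le 2$, but every guest component whose induced image contains the centre covers at most $n$ of its $n^2$ edges.

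For the bounded-treewidth case, stars with subdivided edges are trees. \cref{lem:forests} gives $\icn{x}{\calG}{F}=\cn{x}{\calG}{F}$ for every forest $F$ and $\lab{x}\in\set{\lab{u},\lab{l},\lab{f}}$, so trees can never separate. Guests built from long paths cover at most two edges at the tip per hit, so they do not even keep $\cn{x}{\calG}{}$ bounded on fans. Fans with broken fans as guests (\cref{lem:sep_gul_H_bounded_tw_G_cc}) work for $\lab{u}$ and $\lab{l}$, but not for $\lab{f}$, since $F_k$ has a $3$-folded-induced cover by two broken fans and a $K_2$. This is why the paper needs the wheeled fans of \cref{lem:sep_gulf_H_tw_2_G_cc}. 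There the joined odd cycles (\cref{obs:cover_odd_cycles}), together with degree counting, pin down every folded image of $W_k'$.
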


Let us briefly discuss two consequences of our main results.
Some of our results generalize previous known bounds.
Axenovich, Dörr, Rollin and Ueckerdt bound the \emph{weak induced arboricity} (that is the induced union $\calF$-covering number~$\icn{u}{\calF}{}$ for the class $\calF$ of forests) in terms of the arboricity~$\cn{u}{\calF}{}$. 
They show that $\icn{u}{\calF}{H} \leq 4(\cn{u}{\calF}{H})^2$ for every graph~$H$ \cite[Claim~2]{axenovich2019induced}. 
In fact, \cref{prop:bounded_H_any_G_hereditary_mad} provides a generalization thereof for hereditary guest classes of bounded maximum average degree, which yields the same bound as in \cite[Claim~2]{axenovich2019induced} when covering with forests.

When the induced $\calG$-covering number~$\icn{x}{\calG}{}$ is bounded by a linear function in~$\cn{x}{\calG}{}$ and further~$\cn{x}{\calG}{}$ can be determined in polynomial time, we obtain a constant factor approximation of~$\icn{x}{\calG}{}$.
This is for example the case for covers of planar graphs with matchings. 
For the class~$\calM$ of matchings, the induced covering number~$\icn{g}{\calM}{H}$ of a graph~$H$ corresponds to the \emph{strong chromatic index}, the covering number~$\cn{g}{\calM}{H}$ to the chromatic index.
While~$\cn{g}{\calM}{H}$ can be determined in polynomial time when~$H$ is bipartite \cite{schrijver1998BipEdgeCol}, deciding whether~$\icn{g}{\calM}{G} \leq 6$ is $\NP$-hard even if~$H$ is a planar, bipartite graph of maximum degree~$3$ \cite{hocquardStrongEdgecolouringInduced2013}. 
In fact, \cref{lem:bounded_g_H_minor_G_hereditary} shows that $\cn{g}{\calM}{H}$ yields an $800$-approximation of~$\icn{g}{\calM}{H}$ for every planar, bipartite host graph~$H$ as planar graphs contain no~$K_5$ as a minor.
Yet, note that this merely shows how boundedness can be used for deriving approximation algorithms.
The approach does not provide the best constants.
Indeed, in the case of planar, bipartite hosts, the approximation algorithm can easily be improved to a $4$-approximation (using a slight adaptation of\cite[Theorem~10]{faudree1990strong}): a bipartite graph of maximum degree~$\Delta$ admits a proper $\Delta$-edge-coloring. 
Each color class can be decomposed into up to four induced matchings using a polynomial time algorithm for computing a $4$-vertex-coloring of a planar graph \cite{robertson1996efficiently}.
Similarly, we obtain an $11$-approximation of~$\icn{g}{\calM}{H}$ when~$H$ is planar, bipartite using $\icn{g}{\calM}{H} \leq (2 \mad(H)-1) \cdot \cn{g}{\calM}{H}$ (the bound also holds when~$H$ is not planar) \cite[Theorem~13]{wangStrongChromaticIndex2025}, as planar graphs have maximum average degree less than~$6$. 

\section{Preliminaries}

\subparagraph*{Covering numbers.}
\label{subparagraph:covering_numbers}

A $\calG$-cover of a graph~$H$ is an edge-surjective graph homomorphism~$\varphi\colon G_1 \cupdot \dots \cupdot G_t \to H$ where~$G_i \in \calG$ for all $i \in [t]$. 
We call~$H$ the \emph{host} and the graphs~$G_i$ \emph{guests}.
The cover~$\varphi$ is \emph{injective} if each guest~$G_i$ is isomorphic to its image~$\varphi(G_i) \subseteq H$, that is the subgraph~$\varphi(G_i) \subseteq H$ is a copy of~$G_i$.
A cover $\varphi\colon G_1 \cupdot \dots \cupdot G_t \to H$ is\footnote{This definition deviates from the definitions used in \cite{Knauer2016_3w3c1g}. Knauer and Ueckerdt only define $k$-global, and $k$-local without requiring the cover to be injective. Yet, the resulting covering numbers are the same, as they also only consider injective covers in the case of global and local covering numbers.}
\begin{itemize}
    \item \emph{$k$-global} if~$\varphi$ is injective and $t \leq k$,
    \item \emph{$k$-local} if~$\varphi$ is injective and $\abs{\varphi^{-1}(v)} \leq k$ for all $v \in V(H)$,
    \item \emph{$k$-folded} if $\abs{\varphi^{-1}(v)} \leq k$ for all $v \in V(H)$.
\end{itemize}
That is, a $k$-global $\calG$-cover of a graph~$H$ consists of up to $k$ subgraphs~$G_1, \dots, G_k \in \calG$ of~$H$ whose union covers all edges of~$H$, see \cref{fig:examples_left} for an example.
In a $k$-local cover, the edges of~$H$ may be covered with any number of subgraphs~$G_1, \dots, G_t \in \calG$, yet each vertex is contained in at most~$k$ such subgraphs, see \cref{fig:examples_middle}.
However, while each vertex of~$H$ is contained in at most~$k$ of the graphs~$G_1, \dots, G_t$ of a $k$-folded $\calG$-cover, the graphs~$G_1, \dots, G_t$ need not be subgraphs of~$H$, see \cref{fig:examples_right}.
It is only required that their homomorphic images $\varphi(G_i)$ are subgraphs of $H$.
Note in particular that every $k$-global $\calG$-cover is also $k$-local, and every $k$-local cover is also $k$-folded.
\begin{figure}
    \centering
    \begin{subfigure}[t]{0.37\textwidth}
       \centering
        \includegraphics[page=5]{figures/examples_non-induced.pdf}
        \caption{}
        \label{fig:examples_left}
    \end{subfigure}\hfill
    \begin{subfigure}[t]{0.37\textwidth}
        \centering
        \includegraphics[page=6]{figures/examples_non-induced.pdf}
        \caption{}
        \label{fig:examples_middle}
    \end{subfigure}\hfill
    \begin{subfigure}[t]{0.21\textwidth}
       \centering
        \includegraphics[page=7]{figures/examples_non-induced.pdf}
        \caption{}
        \label{fig:examples_right}
    \end{subfigure}
    \caption{Examples of $\calG$-covers taken from \cite[Figure~1]{goetze2025boundednessseparationgraphcovering}. (\subref{fig:examples_left}) A $7$-global $3$-local $\set{K_3}$-cover of~$K_7$ certifying~$\cn{g}{\set{K_3}}{K_7} \leq 7$ and $\cn{l}{\set{K_3}}{K_7} \leq 3$. (\subref{fig:examples_middle}) A $5$-global $5$-local $\overline{\set{K_3}}$-cover of~$K_7$ certifying $\cn{u}{\set{K_3}}{K_7} \leq 5$. (\subref{fig:examples_right}) A $2$-folded $\Forb(C_4)$-cover of~$K_7$ certifying $\cn{f}{\Forb(C_4)}{K_7} \leq 2$ for the class~$\Forb(C_4)$ consisting of all $C_4$-free graphs.}
    \label{fig:example_non-induced}
\end{figure}

Covering numbers describe how well a graph can be decomposed into smaller pieces of a graph class~$\calG$, or the union-closure~$\overline{\calG}$ of~$\calG$ (the graph class consisting of all vertex-disjoint unions of graphs in~$\calG$).
We are interested in covers with as few pieces as possible.
Depending on how we count the number of pieces, we obtain four different covering numbers as have been defined by Knauer and Ueckerdt in \cite{Knauer2016_3w3c1g}\footnote{In \cite{Knauer2016_3w3c1g} Knauer and Ueckerdt only describe three covering numbers. These have been later on be complemented by the union-covering number \cite{blasius2018local_boxicity,merker2019local_page_numbers}. Our definition of the folded covering number~$\cn{f}{\calG}{}$ differs slightly from the one given in \cite{Knauer2016_3w3c1g}. They only consider folded covers with a single guest graph. Yet, for union-closed graph classes, the two definitions coincide.}:
\begin{itemize}
\item The \emph{global~$\calG$-covering number}~$\cn{g}{\calG}{H}$ of a graph~$H$ is the smallest~$t$ such that there exists (an injective) $t$-global $\calG$-cover $\varphi\colon G_1 \cupdot \dots \cupdot G_t \to H$ of~$H$.
\item The \emph{union~$\calG$-covering number}~$\cn{u}{\calG}{H}$ of a graph~$H$ is the smallest~$t$ such that there exists (an injective) $t$-global $\overline{\calG}$-cover $\varphi\colon G_1 \cupdot \dots \cupdot G_t \to H$ of~$H$.
\item The \emph{local~$\calG$-covering number}~$\cn{l}{\calG}{H}$ of a graph~$H$ is the smallest~$\ell$ such that there exists (an injective) $\ell$-local $\calG$-cover $\varphi\colon G_1 \cupdot \dots \cupdot G_t \to H$ of~$H$.
\item The \emph{folded~$\calG$-covering number}~$\cn{f}{\calG}{H}$ of a graph~$H$ is the smallest~$\ell$ such that there exists a (not necessarily injective) $\ell$-folded $\calG$-cover $\varphi\colon G_1 \cupdot \dots \cupdot G_t \to H$ of~$H$.
\end{itemize}
There is a hierarchy on the four covering numbers: for every graph class~$\calG$ and every graph~$H$ we have 
\[ \cn{f}{\calG}{H} \leq \cn{l}{\calG}{H} \leq \cn{u}{\calG}{H} \leq \cn{g}{\calG}{H}.\]
Yet, the gaps between two covering numbers can be arbitrarily large  \cite{goetze2025boundednessseparationgraphcovering}.

\subparagraph*{Induced covering numbers.}
\label{subparagraph:induced_covering_numbers}

A subgraph~$H'$ of a graph~$H$ is \emph{weak induced} if each component of~$H'$ is an induced subgraph of~$H$.
We say that a $\calG$-cover $\varphi\colon G_1 \cupdot \dots \cupdot G_t \to H$ is \emph{induced} (\emph{weak induced}) if each image~$\varphi(G_i)$ is an induced (weak induced) subgraph of~$H$.
Similarly to (non-induced) covering numbers, we call a cover~$\varphi\colon G_1 \cupdot \dots \cupdot G_t \to H$
\begin{itemize}
    \item \emph{$k$-global-induced} if~$\varphi$ is induced and $k$-global
    \item \emph{$k$-local-induced} if~$\varphi$ is weak induced and $k$-local
    \item \emph{$k$-folded-induced} if~$\varphi$ is weak induced and $k$-folded.
\end{itemize}
See \cref{fig:example-induced} for examples.
\begin{figure}
    \centering
    \begin{subfigure}[t]{0.3\textwidth}
       \centering
        \includegraphics[page=4]{figures/examples_induced.pdf}
        \caption{}
        \label{fig:examples_induced_left}
    \end{subfigure}\hfill
    \begin{subfigure}[t]{0.3\textwidth}
        \centering
        \includegraphics[page=5]{figures/examples_induced.pdf}
        \caption{}
        \label{fig:examples_induced_middle}
    \end{subfigure}\hfill
    \begin{subfigure}[t]{0.37\textwidth}
       \centering
        \includegraphics[page=6]{figures/examples_induced.pdf}
        \caption{}
        \label{fig:examples_induced_right}
    \end{subfigure}
    \caption{Examples of non-induced, weak induced and induced $\calG$-covers of a graph~$H$. (\subref{fig:examples_induced_left}) A non-induced $2$-global $\overline{\set{C_4}}$-cover of~$H$ certifying~$\cn{l}{\set{C_4}}{H} \leq \cn{u}{\set{C_4}}{H} \leq 2$. (\subref{fig:examples_induced_middle}) A weak induced $3$-global $3$-local $\overline{\set{C_4}}$-cover of~$H$ certifying $\icn{l}{\set{C_4}}{H}\leq \icn{u}{\set{C_4}}{H} \leq 3$. (\subref{fig:examples_induced_right}) A $6$-global-induced $\set{C_4}$-cover of~$H$ certifying $\icn{g}{\set{C_4}}{H} \leq 6$.}
    \label{fig:example-induced}
\end{figure}
Considering (weak) induced $\calG$-covers yields a variant of each of the covering numbers above:
\begin{itemize}
\item The \emph{global-induced~$\calG$-covering number}~$\icn{g}{\calG}{H}$ of a graph~$H$ is the smallest~$t$ such that there exists an induced $t$-global $\calG$-cover $\varphi\colon G_1 \cupdot \dots \cupdot G_t \to H$ of~$H$.
\item The \emph{union-induced~$\calG$-covering number}~$\icn{u}{\calG}{H}$ of a graph~$H$ is the smallest~$t$ such that there exists a weak induced, $t$-global $\overline{\calG}$-cover $\varphi\colon G_1 \cupdot \dots \cupdot G_t \to H$ of~$H$.
\item The \emph{local-induced~$\calG$-covering number}~$\icn{l}{\calG}{H}$ of a graph~$H$ is the smallest~$\ell$ such that there exists a weak induced, $\ell$-local $\calG$-cover $\varphi\colon G_1 \cupdot \dots \cupdot G_t \to H$ of~$H$.
\item The \emph{folded-induced~$\calG$-covering number}~$\icn{f}{\calG}{H}$ of a graph~$H$ is the smallest~$\ell$ such that there exists a (not necessarily injective) weak induced, $\ell$-folded $\calG$-cover $\varphi\colon G_1 \cupdot \dots \cupdot G_t \to H$ of~$H$.
\end{itemize}
As for covering numbers, the covers we consider are less and less restrictive. 
That is, we obtain an order on the induced covering numbers:
\[ \icn{f}{\calG}{H} \leq \icn{l}{\calG}{H} \leq \icn{u}{\calG}{H} \leq \icn{g}{\calG}{H}.\]

\subparagraph*{Restrictions on guests and hosts.}
\label{subparagraph:parameters}
A graph class~$\calG$ is \emph{component-closed} if each component of any graph~$G \in \calG$ also lies in~$\calG$.
If all induced subgraphs of every graph~$G \in \calG$ lie in~$\calG$, the class is \emph{hereditary}.
The class~$\calG$ is called \emph{monotone} if~$\calG$ is closed under taking subgraphs.
As many graph parameters are based on covers with such monotone (for example in the case of thickness, that is covers with planar graphs \cite{mutzelThicknessGraphsSurvey1998}) or hereditary (in the case of edge clique number, that is covers with complete graphs \cite{brighamUpperBoundsEdge1984a}) graph classes, we study guest classes which fulfill these properties with respect to covering numbers.

In general, a graph~$H$ on $n$~vertices can have~$\Omega(n^2)$~edges and thus cannot be decomposed into few forests (each forest contains at most~$n-1$ edges).
Yet, if~$H$ is sparse, that is if~$H$ contains few edges with respect to~$n$, this is often possible, even if we require the forests to be (weak) induced \cite{axenovich2018kstronginducedarboricity} (cf. \cref{lem:mad_bounded_split_into_weak_induced_star_forests}).
For a graph parameter~$p$ we denote by~$p(\calH) = \sup_{H \in \calH} p(H)$ the maximum possible value of~$p(H)$ on~$\calH$. 
If~$p(\calH) < \infty$, we say that~$p$ is bounded on~$\calH$.
Within this work, we consider four different notions of sparsity of a graph class~$\calH$,
\begin{itemize}
    \item the \emph{chromatic number} $\chi(\calH)$ is bounded,
    \item the \emph{maximum average degree}~$\mad(\calH)$ is bounded,
    \item $\calH$ is \emph{$M$-minor-free} for some graph~$M$,
    \item the \emph{treewidth}~$\tw(\calH)$ is bounded.
\end{itemize}
In fact, each of these properties implies all of the properties above it.

The \emph{chromatic number}~$\chi(G)$ of a graph~$G$ is the smallest~$k \in \N$ for which~$G$ admits a $k$-vertex-coloring where no two adjacent vertices receive the same color.
The \emph{maximum average degree}~$\mad(G)$ of~$G$ is defined via the relation between the number of edges and vertices within its subgraphs:
\[
\mad(G) = \max \set{\frac{2\abs{E(G')}}{\abs{V(G')}} \given \text{$G'$ is a subgraph of~$G$ with~$\abs{V(G')} \geq 1$}}
\]
A graph~$G$ is \emph{$M$-minor-free} for some graph~$M$ if it does not contain~$M$ as a minor.
We define treewidth via \emph{$k$-trees}. 
For an integer~$k$, a clique on~$k+1$ vertices is a $k$-tree. 
The same holds for any graph~$H$ obtained from a $k$-tree~$H'$ by adding a vertex~$x$ and connecting~$x$ to all vertices of some $k$-clique in~$H'$. 
The minimum~$k$ such that a graph~$G$ is a subgraph of a $k$-tree~$H$ is the \emph{treewidth} of~$G$.

We consider the same structural restrictions as in \cite{goetze2025boundednessseparationgraphcovering}.
That is, the guest class may have no further restrictions, be component-closed, hereditary or monotone, as well as have no further restriction with respect to sparsity, have bounded chromatic number, or bounded maximum average degree.
For the host class~$\calH$, we only study sparsity restrictions: $\calH$ may have no restriction, have bounded chromatic number, bounded maximum average degree, be $M$-minor-free for some graph~$M$ or have bounded treewidth. 
See \cref{tab:overview} for an overview of the different results.
Note that boundedness-results in one setting may imply such results in other settings, and the same holds for separation. 
For example, if we show $(\icn{l}{\calG}{},\cn{l}{\calG}{})$-boundedness for every hereditary guest class~$\calG$ of bounded maximum average degree and every host class~$\calH$, we also obtain $(\icn{l}{\calG'}{},\cn{l}{\calG'}{})$-boundedness for monotone guest classes~$\calG'$ with $\mad(\calG') < \infty$ and any host class~$\calH'$ of bounded treewidth.

\section{General guest classes and hosts of bounded treewidth}
\label{sec:general_guests_tw}

If there are no further restrictions on the guest class~$\calG$, the induced and non-induced covering numbers can be arbitrarily far apart, even for very well-structured hosts. 
This corresponds to the results of the first two columns of \cref{fig:global,fig:union-local-folded}.
We first show that host classes of bounded treewidth may not be $(\icn{x}{\calG}{},\cn{x}{\calG}{})$-bounded for all~$\lab{x}\neq \lab{f}$ (cf. \cref{lem:sep_gul_H_bounded_tw_G_cc}), and then generalize this result to folded-induced covering numbers (cf. \cref{lem:sep_gulf_H_tw_2_G_cc}).

To show that an induced covering number~$\icn{x}{\calG}{}$ cannot be bounded in terms of the corresponding covering number~$\cn{x}{\calG}{}$, we generally choose the guest class~$\calG$ such that each host~$H \in \calH$ can be covered with few guests, yet~$H$ only contains small graphs of~$\calG$ as induced subgraphs.
In fact, there are only very few options to embed a (large) guest~$G \in \calG$ with a vertex of high degree in~$H$, if~$H$ only contains few such vertices.

\begin{observation}
\label{lem:degree}
    Let~$\varphi\colon G_1 \cupdot \dots \cupdot G_t \to H$ be a $\calG$-cover of a graph~$H$.
    \begin{enumerate}[(i)]
        \item\label{itm:degree_injective} If $\varphi$ is injective, then $\deg_{G_i}(u) \leq \deg_H(\varphi(u))$ for every vertex~$u \in V(G_i)$.
        \item\label{itm:degree_folded} If $\varphi$ is $\ell$-folded, then $\deg_{G_i}(u) \leq \ell \cdot \deg_H(\varphi(u))$
        for every vertex~$u \in V(G_i)$.
    \end{enumerate}
\end{observation}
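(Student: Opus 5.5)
Both parts follow by looking at how the edges at a vertex $u \in V(G_i)$ are mapped by $\varphi$. Fix $u \in V(G_i)$ and let $v = \varphi(u)$. Since $\varphi$ is a graph homomorphism, every edge $uw \in E(G_i)$ is sent to an edge $\varphi(u)\varphi(w) = v\varphi(w)$ of~$H$. So the neighbours of~$u$ in~$G_i$ are mapped into $N_H(v)$, and the question is how many neighbours of~$u$ can land on the same vertex of $N_H(v)$.

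For \eqref{itm:degree_injective}, injectivity means that $G_i$ is isomorphic to its image $\varphi(G_i) \subseteq H$ via~$\varphi$. In particular, $\varphi$ restricted to $V(G_i)$ is injective. Hence distinct neighbours $w \neq w'$ of~$u$ have distinct images $\varphi(w) \neq \varphi(w')$, all lying in $N_H(v)$. This gives $\deg_{G_i}(u) \leq \abs{N_H(v)} = \deg_H(v)$.

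For \eqref{itm:degree_folded}, the map need not be injective on $V(G_i)$, so we count preimages instead. For each $x \in N_H(v)$, the neighbours $w$ of~$u$ with $\varphi(w) = x$ all lie in $\varphi^{-1}(x)$. Since $\varphi$ is $\ell$-folded, $\abs{\varphi^{-1}(x)} \leq \ell$. Summing over $x \in N_H(v)$ then yields $\deg_{G_i}(u) \leq \sum_{x \in N_H(v)} \abs{\varphi^{-1}(x)} \leq \ell \cdot \deg_H(v)$.

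The only subtlety is in~\eqref{itm:degree_folded}. Here $\varphi^{-1}(x)$ is taken over the whole disjoint union $G_1 \cupdot \dots \cupdot G_t$, which only makes the bound more generous. We also use that loops cannot occur, since $G_i$ is simple and $u \neq w$; indeed, $\varphi(w)$ is always an actual neighbour of~$v$, because it is an endpoint of an edge at~$v$ in the simple graph~$H$. Apart from this the argument is routine, and there is no substantial obstacle.
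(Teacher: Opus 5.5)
Your argument is correct and is the intended reasoning; the paper states this observation without proof. In (i) the neighbours of $u$ are mapped injectively into $N_H(\varphi(u))$, and in (ii) each vertex of $N_H(\varphi(u))$ has at most $\ell$ preimages, so it receives at most $\ell$ of them. One small point: $\varphi(w)\neq\varphi(u)$ holds because $\varphi$ maps edges to edges of the loopless graph $H$, not because $G_i$ is simple, though you give this correct reason in the same sentence.
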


The construction which shows that the global-, union-, and local-induced covering numbers cannot be bounded in terms of their non-induced counterparts evolves around graphs known as \emph{fans}.
Consider two copies of a path~$P_{k^k}$ on $k^k$~vertices and let~$x,y$ be one endpoint of the first and of the second copy respectively. 
We call the graph~$F_k'$ that is obtained from the two copies~$P_{k^k}$ by adding a universal vertex~$f$ (i.e., a vertex adjacent to every vertex of the two paths) a \emph{broken fan}; see \cref{fig:fans} for an illustration. 
The \emph{fan}~$F_k$ is the supergraph of~$F_k'$ where the edge~$xy$ is added.
\begin{figure}
    \centering
    \begin{subfigure}[t]{.4\linewidth}
    \centering
        \includegraphics[page=2]{figures/fans.pdf}
        \subcaption{}
        \label{fig:broken_fan}
    \end{subfigure}\hfil
    \begin{subfigure}[t]{.4\linewidth}
    \centering
        \includegraphics[page=1]{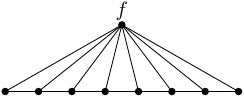}
        \subcaption{}
        \label{fig:fan}
    \end{subfigure}
    \caption{(\subref{fig:broken_fan}) A broken fan~$F_k'$  and (\subref{fig:fan}) a fan~$F_k$ for $k=2$.}
    \label{fig:fans}
\end{figure}
We call~$f$ (respectively~$f'$) the \emph{tip} of~$F_k$ (respectively $F_k'$) and the path~$P_{2k^k}$ (respectively $P_{k^k} \cupdot P_{k^k}$) its underlying \emph{(broken) path}.

\begin{proposition}
\label{lem:sep_gul_H_bounded_tw_G_cc}
    There is a host class~$\calH$ with $\tw(\calH) = 2$ and a component-closed guest class~$\calG$ such that for every $\lab{x} \in \set{\lab{g}, \lab{u},\lab{l}}$, the host class~$\calH$ is not $(\icn{x}{\calG}{},\cn{x}{\calG}{})$-bounded.
\end{proposition}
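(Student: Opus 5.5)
We take the host class $\calH = \set{F_k \given k \in \N}$ of fans and the guest class $\calG = \set{F_k' \given k \in \N} \cup \set{K_2}$, which is component-closed since broken fans and $K_2$ are connected. A fan is a path plus a universal vertex, hence a $2$-tree, so $\tw(\calH) = 2$. The plan is to show that every fan has small non-induced covering numbers (at most~$2$ for $\lab{g}$), while every (weak) induced cover of $F_k$ is forced to use only copies of $K_2$ around the tip. This makes the induced covering numbers unbounded, and by the hierarchy it suffices to bound $\cn{g}{\calG}{}$ from above and $\icn{l}{\calG}{}$ from below.

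\emph{Upper bound.} The broken fan $F_k'$ is a spanning subgraph of $F_k$ that misses exactly the edge $xy$. Hence the two guests $F_k'$ and $K_2$ (the latter mapped onto $xy$) form an injective $2$-global $\calG$-cover, giving $\cn{l}{\calG}{F_k} \leq \cn{u}{\calG}{F_k} \leq \cn{g}{\calG}{F_k} \leq 2$.

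\emph{Lower bound.} Consider a weak induced local $\calG$-cover of $F_k$. Every component of an image is an induced subgraph of $F_k$, and each guest is connected, so each image $\varphi(G_i)$ is itself an induced subgraph of $F_k$ isomorphic to $G_i$. The key step is to show that $F_k$ contains no induced copy of $F_j'$ for any $j \geq 1$.

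\begin{itemize}
    \item Suppose $F_j'$ appears as an induced subgraph. Its tip has degree $2j^j \geq 2$, while its two paths contain only vertices of degree at most~$3$ in $F_j'$. Since $F_j'$ has $2j^j+1$ vertices, we may deal with small $j$ separately and otherwise use \cref{lem:degree}\ref{itm:degree_injective}: the only vertex of $F_k$ of degree greater than~$3$ is $f$, so the tip of $F_j'$ must map to $f$.
    \item The rest of $F_j'$ then maps into the path $P_{2k^k}$ and induces there the disjoint union $P_{j^j} \cupdot P_{j^j}$. Because the subgraph is induced, the chosen vertex set must induce exactly two subpaths, so this is possible.
\end{itemize}

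This second point breaks the naive argument: a broken fan \emph{does} embed inducedly, by choosing two non-adjacent subpaths. The construction must therefore be adjusted so that the only large guest is a fan whose path is broken at a specific place that cannot be realised, or the counting must be tightened.

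\emph{Adjusted construction.} We change the guests to sizes tied to $k$ and use the $k^k$ length. Take $\calG = \set{F_k' \given k\in\N}\cup\set{K_2}$ exactly as before, but note that an induced broken fan in $F_k$ with tip $f$ uses at most $2j^j \leq 2k^k$ path vertices. Each such guest covers at most $2j^j$ of the spoke edges at $f$. Any $\ell$-local induced cover contains at most $\ell$ guests at $f$, and these must cover all $2k^k$ spokes as well as all path edges. A path vertex adjacent to a gap between the two subpaths of a broken fan is left with a path edge that has to be covered by another guest. I would try to show that covering every path edge with induced broken fans needs many guests through $f$: each induced broken fan with tip $f$ leaves at least one path edge uncovered, namely the gap between its two subpaths. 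Such an edge $uv$ must be covered either by a $K_2$ or by another fan, and every fan passes through $f$. Combining $2k^k$ spokes with only $\ell$ fans at $f$ and these forced gaps, a careful count should give $\ell \to \infty$. This counting is the step I expect to be the main obstacle, and it may require modifying the guests, for instance by adding pendant structure that pins the break position, so that induced embeddings can only use the $K_2$ guests.

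If the count works out, it yields $\icn{l}{\calG}{F_k} \to \infty$ while $\cn{g}{\calG}{F_k} \leq 2$. By the orders $\icn{l}{\calG}{} \leq \icn{u}{\calG}{} \leq \icn{g}{\calG}{}$ and $\cn{l}{\calG}{} \leq \cn{u}{\calG}{} \leq \cn{g}{\calG}{}$, this would settle all three cases $\lab{x} \in \set{\lab{g},\lab{u},\lab{l}}$.
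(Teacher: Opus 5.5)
\textbf{There is a genuine gap: the lower bound on the induced covering number is never proved.}

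Your setup matches the paper: fans as hosts, broken fans plus $K_2$ as guests, the $2$-global cover $F_k' \cupdot K_2 \to F_k$, and the reduction via the two hierarchies to showing that $\icn{l}{\calG}{F_k}$ is unbounded. You are also right that the naive claim fails, since smaller broken fans $F_j'$ do embed as induced subgraphs of $F_k$. But the proposal stops at the lower bound, which is the substance of the proof. You bound the number of edges at the tip $f$ covered by an induced $F_j'$ by $2j^j \le 2k^k$, which gives nothing. You then defer to a ``gap'' count that is never carried out. That direction cannot work on its own anyway: the path edges left in the gaps can be covered by $K_2$-guests avoiding $f$, with each path vertex in at most two of them. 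So the gaps force nothing at $f$. As written, you have no proof that $\icn{l}{\calG}{F_k} \to \infty$.

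\textbf{The missing idea} is to exploit the super-exponential length $k^k$; the guests need no modification.
\begin{enumerate}
\item In an $\ell$-local-induced cover each guest is mapped injectively. So a guest $F_j'$ has $2j^j+1 \le 2k^k+1$ vertices, i.e.\ $j \le k$.
\item The case $j=k$ is impossible. Then $\varphi(F_k')$ uses all $2k^k+1$ vertices of $F_k$ but has one edge fewer, since it misses $xy$. It is therefore a spanning, non-induced subgraph. The paper phrases this via \cref{lem:degree}\,\eqref{itm:degree_injective}: the tip must map to $f$, and the broken path must map onto the path.
\item Hence every guest is $K_2$ or some $F_j'$ with $j \le k-1$. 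At most one of its vertices maps to $f$, so it covers at most $2j^j \le 2(k-1)^{k-1} \le 2k^{k-1}$ of the $2k^k$ edges at $f$.
\item Consequently at least $2k^k/(2k^{k-1}) = k$ guests contain $f$, which gives $\ell \ge k$, while $\cn{g}{\calG}{F_k} \le 2$.
\end{enumerate}
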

\begin{proof}
    The guest class~$\calG = \set{F_k' \given k \geq 2} \cup \set{K_2}$ consists of all broken fans and the host class~$\calH = \set{F_k \given k \geq 2}$ of all fans.
    As each~$G \in \calG$ is connected, the class~$\calG$ is component-closed.
    Note that~$\calH$ has treewidth~$2$.
    We need to show that for every $k \in \N$, we have
    \begin{enumerate}[(i)]
        \item\label{itm:wheel_cn} $\cn{l}{\calG}{F_k} \leq \cn{u}{\calG}{F_k} \leq \cn{g}{\calG}{F_k} \leq 2$
        \item\label{itm:wheel_icn} $\icn{g}{\calG}{F_k} \geq \icn{u}{\calG}{F_k} \geq \icn{l}{\calG}{F_k} \geq k$.
    \end{enumerate}
    It then follows that $\icn{x}{\calG}{}$ cannot be bounded in terms of $\cn{x}{\calG}{}$ for the host class~$\calH$ for any~$\lab{x} \neq \lab{f}$.

    To prove \eqref{itm:wheel_cn}, observe that $F_{k}'$ covers all but one edge (namely the edge~$xy$) of~$F_k$. 
    Thus, $\varphi\colon F_k' \cupdot K_2 \to F_k$ is a $2$-global $\calG$-cover certifying $\cn{g}{\calG}{F_k} \leq 2$.

    \smallskip

    To prove \eqref{itm:wheel_icn}, consider an injective $\ell$-local-induced $\calG$-cover $\varphi\colon G_1 \cupdot \dots \cupdot G_t \to F_k$.
    
    We show that~$\ell \geq k$ or each graph~$G_i$ covers at most $2k^{k-1}$ edges incident to the tip~$f$ of~$F_k$.
    Claim \eqref{itm:wheel_icn} then follows, as the tip~$f$ has to be hit at least~$\frac{2k^k}{2k^{k-1}} = k$ times to cover all edges incident to~$f$.
    
    The claim clearly holds for guests~$G_i$ which correspond to~$K_2$.
    We may thus assume that~$G_i$ is a broken fan~$F_q'$.
    As $\varphi$ is injective, $F_q'$ is a subgraph of~$F_k$. 
    In particular, we have~$q \leq k$. 
    Let~$f'$ denote the tip of~$F_q'$. 
    As every tip has degree at least~$8$, and all other vertices in a fan have degree at most~$3$, \cref{lem:degree}\,\eqref{itm:degree_injective} yields $\varphi(f') = f$. 
    Yet, if~$q < k$, we have~$\abs{V(F_q' - f')} \leq 2k^{k-1}$, i.e. $F_q'$ covers at most $2k^{k-1}$ edges incident to~$f$.
    If~$q=k$, the underlying broken path of~$F_ k'$ is mapped to the path of~$F_k$ as $\varphi(N(f')) \subseteq N(f)$.
    Thus, $\varphi(F_k')$ corresponds to a copy of~$F_k'$ in~$F_k$; a contradiction to $\varphi(F_k')$ being an induced subgraph. 
\end{proof}

However, \cref{lem:sep_gul_H_bounded_tw_G_cc} cannot be easily extended to folded covers. 
Indeed, every fan~$F_k$ admits a $3$-folded-induced cover with two copies of~$F_k'$ and a single edge~$K_2$, see \cref{fig:3-folded-induced-broken-fan-cover} for an illustration.
\begin{figure}
    \centering
    \includegraphics[page=3]{figures/folded_fans.pdf}
    \caption{A $3$-folded-induced cover~$\varphi$ of~$F_k$ formed by two broken fans~$F_k'$ (\textcolor{signalblue4}{$\blacksquare$} and \textcolor{maygreen3}{$\blacksquare$}) and one~$K_2$ (\textcolor{signalblue2}{$\blacksquare$}) for $k=2$.}
    \label{fig:3-folded-induced-broken-fan-cover}
\end{figure}

While we do not have $(\icn{x}{\calG}{},\cn{x}{\calG}{})$-boundedness for all host classes of bounded treewidth, for forests (i.e. for graphs~$F$ with $\tw(F) \leq 1$) we do.

\begin{lemma}
    \label{lem:forests}
    For every graph class~$\calG$, each of the following holds.
    \begin{enumerate}[(i)]
        \item For every forest $F$ and $\lab{x} \in \set{\lab{u}, \lab{l}, \lab{f}}$, we have $\icn{x}{\calG}{F} = \cn{x}{\calG}{F}$.\label{item:ulf-forest} 
        \item If~$\calG$ is hereditary and~$F$ is a forest, we have $\icn{g}{\calG}{F} \leq 2\cn{g}{\calG}{F}$.\label{item:g-forest-G_her}
        \item\label{itm:forest_comp_closed} If~$\calG$ is component-closed and~$F$ is a star forest, we have $\icn{g}{\calG}{F} = \cn{g}{\calG}{F}$.\label{item:g-star-G_her}
    \end{enumerate}
\end{lemma}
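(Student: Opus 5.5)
For \eqref{item:ulf-forest} I would show that every $\calG$-cover of a forest is automatically weak induced. Then the optimal covers for $\cn{x}{\calG}{F}$ are admissible for $\icn{x}{\calG}{F}$, and the reverse inequality is the general observation $\cn{x}{\calG}{F}\le\icn{x}{\calG}{F}$.

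The key fact is that a connected subgraph $T$ of a forest $F$ is an induced subgraph. Indeed, $T$ is a tree, and an edge of $F$ joining two vertices of $T$ but missing from $T$ would close a cycle together with the path in $T$. Now take any (possibly non-injective) $\ell$-folded or $t$-global $\overline{\calG}$- or $\calG$-cover $\varphi$. Each component of $\varphi(G_i)$ is a connected subgraph of $F$, hence induced. So $\varphi$ is weak induced, and the same cover witnesses the same value for $\icn{x}{\calG}{F}$ with $\lab{x}\in\set{\lab{u},\lab{l},\lab{f}}$.

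For \eqref{item:g-forest-G_her} I would start from an injective $t$-global $\calG$-cover $\varphi$ with $t=\cn{g}{\calG}{F}$ and split every guest into two. Fix $i$ and let $T_1,\dots,T_m$ be the components of $\varphi(G_i)$; by the fact above each is induced in $F$. Contract each $T_j$ to a single vertex and delete the remaining vertices of $F$. The resulting simple graph $Q_i$ has vertex set $\set{T_1,\dots,T_m}$ and joins $T_j,T_{j'}$ whenever some edge of $F$ runs between them. Since the $T_j$ are disjoint connected sets, $Q_i$ is a minor of $F$ and hence a forest. Therefore $Q_i$ has a proper $2$-coloring with classes $A_i,B_i$.

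Let $G_i^A$ be the subgraph of $G_i$ induced by $\varphi^{-1}\bigl(\bigcup_{T\in A_i}V(T)\bigr)$, and define $G_i^B$ analogously. Each of these is an induced subgraph of $G_i$, since it is a union of components of $G_i$, so it lies in $\calG$ by heredity. Its image is an induced subgraph of $F$: each $T_j$ is induced, and no edge of $F$ joins two trees of the same color class. Together the $G_i^A$ and $G_i^B$ cover every edge covered by $G_i$, which gives an induced $2t$-global cover.

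For \eqref{itm:forest_comp_closed}, let $F$ be a star forest and $\varphi$ an optimal injective $t$-global $\calG$-cover. Every component of $G_i$ with an edge maps to a substar $K_{1,a}$ whose center is the center $c$ of some star of $F$. The only way $\varphi(G_i)$ can fail to be induced is the following situation: a vertex $v$ of $\varphi(G_i)$ is adjacent in $F$ to $c$ but lies in a different component of $\varphi(G_i)$. This forces $v$ to be the image of an isolated vertex of $G_i$ sitting at a leaf of a star whose center is used by a nontrivial component.

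The plan is to re-embed the isolated vertices of each $G_i$, keeping the nontrivial components fixed. Component-closedness lets us treat the components of $G_i$ separately when arguing that the re-embedded graph is still in $\calG$, but the whole $G_i$ must stay a single guest to preserve the count $t$. The free targets, in order of preference, are vertices of stars untouched by $\varphi(G_i)$, and centers or leaves of stars whose center is not used by a nontrivial component.

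The main obstacle is the case where no such free vertices remain, so that the only free vertices are leaves of stars whose centers are already used. Here I would try an exchange argument: shrink a nontrivial component's image within its star, or swap which stars host which components, and argue by counting vertices that an injective placement with no forbidden leaf–center adjacency exists. If this fails, the fallback is to choose, among all optimal covers, one minimizing the number of such bad isolated vertices, and derive a contradiction.
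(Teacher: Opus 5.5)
Your proofs of (i) and (ii) are correct and are the same as the paper's. Connected subgraphs of a forest are induced. For (ii) you $2$-color the forest obtained by contracting the components of each guest image, and then split the guest along the color classes using heredity.

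Part (iii), however, is not proved. You describe re-embedding, exchange and extremal strategies but leave the ``main obstacle'' open. It cannot be closed, because (iii) is false for component-closed $\calG$ once guests may have several nontrivial components. Take $\calG=\set{2K_2\cupdot K_1,\,K_2,\,K_1}$, which is component-closed, and the star forest $F=K_{1,2}\cupdot K_2$ with edges $ca$, $cb$, $de$.

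Two copies of $2K_2\cupdot K_1$ give an injective $2$-global $\calG$-cover: embed one as the edges $ca,de$ plus the vertex $b$, and the other as $cb,de$ plus the vertex $a$. So $\cn{g}{\calG}{F}\le 2$. An induced copy of $2K_2\cupdot K_1$ would occupy all five vertices of $F$ and hence carry all three edges. So in an induced cover every guest covers at most one edge, and $\icn{g}{\calG}{F}=3$.

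Here every copy of the guest necessarily puts its isolated vertex on a leaf whose center is used. That is exactly your bad configuration, so no re-embedding, exchange, or extremal choice of cover can help. Your list of free targets is also too generous: isolated vertices placed at a center and at one of its leaves break inducedness as well.

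The paper's one-line argument says that by component-closedness one may delete isolated vertices from the guests. This is valid only when every guest has at most one nontrivial component, e.g.\ when $F$ is a single star. The sentence ``the same holds for star forests'' has precisely the gap above.

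What is true is (iii) for classes closed under deleting isolated vertices. This is all that is used later, in the proof of \cref{lem:bounded_g_H_minor_G_hereditary}, where $\calG$ is hereditary. Replace each $G_i$ by $G_i$ minus its isolated vertices; this graph lies in $\calG$. Its image is a disjoint union of substars, at most one per star of $F$ and each containing its center, hence an induced subgraph of $F$. No re-embedding is needed.
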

\begin{proof}
    To prove \eqref{item:ulf-forest}, it suffices to observe that each connected subgraph~$G$ of a forest~$F$ is an induced subgraph of~$F$. 
    This holds in particular for each component of the image~$\varphi(G_i)$ of a guest~$G_i$ in a $\calG$-cover~$\varphi\colon G_1 \cupdot \dots \cupdot G_t \to F$ of~$F$.
    Thus, $\cn{x}{\calG}{F} = \icn{x}{\calG}{F}$ for every~$\lab{x} \in \set{\lab{u}, \lab{l}, \lab{f}}$.

    In order to prove~\eqref{item:g-forest-G_her}, we show that each subgraph~$F'$ of a forest~$F$ can be decomposed into two induced subgraphs of~$F$. 
    As $\calG$ is hereditary, it then follows that each guest graph of a $k$-global $\calG$-cover of~$F$ can be decomposed into two induced graphs,  yielding a $2k$-global-induced $\calG$-cover of~$F$.

    Now let~$F'$ be a subgraph of the forest~$F$. 
    It remains to show that~$F'$ can be decomposed into two induced subgraphs of~$F$.
    Consider the auxiliary graph~$H$ obtained by contracting each component of~$F'$ to a single vertex and joining two vertices if the corresponding components are joined by an edge in~$F$.
    As~$H$ is a forest and as such is bipartite, we can decompose the components of~$F'$ into two graphs~$F_1, F_2$ such that all components assigned to the same graph~$F_i$ have distance at least~$2$ in~$F$. 
    That is, $F_1$ and~$F_2$ are induced subgraphs of~$F$ covering all edges of~$F'$. 
    \begin{figure}
        \centering
        \begin{subfigure}[t]{.3\linewidth}
        \centering
        \includegraphics[width=0.8\linewidth,page=1]{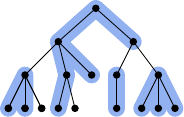}
        \caption{}
        \label{fig:forest}
        \end{subfigure}\hfil
        \begin{subfigure}[t]{.3\linewidth}
        \centering
        \includegraphics[width=0.8\linewidth,page=2]{figures/forests.pdf}
        \caption{}
        \label{fig:forest_auxiliary}
        \end{subfigure}\hfil
        \begin{subfigure}[t]{.3\linewidth}
        \centering
        \includegraphics[width=0.8\linewidth,page=3]{figures/forests.pdf}
        \caption{}
        \label{fig:forest_two_induced}
        \end{subfigure}
        \caption{(\subref{fig:forest}) A forest~$F$ ($\blacksquare$) and a (non-induced) subgraph~$F'$ (\textcolor{signalblue3}{$\blacksquare$}) of~$F$. (\subref{fig:forest_auxiliary}) A $2$-vertex-coloring of the auxiliary graph~$H$. (\subref{fig:forest_two_induced}) The corresponding decomposition of~$F'$ into two induced subgraphs~$F_1$ (\textcolor{signalblue4}{$\blacksquare$}) and~$F_2$ (\textcolor{signalblue2}{$\blacksquare$}).}
        \label{fig:decompose_forest_into_two_induced}
    \end{figure}

    To prove~\eqref{item:g-star-G_her}, observe that each subgraph of a star~$S$ is either an induced subgraph of~$S$ or contains an isolated vertex. 
    As $\calG$ is component-closed, we may assume that no guest of a $k$-global $\calG$-cover $\varphi\colon G_1 \cupdot \dots \cupdot G_t \to S$ contains isolated vertices. 
    That is, $\varphi$ is in particular a $k$-global-induced $\calG$-cover. 
    The same holds for star forests.
\end{proof}

In order to establish lower bounds on induced covering numbers, we argued in \cref{lem:sep_gul_H_bounded_tw_G_cc} that only small graphs of the guest class~$\calG$ appear as induced subgraphs of a host~$H$.
That is, high-degree vertices of~$H$ are hit by many guests in every injective $\calG$-cover.
For folded covering numbers it is substantially harder to establish such lower bounds:  if a cover~$\varphi \colon G_1 \cupdot \dots \cupdot G_t \to H$ is not injective, the graphs~$G_i$ need not be subgraphs of the host~$H$.
However, if~$G_i$ contains an odd cycle, so does~$\varphi(G_i)$. 

\begin{observation}
\label{obs:cover_odd_cycles}
    Let~$C_{\ell}$ be a cycle of odd size, $H$ be a graph and~$\varphi \colon C_{\ell} \to H$ be a graph homomorphism.
    Each of the following holds.
    \begin{enumerate}[(i)]
        \item $\varphi(C_{\ell})$ contains a cycle.
        \item If~$H = C_k$ for some~$k$, then $k \leq \ell$. 
    \end{enumerate}
\end{observation}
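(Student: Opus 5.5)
We prove both parts by looking at how the homomorphism $\varphi$ acts on the closed walk that $C_\ell$ traces out. Write $C_\ell = v_0 v_1 \dots v_{\ell-1} v_0$. Since $\varphi$ is a homomorphism, $\varphi(v_i)\varphi(v_{i+1})$ is an edge of $H$ for every $i$, with indices taken modulo $\ell$. In particular, consecutive images are distinct, since $H$ has no loops. So $W = \varphi(v_0)\varphi(v_1)\dots\varphi(v_{\ell-1})\varphi(v_0)$ is a closed walk of odd length $\ell$ in $H$, and all of its edges lie in $\varphi(C_\ell)$.

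For (i), we use the standard fact that every closed walk of odd length contains an odd cycle. Suppose $\varphi(C_\ell)$ contained no cycle. Then $\varphi(C_\ell)$ would be a forest, hence bipartite. Fix a proper $2$-coloring of it. Along $W$ the colors alternate at every step, so after returning to the start the number of steps must be even. This contradicts the oddness of $\ell$. Hence $\varphi(C_\ell)$ contains a cycle, and in fact an odd one.

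For (ii), let $H = C_k$. The image $\varphi(C_\ell)$ is a subgraph of $C_k$, and by (i) it contains a cycle. The only cycle contained in $C_k$ is $C_k$ itself, so $\varphi(C_\ell) = C_k$. In particular $\varphi$ is surjective on vertices. Hence $k = \abs{V(C_k)} \le \abs{V(C_\ell)} = \ell$.

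No step is a real obstacle. The only point needing care is the parity argument in (i). It has to be phrased for a closed walk rather than a cycle, because $\varphi$ may identify vertices. The bipartite-coloring argument avoids any explicit decomposition of the walk.
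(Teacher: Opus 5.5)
Your proof is correct. The image is a closed walk of odd length, so $\varphi(C_\ell)$ is non-bipartite; your $2$-coloring argument works for any bipartite image, which also justifies taking the cycle to be odd. Since $C_k$ is the only cycle contained in $C_k$, the map is surjective on vertices, which gives $k \leq \ell$.

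The paper states this observation without proof, and your argument is the standard justification it implicitly relies on.
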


We now generalize the approach of \cref{lem:sep_gul_H_bounded_tw_G_cc} to folded-induced covering numbers.
That is, we create hosts which are structurally very similar to fans, and guests that behave similar to broken fans:
Let~$P$ be the path on vertices~$x_3, x_2, x_1, y_1, y_2, y_3$ (in order) and $P'$ the spanning subgraph where the edge~$x_1y_1$ is missing.
Now add a universal vertex~$f$ ($f'$) to $P$ ($P'$), i.e., a vertex adjacent to every vertex of~$P$ ($P'$).

For a graph~$G$ and a vertex~$v \in V(G)$, let~$\tilde{G}$ be the graph obtained from~$G$ by adding a copy of a cycle~$C$ and joining every vertex of~$C$ with an edge to~$v$. 
We say that~$\tilde{G}$ is obtained from~$G$ by \emph{joining} the cycle~$C$ to~$v$.
Joining
\begin{itemize}
    \item one copy of a cycle~$C_{2k+1}$ to the vertices~$x_2, y_3$ respectively,
    \item $k$ copies of cycles~$C_{2k+1}$ to the vertices~$x_3,y_2$ respectively,
    \item $k^2$ copies of cycles~$C_{2k+3}$ to the vertices~$x_1,y_1$ respectively,
    \item and attaching $k^k$~leaves to~$f$
\end{itemize}
yields a \emph{wheeled (broken) fan}~$W_{k}$ ($W_{k}')$; see \cref{fig:wheeled_fan} for an illustration.
\begin{figure}
    \centering
    \begin{subfigure}[t]{.5\linewidth}
    \centering
    \includegraphics[page=1]{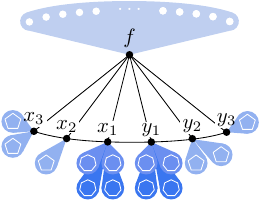}
    \caption{}
    \label{fig:wheeled_fan}
    \end{subfigure}\hfil
    \begin{subfigure}[t]{.5\linewidth}
    \centering
    \includegraphics[page=2]{figures/wheeled_fans.pdf}
    \caption{}
    \label{fig:wheeled_broken_fan}
    \end{subfigure}
    \caption{(\subref{fig:wheeled_fan}) A wheeled fan~$W_k$ and (\subref{fig:wheeled_broken_fan}) a wheeled broken fan~$W_k'$ for $k=2$.}
\end{figure}
We call the underlying graph~$P$ ($P'$) its \emph{(broken) path} and~$f$ ($f'$) the tip of~$W_k$ ($W_k'$).

\begin{proposition}
\label{lem:sep_gulf_H_tw_2_G_cc}
    There is a host class~$\calH$ with $\tw(\calH) \leq 3$ and a component-closed guest class~$\calG$ such that for every $\lab{x} \in \set{\lab{g}, \lab{u},\lab{l}, \lab{f}}$, the host class~$\calH$ is not $(\icn{x}{\calG}{},\cn{x}{\calG}{})$-bounded.
\end{proposition}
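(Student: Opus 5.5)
We take the host class $\calH = \set{W_k \given k \geq 2}$ of wheeled fans and the guest class $\calG = \set{W_k' \given k \geq 2} \cup \set{K_2}$ of wheeled broken fans together with a single edge. Each guest is connected, so $\calG$ is component-closed. For the treewidth bound, the fan part $P + f$ has treewidth $2$. Joining a cycle to a single vertex $v$ produces a wheel hanging at $v$, which has treewidth $3$, and adding leaves does not increase treewidth; hence $\tw(\calH) \leq 3$. As in \cref{lem:sep_gul_H_bounded_tw_G_cc}, the map $W_k' \cupdot K_2 \to W_k$ that sends the missing edge $x_1y_1$ to $K_2$ is a $2$-global cover. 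This gives $\cn{x}{\calG}{W_k} \leq 2$ for all $\lab{x}$. Because $\icn{f}{\calG}{} \leq \icn{l}{\calG}{} \leq \icn{u}{\calG}{} \leq \icn{g}{\calG}{}$, it suffices to show $\icn{f}{\calG}{W_k} \to \infty$. Concretely, we aim to show that every weak induced $\ell$-folded cover of $W_k$ has $\ell \geq k$, or some similar unbounded function of $k$.

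Fix a weak induced $\ell$-folded cover $\varphi$ with $\ell < k$ and consider a guest $G_i = W_q'$.

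\textbf{Locating the tip.} By \cref{lem:degree}\,\eqref{itm:degree_folded}, $\deg(f') \geq q^q$ forces $\ell \cdot \deg_H(\varphi(f')) \geq q^q$. For $q$ large relative to $k$, only $f$ in $W_k$ has degree that large, since $f$ carries the $k^k$ leaves while all other degrees are polynomial in $k$. So for large $q$ we get $\varphi(f') = f$. The guests with small $q$, together with the copies of $K_2$, each cover few of the $k^k$ leaf edges at $f$. Counting then forces $f$ to be hit at least $k$ times unless some guest with $q \approx k$ covers many of these edges; this mirrors the counting in \cref{lem:sep_gul_H_bounded_tw_G_cc}.

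\textbf{Rigidity of the path via odd cycles.} By \cref{obs:cover_odd_cycles}, the image of an odd cycle $C_{2q+1}$ or $C_{2q+3}$ contains an odd cycle of length at most its own length. In $W_k$ the odd cycles are essentially the joined cycles $C_{2k+1}$ and $C_{2k+3}$ together with short triangles through the hubs and through $f$. If $q = k$, the joined cycles must therefore map either onto joined cycles of the same or greater length or collapse onto short triangles. The multiplicities $1$, $k$ and $k^2$ distinguish the attachment vertices $x_2, y_3$, then $x_3, y_2$, then $x_1, y_1$. A hub vertex receiving $k^2$ copies of $C_{2k+3}$ must map to a vertex carrying a $C_{2k+3}$, which exists only at $x_1$ and $y_1$, or else the folding degree bound is violated. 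The count $k$ versus $1$ then separates $x_3, y_2$ from $x_2, y_3$. From this we conclude that $\varphi$ maps the broken path $P'$ onto $P$ in order, with $x_1 \mapsto x_1$ and $y_1 \mapsto y_1$ up to the symmetry. The image of the component then contains $x_1, y_1$ but not the edge $x_1y_1$, contradicting weak inducedness. Hence every guest covers only a bounded fraction of the edges at $f$, and $\ell \geq k$.

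\textbf{Main obstacle.} The hardest part is making the rigidity step airtight for non-injective maps. Possible failures are that a joined cycle folds onto a triangle through its hub, or that several guest vertices map to the same host vertex. Such foldings must be ruled out using the $\ell$-folded bound combined with the multiplicities $k$ and $k^2$. Pigeonhole shows that too many cycles collapsing onto a single small wheel would exceed $\ell$ preimages at some rim vertex, and the odd cycle lengths prevent mapping a $C_{2k+3}$ into a $C_{2k+1}$. I would first settle where each hub of $W_q'$ can map, then deduce the image of $P'$.
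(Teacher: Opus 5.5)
Your construction, the treewidth bound, the $2$-global cover and the counting at the tip all agree with the paper. The gap is in the rigidity step for $q=k$, where you use the odd-cycle obstruction in the wrong direction, and this breaks exactly the case that matters. A homomorphism $C_m \to C_n$ between odd cycles exists precisely when $n \leq m$. So a joined $C_{2k+3}$ \emph{can} fold onto a $C_{2k+1}$, while a $C_{2k+1}$ can never map into a $C_{2k+3}$. Both ``same or greater length'' and ``prevent mapping a $C_{2k+3}$ into a $C_{2k+1}$'' are false.

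Pinning $x_1',y_1'$ to $\set{x_1,y_1}$ needs only the degree bound. What the degree bound cannot do is prevent $x_2',x_3',y_2',y_3'$ from being mapped to $x_1$ or $y_1$, because these have the largest degrees on $P$. This is where odd cycles are needed. A hub is universal to its joined cycle, so a $C_{2k+1}$ joined to $v'$ maps into $N(\varphi(v'))\setminus\set{\varphi(v')}$. But $N(x_1)$ induces $k^2$ disjoint copies of $C_{2k+3}$ plus the bipartite path $x_2fy_1$, which admits no closed walk of odd length $2k+1$.

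Without this claim, nothing in your sketch excludes, e.g., $\varphi(x_1')=\varphi(y_1')=x_1$ and $\varphi(y_2')=y_1$. Then $x_1y_1$ is the image of $y_1'y_2'$ and no contradiction arises. With the claim, you walk along $P'$ and use the deliberate asymmetry of $W_k$. There is no symmetry to mod out: $x_3,y_2$ carry $k$ cycles, while $x_2,y_3$ carry only one. If $\varphi(x_1')=y_1$, then $\varphi(x_2')=y_2$ and $\varphi(x_3')=y_3$. Since $\deg(x_3')=2k^2+k+2$ but $\deg(y_3)=2k+3$, this forces $\ell=\Omega(k)$. Hence $\varphi(x_1')=x_1$, and likewise $\varphi(y_1')=y_1$.

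Your final sentence also does not follow as stated. Since $\varphi$ is not injective, $x_1'y_1'\notin E(W_k')$ does not imply that $x_1y_1$ is missing from the image; you must show that \emph{no} edge of $W_k'$ is mapped onto $x_1y_1$. After the previous claims, the only candidates are edges $x_1'c$ (or $y_1'c$) with $c$ on a joined $C_{2k+3}$ and $\varphi(c)=y_1$. Then the whole cycle lies in the component of $N(x_1)$ containing $y_1$, namely the path $x_2fy_1$, which is impossible for an odd cycle. The paper argues via pigeonhole on these three vertices instead.

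The failure mode you worry about, a joined cycle folding onto a triangle through its hub, cannot occur at all. The cycle must avoid the image of its hub and hence lies in that vertex's neighbourhood, which in $W_k$ is triangle-free for $k\geq 2$. The real work in the rigidity step is the two claims above.
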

\begin{proof}
    Let the host class~$\calH$ consist of all wheeled fans, i.e. $\calH = \set{W_{k} \given k \geq 1}$.
    The guest class~$\calG$ consists of~$K_2$ and all wheeled broken fans~$W_{k}'$, i.e., $\calG = \set{W_{k}' \given k \geq 1} \cup \set{K_2}$.
    Observe that~$\calG$ is component-closed and $\tw(\calH) \leq 3$.
    
    In order to show that the host class~$\calH$ is not $(\icn{x}{\calG}{},\cn{x}{\calG}{})$-bounded for any $\lab{x} \in \set{\lab{g}, \lab{u}, \lab{l}, \lab{f}}$, it suffices to show the following: for every sufficiently large~$k$, we have
    \begin{enumerate}[(i)]
        \item\label{itm_f_rel:wheel_cn} $\cn{f}{\calG}{W_{k}} \leq \cn{l}{\calG}{W_{k}} \leq \cn{u}{\calG}{W_{k}} \leq \cn{g}{\calG}{W_{k}} \leq 2$
        \item\label{itm_f_rel:wheel_icn} $\icn{g}{\calG}{W_{k}} \geq \icn{u}{\calG}{W_{k}} \geq \icn{l}{\calG}{W_{k}} \geq \icn{f}{\calG}{W_{k}} \geq \frac{1}{3}k$.
    \end{enumerate}

    To prove \eqref{itm_f_rel:wheel_cn}, observe that $W_{k}'$ covers all but one edge (namely the edge~$x_1y_1$) of~$W_{k}$. 
    Thus, $\varphi\colon W_{k}' \cupdot K_2 \to W_{k}$ is a $2$-global $\calG$-cover certifying $\cn{g}{\calG}{W_{k}} \leq 2$.

    \smallskip

    To prove \eqref{itm_f_rel:wheel_icn}, consider an $\ell$-folded-induced $\calG$-cover $\varphi\colon G_1 \cupdot \dots \cupdot G_t \to W_{k}$.
    Here, the cover~$\varphi$ need not be injective.
    We need to show that $\ell \geq \frac{1}{3}k$ if~$k$ is sufficiently large.
    Let~$f$ denote the tip of~$W_{k}$ and let~$P$ be its underlying path on vertices~$x_3,x_2,x_1,y_1,y_2,y_3$ (in order).

    We show that for sufficiently large~$k$, we have
    $\ell \geq \frac{1}{3}k$,
    or each broken wheeled fan that corresponds to a cover graph~$G_i$ covers at most~$3k^{k-1}$ edges incident to the tip~$f$.
    As $\deg_{W_k}(f) \geq k^k$, it then follows that~$\ell \geq \frac{1}{3}k$, which concludes the proof of \eqref{itm_f_rel:wheel_icn}.
    
    Let~$W_{q}'$ be a broken wheeled fan that corresponds to a cover graph~$G_i$ and let~$f'$ be its tip and~$P'$ its underlying broken path on vertices~$x_3',x_2',x_1',y_1',y_2',y_3'$ (in order).

    Case 1. $q \geq k+1$. 
    As $\deg_{W_q'}(f') \geq k^{k+1}$ and $\deg_{W_{k}}(\varphi(f')) \leq 3k^k$ for sufficiently large~$k$, \cref{lem:degree}\,\eqref{itm:degree_folded} yields $\ell \geq \frac{1}{3}k$.

    Case 2. $q < k$. 
    As $\abs{V(W_q')} \leq 3(k-1)^{k-1}$ for sufficiently large~$k$, $W_q'$ covers at most $3k^{k-1}$ of the edges incident to~$f$.
   
    Case 3. $q = k$. 
    We prove that $\ell \geq \frac{1}{3}k$ or each of the following holds:
    \begin{enumerate}
        \item\label{itm:proof_bases} We have $\varphi(f') = f$.
        \item\label{itm:proof_paths} We have $\varphi(V(P')) \subseteq V(P)$.
        \item \label{itm:proof_xi_yi} For all $i \in \set{2,3}$, we have $\varphi(x_i'), \varphi(y_i') \in \set{x_3,x_2,y_2,y_3}$.
        \item \label{itm:proof_x1} We have $\varphi(x_1') = x_1$.
        \item \label{itm:proof_y1} We have $\varphi(y_1') = y_1$.
        \item \label{itm:proof_xy} There is no edge~$e \in E(W_k')$ such that $\varphi(e) = x_1y_1$.
    \end{enumerate}
    If each of the claims above holds, $\varphi(W_k')$ is not an induced subgraph of~$W_k$.
    Indeed, both $x_1$ and~$y_1$ lie in $\varphi(W_k')$, yet $x_1y_1 \notin E(\varphi(W_k'))$; a contradiction to the choice of~$\varphi$. 
    That is, $\ell \geq \frac{1}{3}k$.
    
    We first argue that $\varphi(f') = f$ or $\ell \geq \frac{1}{3}k$, i.e., \eqref{itm:proof_bases} holds.
    Suppose $f'$ is mapped to a vertex~$v$ other than~$f$.
    As $\deg_{W_q'}(f') \geq k^k$ and $\deg_{W_{k}}(v) \leq 3k^3$ for sufficiently large~$k$, \cref{lem:degree}\,\eqref{itm:degree_folded} yields $\ell \geq \frac{1}{3}k$.

    We may thus assume that $\varphi(f') = f$.
    All neighbors of~$f'$ are mapped to neighbors of~$f$.
    If a vertex~$z$ of the broken path~$P'$ is mapped to a leaf of~$f$, then all neighbors of~$z$ are mapped to~$f$. 
    Yet, the neighborhood of~$z$ contains two adjacent vertices (e.g. on a cycle joined to~$z$), and we therefore obtain a loop at~$f$; a contradiction.
    Thus, all vertices of the broken path~$P'$ are mapped to vertices of~$P$ and \eqref{itm:proof_paths} follows.
    
    To prove \eqref{itm:proof_xi_yi}, let~$v' \in \set{x_3',x_2',y_2',y_3'}$.
    By \eqref{itm:proof_paths}, it suffices to show that $\varphi(v') \notin \set{x_1, y_1}$. 
    Suppose $\varphi(v') = x_1$. 
    Note that a cycle~$C_{2k+1}$ attached to~$v'$ has to be mapped to a cycle~$C_{2k+3}$ attached to~$x_1$.
    Indeed, as $v'$ is universal for each of the copies $C_{2k+1}$, no vertex of $C_{2k+1}$ can be mapped to~$\varphi(v')$ and the only odd cycles in the neighborhood of~$x_1$ are cycles~$C_{2k+3}$.
    Now \cref{obs:cover_odd_cycles} yields a contradiction.
    That is~$\varphi(v') \neq x_1$. 
    Similarly, we obtain $\varphi(v') \neq y_1$ and \eqref{itm:proof_xi_yi} follows.

    Observe that $\varphi(x_1') \in \set{x_1,y_1}$.
    Indeed, by \eqref{itm:proof_paths}, we have $\varphi(x_1') \in V(P)$; and since $\deg_{W_{k}'}(x_1') \geq k^3$, yet $\deg_{W_k}(v) \leq 3k^2$ for all $v \in V(P) - \set{x_1,y_1}$ (and sufficiently large~$k$), \cref{lem:degree}\,\eqref{itm:degree_folded} yields $\ell \geq \frac{1}{3}k$ or $\varphi(x_1') \in \set{x_1,y_1}$ as desired.

    Now suppose $\varphi(x_1') = y_1$. 
    As \eqref{itm:proof_paths} holds and~$x_2'$ is a neighbor of~$x_1'$, $\varphi(x_2') \in \set{x_1,y_2}$ follows.
    By \eqref{itm:proof_xi_yi}, we obtain $\varphi(x_2') = y_2$. 
    A similar argument shows $\varphi(x_3') = y_3$.  
    Yet, $\deg_{W_k'}(x_3') \geq k^2$ and $\deg_{W_k}(y_3) \leq 3k$ for sufficiently large~$k$. 
    Thus, \cref{lem:degree}\,\eqref{itm:degree_folded} yields $\ell \geq \frac{1}{3}k$ for $k$ sufficiently large and \eqref{itm:proof_x1} follows. 
    A similar argument yields \eqref{itm:proof_y1}.

    It remains to prove \eqref{itm:proof_xy}.
    Suppose there is an edge~$u'v' \in E(W_k')$ with~$\varphi(u'v') = x_1y_1$. 
    Neither~$u'$ nor~$v'$ can correspond to a leaf of~$f'$.
    As each $x_i'$ ($y_i')$ is universal for each of its attached cycles and~$\varphi(N(x_i')) \subseteq N(\varphi(x_i))$, \eqref{itm:proof_xi_yi} shows that $u'v'$ needs to correspond to
    \begin{itemize}
    \item an edge of a cycle~$C' = C_{2k+3}$ attached to~$x_1'$ or~$y_1'$, 
    \item or an edge connecting $C'=C_{2k+3}$ to $x_1'$ or~$y_1'$.
    \end{itemize}
    We may assume that~$C'$ is attached to~$x_1'$. 
    As~$x_1'$ is universal in~$C'$, no vertex of~$C'$ is mapped to~$\varphi(x_1') = x_1$ (cf. \eqref{itm:proof_x1}), refuting the first option.
    Thus, $u'v'$ corresponds to an edge connecting~$C'$ to~$x_1'$ and~$\varphi(v') = y_1$. 
    Yet, all vertices of~$C'$ need to be mapped to neighbors of~$x_1$. 
    As there are only three neighbors of~$x_1$ which are not part of a cycle attached to~$x_1$, either one of these is hit at least~$\frac{1}{3}k$ times or some vertex of~$C'$ is mapped to~$x_1$.
    In the latter case, we obtain a loop in~$W_k$, a contradiction.
    Thus, $\ell \geq \frac{1}{3}k$ or \eqref{itm:proof_xy} holds.
\end{proof}

\section{Hereditary guests and sparse hosts}

When there is no restriction on the guest class~$\calG$, in general, the induced covering number~$\icn{x}{\calG}{}$ cannot be bounded in terms of~$\cn{x}{\calG}{}$ (cf. \cref{sec:general_guests_tw}).
In this section we thus turn to hereditary guest classes and consider sparse hosts.
Here, sparsity refers to host classes that are $M$-minor-free or of bounded maximum average degree.
We prove all results in \cref{tab:overview} which correspond to ($\icn{x}{\calG}{},\cn{x}{\calG}{}$)-boundedness in columns~3-5 and the separation result in column~8.

When the guest class~$\calG$ is hereditary, covering numbers are monotone with respect to taking induced subgraphs. 
That is, if~$H'$ is an induced subgraph of a graph~$H$, then $\cn{x}{\calG}{H'} \leq \cn{x}{\calG}{H}$.
\begin{lemma}[{Goetze, Stumpf, Ueckerdt \cite[Lemma~9]{goetze2025boundednessseparationgraphcovering}}]
    \label{lem:restrict-cover-to-subgraph}
    Let $\calG$ be any graph class and $\lab{x} \in \set{g,u,l,f}$.
    For every graph $H$ each of the following holds.
    \begin{enumerate}[(i)]
        \item If $\calG$ is monotone, then for every subgraph $H'$ of $H$ we have $\cn{x}{\calG}{H'} \leq \cn{x}{\calG}{H}$. \label{item:cover_subgraph_G_mon}
        \item If $\calG$ is hereditary, then for every induced subgraph $H'$ of $H$ we have $\cn{x}{\calG}{H'} \leq \cn{x}{\calG}{H}$.
        \label{item:cover_ind_subgraph_G_her}
        \item If $\calG$ is hereditary and $\lab{x} \neq \lab{g}$, then for every weak induced subgraph $H'$ of $H$ we have $\cn{x}{\calG}{H'} \leq \cn{x}{\calG}{H}$. \label{item:cover_weak_ind_subgraph_G_her}
    \end{enumerate}
\end{lemma}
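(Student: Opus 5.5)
The approach is to take an optimal cover of~$H$ and restrict each guest to~$V(H')$ (or to one component of~$H'$), keeping only the edges that land in~$H'$. In each case we check that the restricted cover still uses guests from~$\calG$, still covers every edge of~$H'$, and has no larger cost.

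Fix an optimal $\lab{x}$-type $\calG$-cover $\varphi\colon G_1 \cupdot \dots \cupdot G_t \to H$ of~$H$.

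For~\eqref{item:cover_subgraph_G_mon}, replace each guest~$G_i$ by the subgraph~$G_i'$ whose vertices are $V(G_i)\cap\varphi^{-1}(V(H'))$ and whose edges are those edges of~$G_i$ mapped into~$E(H')$. Since~$\calG$ is monotone, $G_i' \in \calG$, and for~$\lab{u}$ also $G_i' \in \overline{\calG}$. The restriction of~$\varphi$ is still a homomorphism into~$H'$, and it is edge-surjective because every edge of~$H'$ had a preimage in some~$G_i$. Injectivity is inherited. The number of guests does not increase. Preimages of a vertex $v\in V(H')$ can only shrink. Hence the global, union, local and folded conditions are all preserved.

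For~\eqref{item:cover_ind_subgraph_G_her}, take instead $G_i' = G_i[\varphi^{-1}(V(H'))\cap V(G_i)]$, which lies in~$\calG$ (respectively~$\overline{\calG}$) because~$\calG$ is hereditary. Every edge of~$G_i'$ is mapped to an edge of~$H$ with both ends in~$V(H')$, and that edge lies in~$E(H')$ because~$H'$ is induced. Every edge of~$H'$ keeps its preimage edge. So this is again a cover of the same type, with no more guests and no larger vertex multiplicities.

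For~\eqref{item:cover_weak_ind_subgraph_G_her}, let $C_1,\dots,C_s$ be the components of~$H'$, each an induced subgraph of~$H$. Restrict each~$G_i$ to $G_i[\varphi^{-1}(V(C_j))]$ for each~$j$ separately, exactly as in~\eqref{item:cover_ind_subgraph_G_her}. This gives guests in~$\calG$ whose edges land in~$E(C_j)\subseteq E(H')$. For~$\lab{l}$ and~$\lab{f}$, a vertex $v\in V(C_j)$ only meets the pieces coming from~$C_j$. Its number of preimages is therefore at most its number of preimages under~$\varphi$, so the $k$-local or $k$-folded bound survives, and injectivity is kept in the local case. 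For~$\lab{u}$, for fixed~$i$ take the disjoint union over~$j$ of the pieces of~$G_i$. This union is an induced subgraph of~$G_i\in\overline{\calG}$, namely on $\varphi^{-1}(V(H'))$ with the edges between distinct~$C_j$ removed. Those removed edges join different components of the same graph, so the piece is in fact a disjoint union of induced subgraphs of components of~$G_i$. It therefore lies in~$\overline{\calG}$, and the number~$t$ of guests is unchanged.

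The global case fails here because splitting one guest into pieces by component multiplies the count. That is why $\lab{x}\neq\lab{g}$ is required.

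The main subtlety is this union case: one has to check that gluing the per-component pieces back together stays inside~$\overline{\calG}$. This holds by heredity together with closure of~$\overline{\calG}$ under disjoint unions.
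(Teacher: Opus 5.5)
Your proof is correct, and the restriction argument you use is the natural one. The paper gives no proof of this lemma and only quotes it from Goetze, Stumpf and Ueckerdt; restricting every guest to the preimage of $V(H')$ (for (iii), of each component of $H'$ separately) is exactly what is needed. One wording slip: in the union case of (iii), the disjoint union over $j$ of the graphs $G_i[\varphi^{-1}(V(C_j))]$ is in general \emph{not} an induced subgraph of $G_i$, but your closing justification is the right one, namely that each such piece lies in $\overline{\calG}$ because $\overline{\calG}$ is hereditary, and $\overline{\calG}$ is closed under disjoint unions.
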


In order to prove ($\icn{x}{\calG}{},\cn{x}{\calG}{}$)-boundedness, it thus suffices to decompose a graph~$H$ into smaller parts~$H_1', \dots, H_t'$ for which we can bound the induced covering number~$\icn{x}{\calG}{}$ in terms of the covering number~$\cn{x}{\calG}{}$. 
If the parts~$H_i'$ are weak induced subgraphs of~$H$, the union of the weak induced $\calG$-covers of the graphs~$H_i'$ forms a weak induced $\calG$-cover of~$H$.

\begin{lemma}
\label{lem:cover_with_small_parts}
    Let~$\calG$ be a hereditary graph class and~$\calH'$ a graph class such that there exists a function~$f \colon \N \to \N$ with $\icn{x}{\calG}{H'} \leq f(\cn{x}{\calG}{H'})$ for all~$H' \in \calH'$.
    Let~$\lab{x} \in \set{\lab{g},\lab{u},\lab{l},\lab{f}}$ and~$H$ be a graph that is the union of~$t$ graphs~$H_1', \dots, H_t'$ with~$H_i' \in \calH'$. 
    \begin{itemize}
        \item If~$\lab{x} \neq \lab{g}$ and each~$H_i'$ is a weak induced subgraph of~$H$, then $\icn{x}{\calG}{H} \leq t \cdot f(\cn{x}{\calG}{H})$. \label{itm:cover_with_weak_induced}
        \item If~$\lab{x} = \lab{g}$ and each~$H_i'$ is an induced subgraph of~$H$, then $\icn{g}{\calG}{H} \leq t \cdot f(\cn{g}{\calG}{H})$. \label{itm:cover_with_induced}
    \end{itemize}
\end{lemma}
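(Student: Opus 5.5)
The plan is to cover each part $H_i'$ separately with the best induced cover available, and then take the union of these $t$ covers. Everything rests on two facts: covering numbers do not increase when passing to the parts, and induced-ness is inherited from the parts by $H$.

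\emph{Step 1 (monotonicity).} Since $\calG$ is hereditary, \cref{lem:restrict-cover-to-subgraph} gives $\cn{x}{\calG}{H_i'} \leq \cn{x}{\calG}{H}$ for every $i$. For $\lab{x} \neq \lab{g}$ we use part~\eqref{item:cover_weak_ind_subgraph_G_her}, since $H_i'$ is weak induced. For $\lab{x}=\lab{g}$ we use part~\eqref{item:cover_ind_subgraph_G_her}, since $H_i'$ is induced. We may assume $f$ is non-decreasing, replacing it by $n \mapsto \max_{m\le n} f(m)$ if necessary. Then
\[\icn{x}{\calG}{H_i'} \leq f(\cn{x}{\calG}{H_i'}) \leq f(\cn{x}{\calG}{H}).\]
This is also where the (implicit) assumption $H_i' \in \calH'$ is used.

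\emph{Step 2 (transfer of induced-ness).} Fix an optimal cover $\varphi_i$ of $H_i'$: weak induced for $\lab{x}\neq\lab{g}$, induced for $\lab{x}=\lab{g}$. Compose it with the inclusion $H_i' \subseteq H$. We check that each image stays (weak) induced in $H$.

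In the global case, $\varphi_i(G)$ is induced in $H_i'$ and $H_i'$ is induced in $H$, so $\varphi_i(G)$ is induced in $H$.

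In the weak case, take a component $C$ of $\varphi_i(G)$. It is connected and induced in $H_i'$, so it lies inside one component $D$ of $H_i'$. Since $D$ is induced in $H$, every edge of $H$ between vertices of $C$ lies in $D$, hence in $H_i'$, hence in $C$. So $C$ is induced in $H$. This small argument is the only genuinely non-formal point, and it is where I expect any subtlety to lie. It is also why weak induced parts suffice only for $\lab{x}\neq\lab{g}$: in the global case a guest may be disconnected, and its components could be joined by edges of $H$ that do not belong to $H_i'$.

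\emph{Step 3 (counting).} Let $\varphi = \varphi_1 \cupdot \dots \cupdot \varphi_t$. It is edge-surjective because $H = \bigcup_i H_i'$. It is (weak) induced by Step 2, and injective whenever each $\varphi_i$ is.

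For $\lab{g}$ and $\lab{u}$, the total number of guests is at most $\sum_i \icn{x}{\calG}{H_i'} \leq t\cdot f(\cn{x}{\calG}{H})$. For $\lab{l}$ and $\lab{f}$, each vertex $v$ of $H$ lies in at most $t$ of the parts. In each such part, $v$ has at most $f(\cn{x}{\calG}{H})$ preimages under $\varphi_i$, so $\abs{\varphi^{-1}(v)} \leq t \cdot f(\cn{x}{\calG}{H})$.

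This proves both items of \cref{lem:cover_with_small_parts}.
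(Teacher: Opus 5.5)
Your proof is correct and follows the paper's argument: restrict via \cref{lem:restrict-cover-to-subgraph}, take optimal (weak) induced covers of the parts, and take their union. You additionally spell out the transfer of (weak) inducedness to $H$ and the local/folded vertex count, which the paper leaves implicit. One precision: replacing $f$ by $n \mapsto \max_{m \le n} f(m)$ is not literally without loss of generality, since it proves the bound for this larger function. Some monotonicity is genuinely needed, though: for matchings, $K_{1,3}$ as the union of two induced copies of $P_3$ violates the statement when $f(2)=2$ and $f(3)=1$. The paper's proof tacitly uses monotonicity in the step $f(\cn{x}{\calG}{H_i'}) \leq f(\cn{x}{\calG}{H})$.
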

\begin{proof}
    \cref{lem:restrict-cover-to-subgraph} yields $\icn{x}{\calG}{H_i'} \leq f(\cn{x}{\calG}{H_i'}) \leq f(\cn{x}{\calG}{H})$ for all $i$ as $\calG$ is hereditary.
    As each~$H_i'$ is a (weak) induced subgraph of~$H$, their union forms a (weak) induced cover of~$H$. 
    We thus obtain 
    \[
    \icn{x}{\calG}{H} \leq \sum_{i=1}^{t} \icn{x}{\calG}{H_i'} \leq \sum_{i=1}^{t} f(\cn{x}{\calG}{H}) \leq t \cdot f(\cn{x}{\calG}{H}).\qedhere
    \]
\end{proof}

Forests are a good choice for such small parts~$H_i'$.
Indeed, for every forest~$F$, we have~$\icn{x}{\calG}{F} \leq 2\cn{x}{\calG}{F}$ if~$\calG$ is hereditary (cf. \cref{lem:forests}) and every graph~$H$ of small maximum average degree can be decomposed into few weak induced forests:

\begin{lemma}[Axenovich, Dörr, Rollin, Ueckerdt {\cite[Thereom~7,8]{axenovich2019induced}}]
    \label{lem:mad_bounded_split_into_weak_induced_star_forests}
    Let~$H$ be a graph.
    \begin{enumerate}[(i)]
        \item If~$\mad(H) \leq d$, then $H$ is the union of at most $2d$~weak induced (star) forests.
        \item If~$\tw(H) \leq d$, then $H$ is the union of at most $\frac{1}{2}(d+1)^2$ induced forests.
    \end{enumerate}
\end{lemma}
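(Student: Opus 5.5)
The plan is to prove the two parts separately. Both use standard orientation and coloring arguments.

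For (i), let $\mad(H) \leq d$. By the classical characterization (Hakimi), $H$ has an orientation in which every vertex has out-degree at most $\lceil d/2 \rceil$. Equivalently, we use that $H$ is $\lfloor d\rfloor$-degenerate.

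\begin{enumerate}
\item Fix such an orientation and color the edges leaving each vertex with distinct colors $1,\dots,\lceil d/2\rceil$.
\item In each color class, every vertex has out-degree at most one. Each component is therefore a pseudoforest whose edges point toward a "root" or a cycle. The aim is to refine each class into stars: every edge $uv$ oriented $u\to v$ is assigned to the star centered at $v$.
\item Each star forest must be \emph{weak induced}. This requires that no edge of $H$ joins two leaves of the same star, nor a leaf to the center other than via the star edge. Edges between different stars are allowed, since weak induced only constrains each component.
\item To achieve this, color the vertices of $H$ properly so that the leaves of one star are pairwise non-adjacent. Alternatively, use the argument of Axenovich et al.\ with an acyclic/star coloring of bounded size.
\end{enumerate}

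The constant $2d$ is what requires care. The approach I would try first counts as follows. Take an orientation with out-degree at most $d/2$, which gives $d/2$ out-forests. Then split each of them by a $2$-coloring of a suitable conflict structure and parity along the in-arborescence. Together these give $\lceil d/2\rceil\cdot 4 \approx 2d$ classes. In each class, every component is a star centered at a vertex $v$ consisting of in-neighbors of $v$ at a fixed "level". Two leaves of such a star that are adjacent in $H$ must then be separated, and this is where the remaining factor must be absorbed. Since the paper cites this result from \cite{axenovich2019induced}, I would ultimately follow their argument if the counting does not close.

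For (ii), let $\tw(H)\leq d$. Embed $H$ in a $d$-tree and use its $(d+1)$-coloring $c$. In a $d$-tree every clique is rainbow, and a proper coloring with $d+1$ colors exists. For each pair of colors $\{a,b\}$, the subgraph induced by $c^{-1}(a)\cup c^{-1}(b)$ is an induced subgraph of $H$.

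\begin{enumerate}
\item First show that this induced subgraph is a forest. A cycle in a chordal graph that uses only two colors would have a chord, giving a triangle on two colors. That is impossible since cliques are rainbow.
\item Every edge $uv$ of $H$ lies in the class $\{c(u),c(v)\}$.
\end{enumerate}

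This gives $\binom{d+1}{2}\leq \frac12(d+1)^2$ induced forests. Part (ii) is routine. The main obstacle is achieving the exact constant $2d$ in (i) while guaranteeing weak inducedness.
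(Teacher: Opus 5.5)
Your part (ii) is correct. A proper $(d+1)$-colouring of a $d$-tree is acyclic: any two colour classes induce a chordal, triangle-free graph, i.e.\ a forest. Restricting to $V(H)$ therefore gives $\binom{d+1}{2}\le\frac12(d+1)^2$ induced forests of $H$ that cover every edge. This is the standard argument behind the cited result; the paper itself gives no proof of either part and only cites Axenovich et al.

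Part (i), however, is not proved, and your outline does not reach $2d$.

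\begin{itemize}
\item An orientation with out-degree $\lceil d/2\rceil$ (Hakimi) is in general not acyclic. A label class in which every vertex has out-degree at most one is then a pseudoforest, not an in-forest, and splitting by depth parity breaks on odd directed cycles. Also, $4\lceil d/2\rceil=2d+2$ for odd $d$.
\item If you switch to the acyclic degeneracy orientation, out-degrees go up to $\lfloor d\rfloor$. Then $\lfloor d\rfloor$ labels times the parity factor $2$ already uses the whole budget $2d$, leaving nothing to separate adjacent leaves.
\item The separation cannot come from a $2$-colouring of some conflict structure. For an arbitrary labelling, the siblings in one label class can induce a triangle or a larger clique. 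Separating them by a proper colouring of $H$ costs a factor of up to $\lfloor d\rfloor+1$, i.e.\ $O(d^2)$ classes.
\end{itemize}

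So the one point that matters, independence of the leaves of each star, is exactly what is missing.

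The missing idea is to choose the labelling so that siblings are independent automatically.

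\begin{itemize}
\item Every subgraph of $H$ has a vertex of degree at most $\mad(H)\le d$, so $H$ is $k$-degenerate with $k=\lfloor d\rfloor$. Orient every edge towards its endpoint that comes earlier in a degeneracy order.
\item Process the vertices in this order. At $u$, list its out-neighbours $w_1,\dots,w_m$ ($m\le k$) from earliest to latest.
\item Give the edge $uw_a$ a label in $[k]$ that differs from the labels of $uw_1,\dots,uw_{a-1}$. It must also differ from the labels of all edges $w_b\to w_a$ with $b>a$, which are already labelled because $w_b$ precedes $u$.
\item At most $(a-1)+(m-a)<k$ labels are forbidden, so a label is always available.
\end{itemize}

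Each label class is then an in-forest. Two children $u,u'$ of $v$ in the same class are non-adjacent. Indeed, suppose $u$ precedes $u'$ and $uu'\in E(H)$. Then $u'\to u$, $u'\to v$ and $u\to v$ with $v$ before $u$, so $u'v$ was forced to avoid the label of $uv$.

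Finally, split each class by parity of depth. This gives two star forests whose stars (a vertex together with all its children) have independent leaf sets and are therefore induced. Hence you obtain $2\lfloor d\rfloor\le 2d$ weak induced star forests.
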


\begin{corollary}
\label{prop:bounded_H_mad_tw_G_hereditary}
    Let $\calG$ be any hereditary graph class and $\lab{x} \in \set{\lab{g},\lab{u},\lab{l},\lab{f}}$.
    For every graph~$H$ each of the following holds.
     \begin{enumerate}[(i)]
         \item If~$\lab{x} \neq \lab{g}$, then $\icn{x}{\calG}{H} \leq 2\mad(H) \cdot \cn{x}{\calG}{H}$. \label{itm:mad}
         \item If~$\lab{x} = \lab{g}$, then $\icn{g}{\calG}{H} \leq (\tw(H)+1)^2 \cdot \cn{g}{\calG}{H}$. \label{itm:tw}
     \end{enumerate}
     In particular, for hereditary guest classes $\calG$, every host class $\calH$ of bounded maximum average degree is $(\icn{x}{\calG}{},\cn{x}{\calG}{})$-bounded for $\lab{x} \in \set{\lab{u},\lab{l},\lab{f}}$.
\end{corollary}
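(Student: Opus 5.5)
The plan is to combine \cref{lem:cover_with_small_parts} with \cref{lem:mad_bounded_split_into_weak_induced_star_forests}, taking forests as the host class~$\calH'$. First we need the binding function on~$\calH'$. For \eqref{itm:mad} we take $\calH'$ to be the class of star forests. \cref{lem:forests}\,\eqref{item:ulf-forest} gives $\icn{x}{\calG}{F} = \cn{x}{\calG}{F}$ for every forest~$F$ and $\lab{x}\neq\lab{g}$, so $f(n)=n$ works. For \eqref{itm:tw} we take $\calH'$ to be the class of all forests. Since $\calG$ is hereditary, \cref{lem:forests}\,\eqref{item:g-forest-G_her} gives $\icn{g}{\calG}{F}\leq 2\cn{g}{\calG}{F}$, so $f(n)=2n$ works.

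Next we decompose~$H$. For \eqref{itm:mad}, set $d=\mad(H)$. By \cref{lem:mad_bounded_split_into_weak_induced_star_forests}\,(i), $H$ is the union of at most $2d$ weak induced star forests $H_1',\dots,H_t'$. The first bullet of \cref{lem:cover_with_small_parts} with $f(n)=n$ then yields $\icn{x}{\calG}{H}\leq 2\mad(H)\cdot\cn{x}{\calG}{H}$. If $\mad(H)$ is not an integer, we apply the lemma with $d=\mad(H)$ directly as stated, or round as needed; this only affects the constant. For \eqref{itm:tw}, \cref{lem:mad_bounded_split_into_weak_induced_star_forests}\,(ii) writes $H$ as a union of at most $\tfrac12(\tw(H)+1)^2$ induced forests. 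The second bullet of \cref{lem:cover_with_small_parts} with $f(n)=2n$ gives $\icn{g}{\calG}{H}\leq \tfrac12(\tw(H)+1)^2\cdot 2\cn{g}{\calG}{H}=(\tw(H)+1)^2\cn{g}{\calG}{H}$.

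One subtlety concerns \cref{lem:cover_with_small_parts}. Its proof uses \cref{lem:restrict-cover-to-subgraph} to get $\cn{x}{\calG}{H_i'}\leq\cn{x}{\calG}{H}$. For $\lab{x}\neq\lab{g}$ this needs part~\eqref{item:cover_weak_ind_subgraph_G_her} with weak induced parts, and for $\lab{g}$ it needs genuinely induced parts, as in~\eqref{item:cover_ind_subgraph_G_her}. This is exactly why the global case must use the treewidth decomposition into induced forests rather than the weak induced one. We also use that $f$ is monotone, which holds for both choices.

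The final ``in particular'' statement follows from \eqref{itm:mad}. If $\mad(\calH)\leq d<\infty$, then $n\mapsto 2dn$ is a binding function for every $\lab{x}\in\set{\lab{u},\lab{l},\lab{f}}$. No step is a real obstacle. The only points needing care are matching weak induced versus induced parts to the right covering number, and checking that the union of weak induced covers of the parts is again weak induced, which \cref{lem:cover_with_small_parts} already handles.
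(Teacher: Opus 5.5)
Your proposal is correct and follows the paper's proof essentially verbatim. You decompose $H$ via \cref{lem:mad_bounded_split_into_weak_induced_star_forests} into at most $2\mad(H)$ weak induced (star) forests, respectively at most $\frac{1}{2}(\tw(H)+1)^2$ induced forests, and then feed the binding functions $n\mapsto n$ and $n\mapsto 2n$ from \cref{lem:forests} into \cref{lem:cover_with_small_parts}. Your hedge about rounding is unnecessary: applying the lemma with $d=\mad(H)$ yields an integer number of parts that is at most $2\mad(H)$, which gives exactly the stated bound.
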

\begin{proof}
    By \cref{lem:mad_bounded_split_into_weak_induced_star_forests}, $H$ is the union of at most $2\mad(H)$ weak induced forests.
    Now \eqref{itm:mad} follows from \cref{lem:cover_with_small_parts} as we have $\icn{x}{\calG}{F}=\cn{x}{\calG}{F}$ for every forest~$F$ and $\lab{x} \neq \lab{g}$ by \cref{lem:forests}.
    
    Similarly, \cref{lem:mad_bounded_split_into_weak_induced_star_forests,lem:forests} show that~$H$ is the union of at most $\frac{1}{2}(\tw(H)+1)^2$ induced forests and $\icn{g}{\calG}{F}\leq 2\cn{g}{\calG}{F}$ for every forest~$F$.
    An application of \cref{lem:cover_with_small_parts} yields \eqref{itm:tw}.
\end{proof}

In fact, in some cases the maximum average degree~$\mad(H)$ of a graph~$H$ can be bounded in terms of the covering number~$\cn{x}{\calG}{}$ for a guest class~$\calG$ and~$\mad(\calG)$ (cf. \cref{lem:mad_G_bounded_bounded_folded_cov_for_induced_subgraphs_then_mad_H_bounded}).
Thereby, we obtain a similar result to \cref{prop:bounded_H_mad_tw_G_hereditary} for hereditary guest classes of bounded maximum average degree (cf. \cref{prop:bounded_H_any_G_hereditary_mad}).

\begin{lemma}[Goetze, Stumpf, Ueckerdt {\cite[Lemma~12]{goetze2025boundednessseparationgraphcovering}}]
\label{lem:mad_G_bounded_bounded_folded_cov_for_induced_subgraphs_then_mad_H_bounded}
    Let~$\calG$ be a graph class with $\mad(\calG) \leq d$ and let~$H$ be a graph.
    If there exists a constant~$s$ such that for every induced subgraph~$H'$ of~$H$ we have $\cn{f}{\calG}{H'} \leq s$, then $\mad(H) \leq sd$.
\end{lemma}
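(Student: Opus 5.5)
We need to prove: if $\mad(\calG)\le d$ and every induced subgraph $H'$ of $H$ satisfies $\cn{f}{\calG}{H'}\le s$, then $\mad(H)\le sd$. Since $\mad$ is attained on some subgraph, and adding edges only increases the density, it suffices to bound the density of every \emph{induced} subgraph. So fix an induced subgraph $H'$ of $H$ on $n\ge 1$ vertices. The goal is to show $2\abs{E(H')}\le s d\, n$.

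By hypothesis there is an $s$-folded $\calG$-cover $\varphi\colon G_1\cupdot\dots\cupdot G_t\to H'$. Two facts about this cover drive the count.
\begin{itemize}
    \item Since $\varphi$ is edge-surjective, every edge of $H'$ is the image of at least one edge of some $G_i$, so $\abs{E(H')}\le \sum_i \abs{E(G_i)}$.
    \item Since the cover is $s$-folded, every vertex of $H'$ has at most $s$ preimages. Hence $\sum_i \abs{V(G_i)}=\sum_{v\in V(H')}\abs{\varphi^{-1}(v)}\le s n$.
\end{itemize}
Each $G_i$ lies in $\calG$, so taking $G_i$ itself as the subgraph in the definition of $\mad$ gives $2\abs{E(G_i)}\le d\abs{V(G_i)}$. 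Combining these,
\[
2\abs{E(H')}\le \sum_i 2\abs{E(G_i)}\le d\sum_i\abs{V(G_i)}\le s d\, n .
\]

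This bound holds for every induced subgraph $H'$. For an arbitrary subgraph $H''$ of $H$, the induced subgraph on $V(H'')$ has at least as many edges on the same vertex set, so its density bounds that of $H''$. Therefore $\mad(H)\le sd$.

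The only subtle point is that the guests need not be subgraphs of $H'$, because the cover is not necessarily injective. The argument never uses injectivity, however: it only counts edges via surjectivity and vertices via the folding bound. Likewise, $\mad(\calG)\le d$ bounds $\abs{E(G_i)}$ for the whole guest $G_i$, so no heredity of $\calG$ is needed.
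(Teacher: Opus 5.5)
Your proof is correct: reducing to induced subgraphs and then double counting, $2\abs{E(H')} \le \sum_i 2\abs{E(G_i)} \le d\sum_i \abs{V(G_i)} \le sd\,\abs{V(H')}$, uses edge-surjectivity, $\mad(\calG)\le d$ applied to each whole guest, and the $s$-folded bound, and it correctly needs neither injectivity nor heredity. This paper gives no proof of its own; it cites the lemma from Goetze, Stumpf and Ueckerdt, and your double count is the natural argument for it.
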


\begin{corollary}
\label{prop:bounded_H_any_G_hereditary_mad}
    If~$\calG$ is a hereditary guest class with $\mad(\calG) \leq d$, then we have for every graph~$H$ and every $\lab{x} \in \set{\lab{u},\lab{l},\lab{f}}$
    \begin{align*}
        \icn{x}{\calG}{H} \leq 2d \cdot \cn{x}{\calG}{H}^2.
    \end{align*}
     In particular, for such guest classes $\calG$, every host class $\calH$ is $(\icn{x}{\calG}{},\cn{x}{\calG}{})$-bounded.
\end{corollary}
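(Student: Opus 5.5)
We combine \cref{lem:mad_G_bounded_bounded_folded_cov_for_induced_subgraphs_then_mad_H_bounded} with \cref{prop:bounded_H_mad_tw_G_hereditary}\,\eqref{itm:mad}: first bound $\mad(H)$ in terms of $d$ and the covering number of~$H$, then feed this bound into the estimate $\icn{x}{\calG}{H} \leq 2\mad(H)\cdot \cn{x}{\calG}{H}$.

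Fix a graph~$H$ and $\lab{x} \in \set{\lab{u},\lab{l},\lab{f}}$, and set $s = \cn{x}{\calG}{H}$. We may assume $s<\infty$, since otherwise there is nothing to show. Let $H'$ be any induced subgraph of~$H$. Since $\calG$ is hereditary, \cref{lem:restrict-cover-to-subgraph}\,\eqref{item:cover_ind_subgraph_G_her} gives $\cn{x}{\calG}{H'} \leq s$. The hierarchy of covering numbers then yields $\cn{f}{\calG}{H'} \leq \cn{x}{\calG}{H'} \leq s$. Thus the hypothesis of \cref{lem:mad_G_bounded_bounded_folded_cov_for_induced_subgraphs_then_mad_H_bounded} holds with constant~$s$ for every induced subgraph of~$H$, and because $\mad(\calG) \leq d$ we conclude $\mad(H) \leq sd$.

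Applying \cref{prop:bounded_H_mad_tw_G_hereditary}\,\eqref{itm:mad}, which is valid for hereditary~$\calG$ and $\lab{x}\neq\lab{g}$, we obtain
\[
\icn{x}{\calG}{H} \leq 2\mad(H)\cdot \cn{x}{\calG}{H} \leq 2sd \cdot s = 2d\cdot \cn{x}{\calG}{H}^2 .
\]
The ``in particular'' statement follows by taking the binding function $f(s) = 2ds^2$, which works for every host class simultaneously.

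The only step that needs care is verifying the hypothesis of \cref{lem:mad_G_bounded_bounded_folded_cov_for_induced_subgraphs_then_mad_H_bounded} for \emph{all} induced subgraphs rather than for $H$ alone. Hereditariness of~$\calG$, through \cref{lem:restrict-cover-to-subgraph}, is exactly what supplies this. The argument is restricted to $\lab{x}\neq\lab{g}$ only because the final application of \cref{prop:bounded_H_mad_tw_G_hereditary}\,\eqref{itm:mad} requires weak induced forest decompositions, which suffice for these three covering numbers but not for the global one.
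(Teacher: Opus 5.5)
Your proposal is correct and follows the paper's proof step for step. Hereditariness (via \cref{lem:restrict-cover-to-subgraph}) together with the covering-number hierarchy gives $\cn{f}{\calG}{H'} \leq \cn{x}{\calG}{H}$ for every induced subgraph~$H'$, so \cref{lem:mad_G_bounded_bounded_folded_cov_for_induced_subgraphs_then_mad_H_bounded} yields $\mad(H) \leq d \cdot \cn{x}{\calG}{H}$, and this is exactly what the paper then inserts into \cref{prop:bounded_H_mad_tw_G_hereditary}\,\eqref{itm:mad}.
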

\begin{proof}
Consider a graph~$H$.
As~$\calG$ is hereditary, we have $\cn{f}{\calG}{H'} \leq \cn{x}{\calG}{H'} \leq \cn{x}{\calG}{H}$ for every induced subgraph~$H'$ of~$H$ (cf. \cref{lem:restrict-cover-to-subgraph}).
Thus,  $\mad(H) \leq \cn{x}{\calG}{H} \cdot d$ by \cref{lem:mad_G_bounded_bounded_folded_cov_for_induced_subgraphs_then_mad_H_bounded}. 
Now \cref{prop:bounded_H_mad_tw_G_hereditary} yields $\icn{x}{\calG}{H} \leq 2\mad(H) \cdot \cn{x}{\calG}{H} \leq 2d \cdot (\cn{x}{\calG}{H})^2$.
\end{proof}

In \cref{prop:bounded_H_mad_tw_G_hereditary,prop:bounded_H_any_G_hereditary_mad} we obtained $(\icn{x}{\calG}{},\cn{x}{\calG}{})$-boundedness for hereditary guest classes~$\calG$ when the host class~$\calH$ or the guest class~$\calG$ has bounded maximum average degree in all settings but for~$\lab{x} = \lab{g}$.
Yet, similar results do not hold for global covering numbers.
\begin{proposition}[{Axenovich, Dörr, Rollin, Ueckerdt~\cite[Theorem 4 (ii) (a)]{axenovich2018kstronginducedarboricity}\protect\footnotemark}]
\label{lem:sep_g_H_mad_G_monotone}
\footnotetext{While they prove a slightly different statement, their proof also yields \cref{lem:sep_g_H_mad_G_monotone}. We restate their proof.}
There is a host class~$\calH$ with~$\mad(\calH) \leq 2$ and a monotone guest class~$\calG$ with $\mad(\calG) \leq 1$ such that~$\calH$ is not $(\icn{g}{\calG}{},\cn{g}{\calG}{})$-bounded.
\end{proposition}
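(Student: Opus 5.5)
The plan is to take the guest class $\calG$ to be the class $\calM$ of all matchings, with isolated vertices allowed. This class is monotone. Moreover, every subgraph $G'$ of a matching satisfies $2\abs{E(G')} \leq \abs{V(G')}$, so $\mad(\calM) \leq 1$. Conversely, a graph with two incident edges already has maximum average degree $4/3 > 1$, so matchings are essentially forced by the hypothesis $\mad(\calG) \leq 1$. With this choice, $\cn{g}{\calM}{H}$ is the chromatic index $\chi'(H)$ and $\icn{g}{\calM}{H}$ is the strong chromatic index $\chi'_s(H)$. The statement therefore reduces to the following: find hosts $H_n$ with $\mad(H_n) \leq 2$ for which $\chi'_s(H_n)/\chi'(H_n) \to \infty$. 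I would follow the construction of \cite[Theorem~4\,(ii)\,(a)]{axenovich2018kstronginducedarboricity} and restate its proof.

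The key steps, in order, are as follows.
\begin{enumerate}
\item Fix a family $H_n$ of sparse hosts; $\mad(H_n) \leq 2$ means every component contains at most one cycle. I would first try trees or unicyclic graphs built from high-degree vertices with attached structure.
\item Upper bound: exhibit a proper edge colouring of $H_n$ with few colours. By K\"onig's theorem, for bipartite hosts this can be done with $\Delta(H_n)$ colours.
\item Lower bound: find a large set of edges that pairwise lie at distance at most $1$, that is, edges that pairwise cannot share an induced matching. Every such edge needs its own colour, so the size of this set bounds $\icn{g}{\calM}{H_n}$ from below.
\item Verify $\mad(H_n) \leq 2$ by checking that every subgraph has at most as many edges as vertices, i.e.\ that the host is a pseudoforest.
\end{enumerate}

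The main obstacle is the choice of hosts. A quick computation on simple candidates, such as stars and spiders with legs of length $2$, gives $\chi'_s \leq 2\Delta - 1$ while $\chi' \geq \Delta$. So for trees the ratio seems to stay bounded, and the separation cannot come from a naive degree count. For this reason I would re-examine the exact statement of \cite{axenovich2018kstronginducedarboricity} before committing to matchings. The guest class there may differ; for instance, a class that is not closed under disjoint union would force each guest to be a single small graph. With such a class, the induced condition on the whole image $\varphi(G_i)$, as opposed to on each component separately, becomes the source of the separation. If matchings do suffice, the lower-bound step is where the real work lies: one has to identify the unboundedly large family of pairwise conflicting edges inside a pseudoforest.
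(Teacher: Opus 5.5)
\textbf{There is a genuine gap: with matchings as guests there is no separation for any host class.} For the class $\calM$ of matchings and every graph $H$,
\[
\icn{g}{\calM}{H}=\chi'_s(H)\le 2\Delta(H)^2\le 2\bigl(\cn{g}{\calM}{H}\bigr)^2 .
\]
The first inequality is the greedy strong edge colouring, since an edge conflicts with at most $2\Delta(\Delta-1)$ others. The second holds because $\cn{g}{\calM}{H}=\chi'(H)\ge\Delta(H)$.

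The same conclusion holds for every monotone $\calG$ with $\mad(\calG)\le 1$ that contains an edge. Such a class consists of matchings, and it contains either all of them or only those with at most $m$ edges. In the second case $\icn{g}{\calG}{H}\le\abs{E(H)}\le m\cdot\cn{g}{\calG}{H}$.

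Your reduction also misreads the definition. Non-boundedness requires hosts on which $\cn{g}{\calG}{}$ stays below a fixed constant while $\icn{g}{\calG}{}$ is unbounded. The condition $\chi'_s/\chi'\to\infty$ would not suffice, since a quadratic bound is still a binding function. Your lower-bound tool, a family of pairwise conflicting edges, has size $O(\Delta^2)$, so it can never grow while $\chi'$ stays bounded.

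\textbf{What this means for the statement, and what the paper does.} You correctly observe that $\mad(\calG)\le 1$ forces matchings. Combined with the bound above, the proposition cannot be proved with the constants as printed, and the paper's own construction does not meet them either.

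The paper takes $\calG$ to be the monotone class of all star forests. Here $\mad(K_{1,n})=2n/(n+1)$, so this is $\mad<2$, not $\le 1$. The hosts are the $1$-subdivisions $K_n'$ of complete graphs. These have average degree $4(n-1)/(n+1)$, already $12/5>2$ for $n=4$.

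\emph{Upper bound.} Colour the two edges at each subdivision vertex differently. Each colour class is then a star forest centred at branch vertices, so $\cn{g}{\calG}{K_n'}\le 2$.

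\emph{Lower bound.} Take an induced cover of $K_N'$ by star forests $F_1,\dots,F_t$. Colour an edge $e$ of $K_N$ by $i$ if both halves of $e$ lie in $F_i$. Otherwise colour it by a pair $\set{i,j}$ with one half in $F_i$ and the other in $F_j$.
\begin{itemize}
\item A monochromatic triangle of colour $i$ would put a $6$-cycle into the forest $F_i$.
\item For colour $\set{i,j}$, inducedness of the whole image $F_j$ (not just its components) forces both cycle edges at each branch vertex into the same forest. Going around the odd triangle then yields a subdivision vertex with both halves in one forest, a contradiction.
\end{itemize}
Ramsey's theorem for $\binom{t+1}{2}$ colours then forces $t\to\infty$ as $N\to\infty$.

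Your instinct that the separation must come from inducedness of the whole image was right. However, the guests need unbounded degree (stars, in a union-closed class). The lower bound is a Ramsey and odd-cycle argument, not a count of pairwise conflicting edges.
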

\begin{proof}
    Let~$K_n'$ denote the \emph{$1$-subdivision} of~$K_n$, that is the graph obtained from $K_n$ by replacing each edge with a path on three vertices.
    We call the vertices introduced by the replacement \emph{subdivision vertices}. 
    The guest class~$\calG$ consists of all star forests,
    the host class~$\calH$ of all $1$-subdivisions of complete graphs, i.e. $\calH = \set{K_n' \given n \in \N}$. 
    Note that~$\calG$ is monotone and $\mad(\calG) \leq 1$ and $\mad(\calH) \leq 2$.
    
    It remains to show that
    \begin{enumerate}[(i)]
        \item\label{itm:Kn_subdiv_cn} $\cn{g}{\calG}{K_n'} \leq 2$ for every $n \in \N$. 
        \item\label{itm:Kn_subdiv_icn} For every $n \in \N$, there exists $N \geq n$ such that $\icn{g}{\calG}{K_N'} \geq n$.
    \end{enumerate}
    To prove \eqref{itm:Kn_subdiv_cn}, consider any (not necessarily proper) $2$-edge-coloring of~$K_n'$ where no two edges incident to the same subdivision vertex share a color. 
    As each of the two color classes corresponds to a star forest, and their union covers all edges of~$K_n'$, we obtain $\cn{g}{\calG}{K_n'} \leq 2$.

    In order to show \eqref{itm:Kn_subdiv_icn}, we consider the minimum number~$N$ of vertices such that every $(n+1)^2$-edge-coloring of the complete graph~$K_N$ contains a monochromatic triangle (the number~$N$ is finite by Ramsey's theorem \cite{wan1997upperBoundsRamsey}).
    
    Consider an induced cover~$\varphi' \colon F_1 \cupdot \dots \cupdot F_t \to K_N'$ of~$K_N'$ with $t = \icn{g}{\calG}{K_N'}$~star forests.
    It now suffices to construct an edge-coloring~$\varphi\colon E(K_N) \to [(t+1)^2]$ of the complete graph~$K_N$ with at most $(t+1)^2$~colors which contains no monochromatic triangle. 
    Indeed, as every such edge-coloring uses more than $(n+1)^2$~colors, we obtain $(\icn{g}{\calG}{K_N'}+1)^2 = (t+1)^2 > (n+1)^2$ and \eqref{itm:Kn_subdiv_icn} follows.

    To construct the edge-coloring~$\varphi$, consider for every edge~$e \in E(K_N)$ the two edges~$e_1, e_2 \in E(K_N')$ obtained through subdivision of~$e$.
    If there exists a forest~$F_i$ such that $e_1, e_2 \in E(F_i)$, set $\varphi(e) = i$. 
    If there is no such forest, there exist two forests~$F_i,F_j$ with $i \neq j$ such that~$e_1 \in E(F_i), e_2 \in E(F_j)$ (choose any such pair). 
    We now set $\varphi(e) = \set{i,j}$. 
    Clearly, $\varphi$ uses at most $t + \binom{t}{2} = \binom{t+1}{2}$~colors. 
    As each~$F_i$ is a forest, $\varphi$ contains no monochromatic triangle in any color~$i$.
    Neither does $\varphi$ contain a monochromatic triangle in a color~$\set{i,j}$ as each of the forests~$F_i, F_j$ is induced in~$K_N'$, see \cref{fig:kn_1-subdivision}.
    \begin{figure}
        \centering
        \begin{subfigure}[t]{.4\linewidth}
            \centering
            \includegraphics[page=3]{figures/kn_1-subdivision.pdf}
            \caption{}
            \label{fig:kn_1-subdision:kn_1-forests_partial}
        \end{subfigure}
        \begin{subfigure}[t]{.4\linewidth}
            \centering
            \includegraphics[page=4]{figures/kn_1-subdivision.pdf}
            \caption{}
            \label{fig:kn_1-subdision:kn_1-forests}
        \end{subfigure}
        
        \caption{If there is a monochromatic triangle in~$K_N$ on edges~$e,f,g$ in color $\set{\textcolor{KITcyanblue}{\blacksquare},\textcolor{KITlilac}{\blacksquare}}$ each subdivision vertex (represented by $\textcolor{KITgray30}{\bullet}$) on the corresponding cycle~$C_6$ in~$K_N'$ is incident to edges of both colors. Thus, each original vertex (represented by $\bullet$) of~$C_6$ is only incident to one of the two colors (otherwise one of the forests~$F_{\textcolor{KITcyanblue}{\blacksquare}}$, $F_{\textcolor{KITlilac}{\blacksquare}}$ would not be induced, see (\subref{fig:kn_1-subdision:kn_1-forests_partial})). Yet, this results in two edges $f_1,f_2$ being of the same color, a contradiction to~$e,f,g$ being colored in $\set{\textcolor{KITcyanblue}{\blacksquare},\textcolor{KITlilac}{\blacksquare}}$ (\subref{fig:kn_1-subdision:kn_1-forests}).}
        \label{fig:kn_1-subdivision}
    \end{figure}
\end{proof}

The global-induced covering number~$\icn{g}{\calG}{H}$ cannot be bounded in terms of~$\mad(\calG)$, $\mad(H)$ and~$\cn{g}{\calG}{H}$. 
We thus now turn to hosts which are even more restricted: host classes~$\calH$ which are $M$-minor-free for some graph~$M$.
For such host classes we obtain~$(\icn{g}{\calG}{},\cn{g}{\calG}{})$-boundedness.
\begin{theorem}
\label{lem:bounded_g_H_minor_G_hereditary}
    If~$\calG$ is hereditary, then we have for every $K_k$-minor free graph~$H$
    \[
    \icn{g}{\calG}{H} \leq \frac{25(k-1)^4}{8}\cdot \cn{g}{\calG}{H}.
    \]
    In particular, for every graph~$M$, every $M$-minor free host class~$\calH$ is $(\icn{g}{\calG}{},\cn{g}{\calG}{})$-bounded.
\end{theorem}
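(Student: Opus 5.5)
The plan is to decompose $H$ into few induced star forests and then apply \cref{lem:cover_with_small_parts} in its global variant. The host class $\calH'$ will be the class of star forests. On this class \cref{lem:forests}\,\eqref{itm:forest_comp_closed} gives $\icn{g}{\calG}{F} = \cn{g}{\calG}{F}$, provided $\calG$ is component-closed. This holds because $\calG$ is hereditary: every component of a graph $G \in \calG$ is an induced subgraph of $G$ and hence lies in $\calG$. So we may take $f(x) = x$ in \cref{lem:cover_with_small_parts}. That lemma also uses \cref{lem:restrict-cover-to-subgraph}\,\eqref{item:cover_ind_subgraph_G_her}, which requires each part to be an \emph{induced} subgraph of $H$ and not merely weak induced. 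The plan therefore reduces the theorem to the claim that every $K_k$-minor-free graph $H$ is the union of at most $t = \frac{25(k-1)^4}{8}$ induced star forests.

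To get these induced star forests I would use a star coloring. A star coloring is a proper vertex coloring in which every path on four vertices receives at least three colors. Equivalently, the union of any two color classes induces a star forest. If $H$ has a star coloring with $c$ colors, the subgraphs $H[V_i \cup V_j]$ for $i<j$ are at most $\binom{c}{2} \le \frac{c^2}{2}$ induced star forests. They cover all edges, since every edge joins two distinct color classes. With $c = \starchrom(H) \le \frac{5(k-1)^2}{2}$ this gives exactly $\frac{c^2}{2} = \frac{25(k-1)^4}{8}$ parts. \cref{lem:cover_with_small_parts} then yields
\[
\icn{g}{\calG}{H} \le \tfrac{25(k-1)^4}{8} \cdot \cn{g}{\calG}{H}.
\]
The final statement follows because every $M$-minor-free graph is $K_{|V(M)|}$-minor-free.

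The main obstacle is the bound $\starchrom(H) \le \frac{5(k-1)^2}{2}$ for $K_k$-minor-free graphs, which is the only step not supplied by earlier results in the paper. I would first look for a known polynomial bound on the star chromatic number of $K_k$-minor-free graphs in the literature. Failing that, I would try to derive it from a degeneracy or orientation argument: orient $H$ so that out-degrees are at most $d = O(k)$ (a degeneracy bound for $K_k$-minor-free graphs), then color greedily so that each vertex avoids the colors of its out-neighbours and of their out-neighbours. Whether this gives the constant exactly is uncertain, since $O(d^2)$ colors from this argument may not match $\frac{5}{2}(k-1)^2$ without the right degeneracy constant. If it does not match, I would accept a different constant of the same order, which still gives boundedness.
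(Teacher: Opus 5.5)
Your proposal is exactly the paper's proof: a star colouring with $s\le\frac52(k-1)^2$ colours gives at most $\frac{s^2}{2}$ induced star forests $H[\Phi^{-1}(i)\cup\Phi^{-1}(j)]$, \cref{lem:forests}\,\eqref{itm:forest_comp_closed} applies because hereditary classes are component-closed, and \cref{lem:cover_with_small_parts} yields the factor $\frac{25(k-1)^4}{8}$; the paper does not prove the bound on $\starchrom$ but imports it from van den Heuvel et al.\ as \cref{lem:star_chrom_for_minor-free}, just as your first option intends. Your fallback would fail for a more basic reason than the constant, and the minor-specific literature bound is genuinely needed: no function of the degeneracy bounds $\starchrom$ (the $1$-subdivision of $K_n$ is $2$-degenerate, but if $m\ge n/c$ branch vertices share a colour in a star colouring with $c$ colours, each colour class of subdivision vertices between them must correspond to a matching of $K_m$, forcing $c\ge m$ and hence $c\ge\sqrt{n}$), and concretely your greedy rule never separates two out-neighbours of a common vertex, so a path $a\leftarrow b\to c\leftarrow d$ may receive colours $1,2,1,2$.
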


To prove above theorem, we cover the edges of a graph~$H$ with few induced star forests.
In fact, this is possible if~$H$ has small \emph{star chromatic number}. 
The star chromatic number~$\starchrom(H)$ of a graph~$H$ is the smallest number~$c$ of colors such that there exists a proper $c$-vertex-coloring of~$H$ where every pair of two color classes induces a star forest.
Using a relation between the star chromatic number and what is known as the \emph{strong $2$-coloring number} the authors of \cite[p.\,131]{van2017generalisedcolouringnumbers} obtain an upper bound on~$\starchrom(\calH)$ for every $K_k$-minor-free graph~$H$.
\begin{lemma}[{van den Heuvel, Ossana de Mendez, Quiroz, Rabinovich, Siebertz \cite[Cor\,1.3]{van2017generalisedcolouringnumbers}\protect\footnotemark}]
\footnotetext{In fact, the authors give an upper bound on the \emph{strong $2$-coloring number} \cite[Cor.\,1.3]{van2017generalisedcolouringnumbers} which corresponds to the star chromatic number \cite[p.\,188]{jiang2023chitochibounded}.}
\label{lem:star_chrom_for_minor-free}
    For every graph $H$ that excludes the complete graph $K_k$ as a minor, we have
    $\starchrom(H) \leq \frac{5}{2}\cdot (k-1)^2$.
\end{lemma}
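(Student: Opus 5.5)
The plan is to reduce the lemma to a bound on the \emph{strong $2$-coloring number} $\operatorname{scol}_2(H)$. We define it via a linear ordering $L$ of $V(H)$. A vertex $u$ is \emph{strongly $2$-reachable} from $v$ if $u \leq_L v$ and there is a $v$--$u$ path of length at most $2$ all of whose internal vertices are larger than $u$ in $L$. Then $\operatorname{scol}_2(H)$ is the minimum over all orderings $L$ of the maximum, over $v$, of the number of vertices strongly $2$-reachable from $v$, with $v$ itself counted. The proof has two steps: first show $\starchrom(H) \leq \operatorname{scol}_2(H)$ for every graph $H$, and then import the minor-free bound from \cite{van2017generalisedcolouringnumbers}.

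For the first step, fix an ordering $L$ attaining $c = \operatorname{scol}_2(H)$. Colour the vertices greedily in the order $L$, giving each $v$ a colour different from all colours already assigned to the at most $c-1$ other vertices strongly $2$-reachable from $v$; this uses at most $c$ colours. The colouring is proper, since for an edge $uv$ with $u <_L v$ the vertex $u$ is strongly $2$-reachable from $v$. A proper colouring is a star colouring exactly when no path on four vertices is $2$-coloured, because then every two colour classes induce a forest without $P_4$, that is, a star forest. So suppose $a,b,c,d$ is a path with $a,c$ of colour $1$ and $b,d$ of colour $2$, and let $m$ be its $L$-minimum vertex. If $m$ is an endpoint, say $m=a$, then $b >_L a$, so $a$ is strongly $2$-reachable from $c$ via $c,b,a$; hence $a$ and $c$ receive different colours, a contradiction. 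If $m$ is internal, say $m=b$, then $c >_L b$, so $b$ is strongly $2$-reachable from $d$ via $d,c,b$, again contradicting that $b$ and $d$ share a colour. The cases $m=d$ and $m=c$ follow by symmetry. Hence $\starchrom(H) \leq \operatorname{scol}_2(H)$.

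For the second step, I would cite \cite[Cor.\,1.3]{van2017generalisedcolouringnumbers}, which gives $\operatorname{scol}_r(H) \leq \binom{k-1}{2}(2r+1)$ for every $K_k$-minor-free graph $H$. For $r=2$ this yields
\[
\starchrom(H) \leq \operatorname{scol}_2(H) \leq 5\binom{k-1}{2} = \tfrac{5}{2}(k-1)(k-2) \leq \tfrac{5}{2}(k-1)^2 .
\]
The main obstacle is this minor-free bound itself, which I would take from the literature rather than reprove. Its proof builds the ordering by repeatedly contracting connected sets, and it is far more delicate than the greedy argument above. The remaining care is in the conventions: one must check that $v$ is counted in $\operatorname{scol}_2$, so that the greedy step really needs only $c$ colours, and that the definition of strong reachability matches the one used in \cite{van2017generalisedcolouringnumbers}.
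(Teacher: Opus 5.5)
\textbf{There is a genuine gap.} Your plan follows the same route as the paper, which cites [Cor.~1.3] and asserts in a footnote that the strong $2$-coloring number ``corresponds to'' $\starchrom$. However, your two steps are about different parameters.

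Your condition, that all internal vertices are larger than $u$, says that $u$ is the $L$-minimum of the path. That is \emph{weak} $2$-reachability. So your greedy argument, which is correct, proves $\starchrom(H) \le \operatorname{wcol}_2(H)$. In van den Heuvel et al., $u$ is strongly $r$-reachable from $v$ only if every internal vertex of the path is larger than $v$, the larger endpoint. The bound $\binom{k-1}{2}(2r+1)$ holds for this strong coloring number $\operatorname{col}_r$ (your $\operatorname{scol}_r$). Since $\operatorname{col}_2 \le \operatorname{wcol}_2$, the cited bound does not transfer to your parameter.

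Switching to their definition does not repair the argument. Your case $m=a$ would then need $b >_L c$ rather than $b >_L a$, and this genuinely fails. Take the path $abcd$ ordered $a <_L b <_L c <_L d$. No vertex strongly $2$-reaches the vertex two steps back, so $|\mathrm{SReach}_2[v]| \le 2$ for every $v$. Hence $\operatorname{col}_2(P_4) = 2 < 3 = \starchrom(P_4)$, and the inequality $\starchrom \le \operatorname{col}_2$ is false in general. The strong $2$-coloring number controls the acyclic chromatic number $\chi_{\mathrm{a}}$, not the star chromatic number. In a bicolored cycle, the $L$-maximum vertex makes one of its two cycle-neighbours strongly $2$-reachable from the other.

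What your argument does give comes from combining it with the weak-coloring bound of the same paper, $\operatorname{wcol}_r(H) \le \binom{r+k-2}{k-2}(k-3)(2r+1)$ for $k \ge 4$. This yields $\starchrom(H) \le 5\binom{k}{2}(k-3)$. That is cubic in $k$ and already exceeds $\tfrac52(k-1)^2$ at $k=4$. To reach the quadratic bound in the statement you need a further ingredient. One option is a quadratic bound on $\operatorname{wcol}_2$ for $K_k$-minor-free graphs. The other is a star-coloring argument that does not pass through the strong $2$-coloring number.

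The check you flag in your last sentence, whether your definition matches theirs, is exactly the one that fails. It is also the identification on which the paper's footnote rests.

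A minor point: the cited theorem assumes $k \ge 4$, and your formula gives $0$ for $k=2$. So the trivial cases $k \le 3$ (edgeless graphs and forests) should be handled separately.
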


\begin{proof}[Proof of \cref{lem:bounded_g_H_minor_G_hereditary}]
    As~$H$ is~$K_k$-minor-free, there exists by \cref{lem:star_chrom_for_minor-free} a proper~$s$-vertex-coloring~$\Phi\colon V(H) \to [s]$ of~$H$ with $s \leq \frac{5}{2}(k-1)^2$ colors such that the union of any two color classes induces a star forest in~$H$.
    For~$(i,j) \in I \coloneqq \set{(i,j) \in [s]^2 \given i < j}$, let~$S_{i,j}$ denote the star forest of~$H$ induced by the the two color classes $\Phi^{-1}(i)$ and~$\Phi^{-1}(j)$.
    The graphs~$S_{i,j}$ form an induced cover of~$H$ with $\frac{1}{2}s^2$~star forests.
    As $\calG$ is hereditary (and thus in particular component-closed), \cref{lem:forests}\,\eqref{itm:forest_comp_closed} yields $\icn{g}{\calG}{S_{i,j}} = \cn{g}{\calG}{S_{i,j}}$.
    That is, the condition of \cref{lem:cover_with_small_parts} is met, and we obtain 
    \[\icn{g}{\calG}{H} \leq \frac{1}{2}s^2\cdot \cn{g}{\calG}{H} \leq \frac{25}{8}(k-1)^4\cn{g}{\calG}{H}.\qedhere\]
\end{proof}

\section{Dense Hosts and Guests}
In the previous section we showed that if the host class~$\calH$ or the guest class~$\calG$ is sparse, and~$\calG$ is hereditary, then we obtain $(\icn{x}{\calG}{},\cn{x}{\calG}{})$-boundedness.
Yet, this cannot be extended to dense guest classes.
\begin{lemma}
\label{lem:sep_gulf_H_any_G_hereditary}
    There is a host class~$\calH$ and a hereditary guest class~$\calG$ such that for every $\lab{x} \in \set{\lab{g}, \lab{u},\lab{l},\lab{f}}$, the host class~$\calH$ is not $(\icn{x}{\calG}{},\cn{x}{\calG}{})$-bounded.
\end{lemma}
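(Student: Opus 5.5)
The plan is to follow the template of \cref{lem:sep_gul_H_bounded_tw_G_cc,lem:sep_gulf_H_tw_2_G_cc}, adapted to hereditary guests: choose hosts $H_k$ and a hereditary guest class $\calG$ such that every $H_k$ has a $2$-global $\calG$-cover, while every graph of $\calG$ that occurs as an induced component inside $H_k$ covers only a $o(1)$-fraction of the edges at some designated high-degree vertex of $H_k$. We can no longer use a single "almost-$H_k$" guest together with a $K_2$, because the hereditary closure of $H_k-e$ contains $H_k-v$ for endpoints $v$ of $e$. The first design decision is therefore to make the hosts dense and \emph{rigid}. Concretely, I would build $H_k$ so that each vertex $v$ of large degree has a neighbourhood in which every set $S\subseteq N(v)$ with $H_k[\set{v}\cup S]\in\calG$ has $\abs{S}\le \deg(v)/k$. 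I would then take $\calG=\Forb_{\mathrm{ind}}(\calF)$ for a finite or structured family $\calF$ of forbidden induced subgraphs; this makes $\calG$ hereditary automatically.

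The first concrete attempt is $\calF$ built from triangles, i.e.\ $\calG$ the triangle-free graphs. Here the upper bound $\cn{g}{\calG}{H_k}\le 2$ holds whenever $\chi(H_k)\le 4$, since we can cover by two bipartite subgraphs obtained by pairing the colour classes. For the lower bound in the global/union/local case I would count as follows. In a weak induced cover, the component containing a vertex $v$ uses only an independent set of $H_k[N(v)]$, since it contains no triangle through $v$. It therefore covers at most $\alpha(H_k[N(v)])$ edges at $v$, and $v$ must be hit at least $\deg(v)/\alpha(N(v))$ times. However, $\chi(H_k)\le 4$ forces $\alpha(N(v))\ge \deg(v)/3$, so this naive choice only gives a constant factor. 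The fix I would try next is to enlarge $\calF$ so that $\calG$ is still rich enough to cover $H_k$ with two guests, but $\calG$ forbids the local configuration "$v$ plus a large independent set of $N(v)$" that appears in $H_k$ (e.g.\ forbid an induced $K_{1,r}$ together with a pendant structure that $H_k$ always supplies). The covering guests are then the non-induced pieces that miss exactly the edges creating this configuration.

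For the folded case I would reuse the odd-cycle rigidity of \cref{obs:cover_odd_cycles} and the degree argument of \cref{lem:degree}\,\eqref{itm:degree_folded}, as in \cref{lem:sep_gulf_H_tw_2_G_cc}. Either a guest vertex mapped onto a high-degree host vertex has much smaller degree than it, or some host vertex is hit $\Omega(k)$ times. The guests are made to carry odd cycles of prescribed lengths so that homomorphic images are forced into the intended positions.

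The main obstacle is precisely the folded-induced lower bound. The example of $K_{n,n}$ minus a perfect matching folding $2$-to-$1$ onto $K_n$ as an induced image shows that non-injective covers can realise dense induced images cheaply. Hence the construction must ensure that any guest whose image is a dense induced piece of $H_k$ must itself contain a forbidden induced configuration, and I expect tuning $\calF$ and $H_k$ against this to be the delicate step.
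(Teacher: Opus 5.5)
Your proposal is a plan, not a proof. It never fixes $\calG$ or the hosts $H_k$, and it leaves both the repaired forbidden family (``an induced $K_{1,r}$ together with a pendant structure'') and the folded lower bound as things you would try. So the whole content of the statement, a concrete pair of classes with matching upper and lower bounds, is missing. The route is also obstructed at several points.
Your first attempt, the class of all triangle-free graphs, contains every bipartite graph. For any host $H$, the bipartite double cover of $H$ maps $2$-to-$1$ onto $H$ with image $H$, which is trivially induced; your $K_{n,n}$ minus a perfect matching is the case $H=K_n$. Hence $\icn{f}{\calG}{H}\le 2$ for \emph{every} $H$, and no choice of hosts gives folded separation. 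For $\lab{g},\lab{u},\lab{l}$ alone the triangle-free class does work once you drop $\chi(H_k)\le 4$: an apex over a triangle-free graph $T$ with $\alpha(T)=o(\abs{V(T)})$ is covered by $T$ and a star, while each induced guest component through the apex covers at most $\alpha(T)$ of its edges.
Any repair therefore has to expel these double covers from $\calG$, and it faces two further constraints. By \cref{prop:bounded_H_any_G_hereditary_mad}, $\mad(\calG)$ must stay unbounded, since otherwise $\lab{u},\lab{l},\lab{f}$ are bounded for every host class. In particular, you cannot simply forbid an induced $K_{1,r}$ inside a triangle-free class, because that bounds the maximum degree. And as long as your hosts satisfy $\chi(H_k)\le 4$, \cref{cond_thm:bounded_c_gulf_H_chi_G_hereditary} shows that separation for $\lab{u},\lab{l},\lab{f}$ would refute \cref{conjecture:bounded_u_H_bip_G_hereditary_bip}, which the paper leaves open.

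The missing idea is that separation here comes from the \emph{density} of the guest class, not from a tuned forbidden family. The paper takes $\calG$ to be all disjoint unions of complete graphs and stars; this class is hereditary and contains every clique. The hosts $H_n$ are $n$ disjoint copies of $K_n$ plus a universal vertex $h$. The guests $n\cdot K_n$ and $K_{1,n^2}$ give $\cn{g}{\calG}{H_n}\le 2$.
Your rigidity property then holds at $h$ with ratio $n$. Suppose a guest component $C$ covers more than $n$ of the $n^2$ edges at $h$. By pigeonhole it covers $hx,hx'$ with $x,x'$ in one copy and $hy$ with $y$ in another. An induced image $\varphi(C)$ then contains the triangle $hxx'$ and the non-edge $xy$, so it is neither a star nor a clique.
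Homomorphic images of stars are stars, and homomorphic images of cliques are cliques, since a clique maps injectively into a loopless graph. So this argument applies verbatim to non-injective covers. Consequently $h$ has at least $n$ preimages in every weak induced $\calG$-cover, all four induced covering numbers of $H_n$ are at least $n$, and no odd-cycle or degree gadgets are needed.
This uses, as the paper's proof does, that the image $\varphi(C)$ of each \emph{guest component} is induced. If only the components of $\varphi(G_i)$ had to be induced, the single guest $(n\cdot K_n)\cupdot K_{1,n^2}\in\calG$ would fold onto $H_n$ with every vertex hit at most twice. That is a concrete instance of the folding danger you anticipated.
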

\begin{proof} 
    The guest class~$\calG = \overline{\set{K_n, K_{1,n} \given n \in \N}}$ consists of all unions of complete graphs and stars. 
    In particular, $\calG$ is hereditary.
    To define the host class~$\calH$, consider for every~$n \in \N$ the graph~$H_n$ obtained from $n$~disjoint copies of~$K_n$ by adding a universal vertex~$h$. We call~$h$ the center of~$H_n$, see \cref{fig:dense_hosts} for an example.
    We set~$\calH = \set{H_n \given n \in \N}$.
    \begin{figure}
        \centering
        \begin{subfigure}[t]{.4\linewidth}
            \centering
            \includegraphics[page=1]{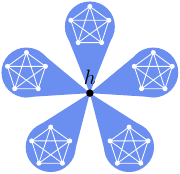}
            \caption{}
            \label{fig:dense_hosts}
        \end{subfigure}
        \begin{subfigure}[t]{.4\linewidth}
            \centering
            \includegraphics[page=2]{figures/sep_gulf_H_any_G_hereditary.pdf}
            \caption{}
            \label{fig:dense_hosts_induced_cover}
        \end{subfigure}
        
        \caption{(\subref{fig:dense_hosts}) A host~$H_n$ (for $n=5$) consisting of a universal vertex~$h$ joined to $n$~disjoint copies of the complete graph~$K_n$. (\subref{fig:dense_hosts_induced_cover}) If an induced subgraph~$H'$ of~$H_n$ contains the vertex~$h$, two vertices~$x,x'$ in one copy of~$K_n$ and a vertex~$y$ in a different copy, then~$H'$ is neither a star nor a complete graph.}
        \label{fig:sep_gulf_H_any_G_hereditary}
    \end{figure}

    In order to show that $\calH$ is not $(\icn{x}{\calG}{},\cn{x}{\calG}{})$-bounded for any~$\lab{x} \in \set{\lab{g}, \lab{u}, \lab{l}, \lab{f}}$, it suffices to prove the following: 
    for every~$n \geq 2$, we have
    \begin{enumerate}[(i)]
        \item\label{itm:H_n_cn} $\cn{f}{\calG}{H_n} \leq \cn{l}{\calG}{H_n} \leq \cn{u}{\calG}{H_n} \leq \cn{g}{\calG}{H_n} \leq 2$
        \item\label{itm:H_n_icn} $\icn{g}{\calG}{H_n} \geq \icn{u}{\calG}{H_n} \geq \icn{l}{\calG}{H_n} \geq \icn{f}{\calG}{H_n} \geq n$.
    \end{enumerate}

    To prove \eqref{itm:H_n_cn}, observe that the edges of~$H_n$ can be covered with the graph~$n \cdot K_n$ (the disjoint union of $n$~copies of~$K_n$) and the star~$K_{1,d}$ where $d = \deg_{H_n}(h) = n^2$ corresponds to the degree of the center~$h$ of~$H_n$.
    That is $\varphi\colon (n \cdot K_n) \cupdot K_{1,d} \to H_n$ is a $2$-global $\calG$-cover certifying $\cn{g}{\calG}{H_n} \leq 2$.

    To prove \eqref{itm:H_n_icn}, we need to show for $n \geq 2$ and a (not necessarily injective) $f$-folded-induced $\calG$-cover $\varphi\colon G_1 \cupdot \dots \cupdot G_t \to H_n$ that $f \geq n$. 
    We prove that each component~$C$ of a cover graph~$G_i$ that hits the center~$h$ of~$H_n$ covers at most~$n$ of the $n^2$~edges incident to~$h$.
    It then follows that~$h$ is hit at least~$n$ times and thus in particular $f \geq n$.
    As this holds for every weak induced $\calG$-cover of~$H_n$, we obtain $\icn{f}{\calG}{H_n} \geq n$.

    Let~$C$ be a component of a cover graph~$G_i$ that hits the center~$h$ of~$H_n$.
    It remains to show that~$C$ covers at most~$n$ edges incident to~$h$.
    If~$C$ covers more than $n$~edges incident to~$h$, then~$C$ covers at least two distinct edges~$xh,x'h$ connecting~$h$ to a copy~$X$ of~$K_n$, and at least one edge~$yh$ connecting~$h$ to a different copy~$Y$ of~$K_n$, see \cref{fig:dense_hosts_induced_cover} for an illustration. 
    As~$\varphi(C)$ is an induced subgraph of~$H_n$, we have $xx' \in E(\varphi(C))$. 
    Yet, $xy,x'y \notin E(H_n)$.  
    That is,~$\varphi(C)$ is neither a star nor a complete graph, a contradiction to~$C \in \calG$ as $\varphi(C)$ is a star if~$C$ is a star (possibly with loops at the center), and~$\varphi(C)$ is a complete graph (possibly with loops) if~$C$ is a complete graph.
\end{proof}

\section{Discussion}

In this paper we provide boundedness results, which yield quadratic upper bounds on induced covering numbers in terms of the corresponding (non-induced) covering numbers.
While these results are not exhaustive, they nonetheless generalize previous observations where induced and non-induced graph parameters have been compared (for example for arboricity \cite{axenovich2019induced}).

In \cref{discussion:boundedness_separation}, we discuss how an approach towards proving boundedness in several of the settings we left open (cells marked in $\textcolor{openColorCond}\blacksquare$ in \cref{tab:overview}) could work, reducing all these cases to proving one statement for bipartite hosts and guests (cf. \cref{conjecture:bounded_u_H_bip_G_hereditary_bip}). 
Further, we remark upon the similar behavior of union-, local- and folded-induced covering numbers with respect to boundedness and separation. 
In \cref{discussion:algorithms_complexity}, we consider open questions regarding the algorithmic complexity of induced covering number problems.

\subsection{Boundedness and Separation}
\label{discussion:boundedness_separation}

If we do no take the property of bounded chromatic number into account, only for one setting, namely when the guest class~$\calG$ is monotone, and there is no restriction on~$\calH$ we do not know whether each such~$\calH$ is $(\icn{x}{\calG}{},\icn{x}{\calG}{})$-bounded, see \cref{tab:overview}.
Yet, many cases (six in total) where~$\calH$ or~$\calG$ have bounded chromatic number remain open. 
In four of these (corresponding to the cells marked in $\textcolor{openColorCond}\blacksquare$ in \cref{tab:overview}) we obtain $(\icn{x}{\calG}{},\cn{x}{\calG}{})$-boundedness if the following hypothesis holds:
\begin{hypothesis}
    \label{conjecture:bounded_u_H_bip_G_hereditary_bip}
        If~$\calG$ is a hereditary graph class of bipartite graphs, then there exists a function~$f$, such that for every bipartite graph~$H$, we have 
        \[
        \icn{u}{\calG}{H} \leq f(\cn{u}{\calG}{H}).
        \]
        In other words, if the edges of~$H$ can be covered with $k$~graphs from the union-closure~$\overline{\calG}$ of~$\calG$, then~$H$ can be covered with up to~$f(k)$ graphs from~$\overline{\calG}$ whose components are induced subgraphs of~$H$.
\end{hypothesis}

Indeed, under the above hypothesis, we obtain the following theorem:
\begin{theorem}
    \label{cond_thm:bounded_c_gulf_H_chi_G_hereditary}
    Assuming \cref{conjecture:bounded_u_H_bip_G_hereditary_bip}, there exists for every hereditary graph class~$\calG$ a function~$f$ such that we have for every graph~$H$
    \begin{itemize}
        \item $\icn{u}{\calG}{H} \leq \chi(H)^2 \cdot f(\cn{u}{\calG}{H})$,
        \item and for every $\lab{x} \in \set{\lab{u}, \lab{l}, \lab{f}}\colon \quad \icn{x}{\calG}{H}\leq \chi(H)^2 \cdot f(\chi(H)^2 \cdot \cn{x}{\calG}{H}^2).$
    \end{itemize}
    In particular, for hereditary guest class~$\calG$, every host class~$\calH$ of bounded chromatic number is $(\icn{x}{\calG}{},\cn{x}{\calG}{})$-bounded for $\lab{x} \in \set{\lab{u}, \lab{l}, \lab{f}}$.
\end{theorem}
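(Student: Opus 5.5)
Fix a proper coloring $\Phi\colon V(H)\to[c]$ with $c=\chi(H)$. For each pair $i<j$ let $B_{i,j}=H[\Phi^{-1}(i)\cup\Phi^{-1}(j)]$; this is an induced bipartite subgraph. Together with the trivial pieces, the at most $\binom{c}{2}\le c^2$ graphs $B_{i,j}$ form a cover of $H$ by induced subgraphs. The plan is to apply \cref{lem:cover_with_small_parts} with these parts. For this we need, for each $x$, a function bounding $\icn{x}{\calG}{B}$ in terms of $\cn{x}{\calG}{B}$ for bipartite induced subgraphs $B$ of $H$. By \cref{lem:restrict-cover-to-subgraph} we have $\cn{x}{\calG}{B_{i,j}}\le\cn{x}{\calG}{H}$, since $\calG$ is hereditary. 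Moreover, only the bipartite members of $\calG$ can embed homomorphically into a bipartite host, because a non-bipartite guest contains an odd cycle whose image contains an odd closed walk. So we may replace $\calG$ by $\calG_{\mathrm{bip}}=\calG\cap\{\text{bipartite graphs}\}$, which is still hereditary, without changing any (induced) covering number of $B_{i,j}$. Let $f$ be the function given by \cref{conjecture:bounded_u_H_bip_G_hereditary_bip} for $\calG_{\mathrm{bip}}$.

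For $x=\lab{u}$ this already finishes the argument. We get $\icn{u}{\calG}{B_{i,j}}\le f(\cn{u}{\calG}{B_{i,j}})\le f(\cn{u}{\calG}{H})$ (taking $f$ monotone). Summing over the $\le c^2$ parts, as in \cref{lem:cover_with_small_parts}, gives $\icn{u}{\calG}{H}\le\chi(H)^2 f(\cn{u}{\calG}{H})$.

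For $x\in\{\lab{l},\lab{f}\}$, the main obstacle is that the hypothesis only speaks about union covers. We therefore need to convert a bounded local or folded cover of a bipartite $B$ into a bounded union cover, and then use $\icn{x}{\calG}{B}\le\icn{u}{\calG}{B}$. For local covers the natural idea is a coloring argument: given an $\ell$-local cover of $B$, form the conflict graph on the guests, joining two guests if they share a vertex or are joined by an edge of $B$. Coloring this graph would group the guests into union-closed (vertex-disjoint) pieces. However, degrees in that conflict graph are not obviously bounded, since a guest may be large. I would try the route the target bound $\chi^2\cdot\cn{x}{}{}^2$ suggests. Each guest is bipartite, and in an $\ell$-local cover every vertex lies in at most $\ell$ guests. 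So assign to each vertex $v$ of $B$ an injective labelling of its $\le\ell$ guests by $[\ell]$, and give the guest copy of $v$ in guest $G$ the pair (label of $G$ at $v$, color class of $v$ in $\Phi$). For each label pair and color pair, take the subgraph of edges whose endpoints carry those labels. Grouping in this way should produce at most $c^2\ell^2$ union-type pieces whose components are induced subgraphs of guests, hence lie in $\calG$ by heredity. This yields $\cn{u}{\calG}{B}\le c^2\ell^2$, and for folded covers the same labelling is applied to preimages. Making this grouping genuinely produce vertex-disjoint unions is the step I expect to require the most care; if it fails, I would fall back on reproving \cref{lem:mad_G_bounded_bounded_folded_cov_for_induced_subgraphs_then_mad_H_bounded}-style degeneracy arguments.

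Granting $\cn{u}{\calG}{B_{i,j}}\le\chi(H)^2\cn{x}{\calG}{H}^2$, the hypothesis gives $\icn{x}{\calG}{B_{i,j}}\le f(\chi(H)^2\cn{x}{\calG}{H}^2)$. The union of the weak induced covers of the $B_{i,j}$ is weak induced in $H$, so summing gives the claimed bound. Boundedness for host classes with $\chi(\calH)<\infty$ is then immediate.
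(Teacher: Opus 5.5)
Your proof is correct. The first bullet is handled exactly as in the paper: split $H$ into the $\binom{\chi(H)}{2}$ induced bipartite subgraphs $B_{i,j}$, pass to the bipartite members of $\calG$, apply \cref{conjecture:bounded_u_H_bip_G_hereditary_bip} with a monotone $f$, and sum. For the second bullet your route differs. The paper does not prove a local/folded-to-union conversion itself. It cites \cite[Theorem~16]{goetze2025boundednessseparationgraphcovering}, namely $\cn{u}{\calG}{H}\le\chi(H)^2\cn{f}{\calG}{H}^2$ for hereditary $\calG$, applies it to $H$ itself, and chains $\icn{x}{\calG}{H}\le\icn{u}{\calG}{H}\le\chi(H)^2 f(\cn{u}{\calG}{H})\le\chi(H)^2 f(\chi(H)^2\cn{f}{\calG}{H}^2)$. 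You instead convert inside each bipartite part. That is equally valid, and your labelling idea is the right one. The step you left hedged goes through, provided each label is tied to a colour class.

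Here is the argument. Take an $\ell$-folded (or $\ell$-local) $\calG$-cover $\varphi\colon G_1\cupdot\dots\cupdot G_t\to B_{i,j}$, and label the preimages of each host vertex injectively by $[\ell]$. For $(a,b)\in[\ell]^2$, let $X_{a,b}$ be the set of guest vertices $x$ with either $\Phi(\varphi(x))=i$ and label $a$, or $\Phi(\varphi(x))=j$ and label $b$.
\begin{itemize}
\item Every host vertex has at most one preimage in $X_{a,b}$, because its colour decides which label is admissible. So $\varphi$ is injective on $X_{a,b}$.
\item The induced subgraphs $G_s[X_{a,b}\cap V(G_s)]$ lie in $\calG$ by heredity. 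They are mapped isomorphically onto pairwise vertex-disjoint subgraphs of $B_{i,j}$, so together they form one injective $\overline{\calG}$-guest.
\item Every guest edge joins a vertex of colour $i$ to one of colour $j$, so it lies in some $G_s[X_{a,b}\cap V(G_s)]$.
\end{itemize}
Hence $\cn{u}{\calG}{B_{i,j}}\le\ell^2$ with $\ell=\cn{x}{\calG}{B_{i,j}}\le\cn{x}{\calG}{H}$. This is even slightly better than the $c^2\ell^2$ you aimed for: it gives $\icn{x}{\calG}{H}\le\binom{\chi(H)}{2} f(\cn{x}{\calG}{H}^2)$.

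Two cautions. First, grouping only by an unordered label pair $\set{a,b}$ and the colour pair would fail, since a host vertex could then have two preimages in one class; the label--colour coupling is what makes the classes injective. Second, your fallback would not have worked: \cref{lem:mad_G_bounded_bounded_folded_cov_for_induced_subgraphs_then_mad_H_bounded} needs $\mad(\calG)<\infty$, which is not assumed here. The conflict-graph idea is likewise unnecessary. In short, the paper's proof is shorter because it outsources the conversion, while yours is self-contained and gives a marginally sharper bound.
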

\begin{proof}
    We show the theorem with the following three results:
    \begin{enumerate}[(i)]
        \item\label{lem:cover_with_induced_bipartite_graphs} (folklore, see for example \cite[Lemma~15]{goetze2025boundednessseparationgraphcovering} for a proof) If $H$ is a graph with $\chi(H) \leq k$, then $H$ is the union of at most $\binom{k}{2}$ bipartite induced subgraphs.
        \item\label{thm:bounded_c_gulf_H_chi_G_hereditary} (Goetze, Stumpf and Ueckerdt \cite[Theorem~16]{goetze2025boundednessseparationgraphcovering}) If~$\calG$ is hereditary, then we have for every graph~$H$
        \[
            \cn{u}{\calG}{H}\leq \chi(H)^2 \cdot \cn{f}{\calG}{H}^2.
        \] 
        \item\label{claim:bounded_u_H_bip_G_hereditary}
        If~$\calG$ is hereditary, then there exists a (monotone) function~$f$, such that for every bipartite graph~$H$, we have $\icn{u}{\calG}{H} \leq f(\cn{u}{\calG}{H})$.
    \end{enumerate}
    We first show how \cref{cond_thm:bounded_c_gulf_H_chi_G_hereditary} follows from \eqref{lem:cover_with_induced_bipartite_graphs}-\eqref{claim:bounded_u_H_bip_G_hereditary}.
    Consider a graph~$H$ with~$\chi(H) \leq k$.
    By \eqref{lem:cover_with_induced_bipartite_graphs}, there exist up to~$k^2$ bipartite induced subgraphs $B_1, \dots, B_t \subseteq H$ such that $\varphi\colon B_1 \cupdot \dots \cupdot B_t \to H$ is a $k^2$-global-induced cover of~$H$. 
    By \eqref{claim:bounded_u_H_bip_G_hereditary}, each of these graphs~$B_1, \dots, B_t$ has a weak-induced $\overline{\calG}$-cover with at most $f(\cn{u}{\calG}{B_i}) \leq f(\cn{u}{\calG}{H})$ guests (as $\calG$ is hereditary, cf. \cref{lem:restrict-cover-to-subgraph}).
    These guests yield a weak-induced $k^2 \cdot f(\cn{u}{\calG}{H})$-global $\overline{\calG}$-cover of~$H$. 
    That is, $\icn{u}{\calG}{H} \leq k^2 \cdot f(\cn{u}{\calG}{H})$,
    which shows the first claim of \cref{cond_thm:bounded_c_gulf_H_chi_G_hereditary}.
    In particular, we obtain 
    \begin{align*}
    \label{ineq:c_u_bounded_in_chi_c_f}
    \tag{$\star$}
    \cn{u}{\calG}{H} \leq \icn{u}{\calG}{H} \leq k^2 \cdot f(\cn{u}{\calG}{H}) \leq k^2 \cdot f(k^2 \cn{f}{\calG}{H}^2).
    \end{align*}
    where we used \eqref{thm:bounded_c_gulf_H_chi_G_hereditary} in the last step.
    The second claim of \cref{cond_thm:bounded_c_gulf_H_chi_G_hereditary}  now follows from the following calculation: for every~$\lab{x} \in \set{\lab{u}, \lab{l}, \lab{f}}$ we have by \eqref{ineq:c_u_bounded_in_chi_c_f}
    \[
    \icn{x}{\calG}{H} \leq \icn{u}{\calG}{H} \leq k^2 \cdot f(k^2 \cdot \cn{f}{\calG}{H}^2) \leq k^2 \cdot f(k^2 \cdot \cn{x}{\calG}{H}^2). 
    \]

    It only remains to prove \eqref{claim:bounded_u_H_bip_G_hereditary}.
    Let~$H$ be a bipartite graph and consider an injective $\overline{\calG}$-cover $\varphi\colon G_1 \cupdot \dots \cupdot G_t \to H$.
    Each~$G_i$ is a subgraph of~$H$ and as such bipartite. That is $\varphi$ is a $\calG'$-cover of~$H$ where $\calG' = \set{G \in \calG \given \text{$G$ is bipartite}}$. 
    In particular, we have $\cn{u}{\calG}{H} = \cn{u}{\calG'}{H}$. 
    Assuming \cref{conjecture:bounded_u_H_bip_G_hereditary_bip}, there exists a function~$f$ such that $\icn{u}{\calG}{H} = \icn{u}{\calG'}{H} \leq f(\cn{u}{\calG'}{H}) = f(\cn{u}{\calG}{H})$ for every bipartite graph~$H$.
    In fact, we may assume that~$f$ is monotone.
    Indeed, otherwise we may consider the monotone function~$f'\colon \N \to \N$ with $f'(n) = \max\set{f(n') \given n' \leq n}$ for every~$n \in \N$, which shows \eqref{claim:bounded_u_H_bip_G_hereditary}.
\end{proof}

In fact, \cref{cond_thm:bounded_c_gulf_H_chi_G_hereditary} would also settle two more open cases for the induced union-covering number, as the chromatic number of a graph~$H$ can be bounded in terms of the union covering number~$\cn{u}{\calG}{H}$ and~$\chi(\calG)$ for every graph class~$\calG$ (cf. \cref{lem:chi_of_host_bounded_in_chi_of_guest_and_union}).
\begin{corollary}
    Assuming \cref{conjecture:bounded_u_H_bip_G_hereditary_bip}, there exists for every hereditary graph class~$\calG$ with~$\chi(\calG) \leq k$ a function~$f$ such that we have for every graph~$H$
    \[
        \icn{u}{\calG}{H}\leq k^{2\cn{u}{\calG}{H}} \cdot f(\cn{u}{\calG}{H}).
    \] 
    In particular, for every hereditary guest class $\calG$ of bounded chromatic number, every host class $\calH$ is $(\icn{u}{\calG}{},\cn{u}{\calG}{})$-bounded.
\end{corollary}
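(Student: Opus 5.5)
The plan is to combine \cref{cond_thm:bounded_c_gulf_H_chi_G_hereditary} with a bound on $\chi(H)$ in terms of $\cn{u}{\calG}{H}$ and $\chi(\calG)$. This is the content of the referenced \cref{lem:chi_of_host_bounded_in_chi_of_guest_and_union}, which I expect to read: $\chi(H) \leq k^{\cn{u}{\calG}{H}}$ whenever $\chi(\calG) \leq k$. I would prove this directly, in case it is not available in exactly that form.

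First I establish the chromatic bound. Let $c = \cn{u}{\calG}{H}$ and take an injective $c$-global $\overline{\calG}$-cover $\varphi\colon G_1 \cupdot \dots \cupdot G_c \to H$. Each $G_i$ is a disjoint union of graphs in $\calG$, so $\chi(G_i) \leq k$. Since $\varphi$ is injective, each image $\varphi(G_i)$ is a subgraph of $H$ isomorphic to $G_i$. Fix a proper $k$-coloring $\psi_i$ of $\varphi(G_i)$, and extend it arbitrarily to vertices of $H$ not in $\varphi(G_i)$; those vertices carry no edges of $\varphi(G_i)$, so the extension is harmless. Color each $v \in V(H)$ by the tuple $(\psi_1(v), \dots, \psi_c(v)) \in [k]^c$. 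Every edge $uv$ of $H$ lies in some $\varphi(G_i)$, so $\psi_i(u) \neq \psi_i(v)$ and the tuples differ. Hence $\chi(H) \leq k^{c}$.

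Next I apply the first bullet of \cref{cond_thm:bounded_c_gulf_H_chi_G_hereditary}, which assumes \cref{conjecture:bounded_u_H_bip_G_hereditary_bip}. It gives a function $f$ depending only on $\calG$ with $\icn{u}{\calG}{H} \leq \chi(H)^2 \cdot f(\cn{u}{\calG}{H})$. Substituting $\chi(H) \leq k^{\cn{u}{\calG}{H}}$ yields $\icn{u}{\calG}{H} \leq k^{2\cn{u}{\calG}{H}} \cdot f(\cn{u}{\calG}{H})$. The right-hand side is a function of $\cn{u}{\calG}{H}$ alone, because $k$ is fixed by $\calG$. So every host class is $(\icn{u}{\calG}{},\cn{u}{\calG}{})$-bounded.

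The main obstacle is small: checking that injectivity in the union cover makes each $\varphi(G_i)$ genuinely $k$-colorable as a subgraph of $H$. This is where the argument needs union rather than folded covers, since a non-injective image of a $k$-colorable graph need not be $k$-colorable.
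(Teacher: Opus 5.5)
Your proposal is correct and matches the paper's route: the paper obtains this corollary by combining the first bullet of \cref{cond_thm:bounded_c_gulf_H_chi_G_hereditary} with \cref{lem:chi_of_host_bounded_in_chi_of_guest_and_union}, and that lemma is proved by the same product-coloring argument over an injective $\cn{u}{\calG}{H}$-global $\overline{\calG}$-cover that you give. Substituting $\chi(H) \leq k^{\cn{u}{\calG}{H}}$ into $\icn{u}{\calG}{H} \leq \chi(H)^2 \cdot f(\cn{u}{\calG}{H})$ then gives the stated bound, exactly as in your write-up.
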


\begin{lemma}
\label{lem:chi_of_host_bounded_in_chi_of_guest_and_union}
    For every guest class~$\calG$ with $\chi(\calG) \leq k$ and every graph~$H$, we have
    \[
        \chi(H) \leq k^{\cn{u}{\calG}{H}}.
    \]
\end{lemma}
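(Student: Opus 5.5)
Let $t = \cn{u}{\calG}{H}$ and fix an injective $t$-global $\overline{\calG}$-cover $\varphi\colon G_1 \cupdot \dots \cupdot G_t \to H$. The plan is the standard product-coloring argument. For each $i\in[t]$, the image $\varphi(G_i)$ is a subgraph of~$H$ isomorphic to $G_i$, and $G_i$ is a vertex-disjoint union of graphs from~$\calG$. Since each of these graphs has chromatic number at most~$k$, and the chromatic number of a disjoint union is the maximum over its components, we get $\chi(G_i)\le k$. Because $\varphi$ is injective on $G_i$, we can transport a proper $k$-coloring of $G_i$ to a proper coloring $c_i\colon V(\varphi(G_i))\to[k]$ of the subgraph $\varphi(G_i)\subseteq H$. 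We extend $c_i$ to all of $V(H)$ by assigning color~$1$ to every vertex not in $\varphi(G_i)$; this is harmless because the only edges we need $c_i$ to respect are those of $\varphi(G_i)$.

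Next we define $c\colon V(H)\to[k]^t$ by $c(v)=(c_1(v),\dots,c_t(v))$, which uses at most $k^t$ colors. To check that $c$ is proper, take any edge $uv\in E(H)$. Since $\varphi$ is edge-surjective, $uv\in E(\varphi(G_i))$ for some $i$. Both endpoints then lie in $\varphi(G_i)$, so $c_i(u)\neq c_i(v)$, and hence $c(u)\neq c(v)$. This gives $\chi(H)\le k^t$, as claimed.

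No step here is a real obstacle. The only point needing care is to remember that guests in a union cover come from $\overline{\calG}$ rather than $\calG$, so we must observe that $\chi(\overline{\calG})\le k$ still holds. We also need injectivity, which the definition of the union covering number provides, so that a coloring of $G_i$ yields a proper coloring of its image.
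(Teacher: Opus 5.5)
Your proof is correct and is the paper's product-coloring argument. You fix an injective $t$-global $\overline{\calG}$-cover, properly $k$-color each guest, and give each vertex of $H$ the $t$-tuple of colors it inherits via $\varphi$; your remarks that $\chi(\overline{\calG}) \le k$ and that vertices outside $\varphi(G_i)$ may get an arbitrary $i$-th coordinate spell out what the paper leaves implicit.
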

\begin{proof}
    Consider an (injective) $t$-global $\overline{\calG}$-cover $\varphi\colon G_1 \cupdot \dots \cupdot G_t \to H$. 
    Let~$\Phi_i\colon G_i \to [k]$ be a proper vertex-coloring of~$G_i$ for every~$i$.  
    Now consider the vertex-coloring~$\Psi$ of~$H$ where every vertex~$v \in V(H)$ is colored with a $t$-tuple~$(\Psi_1(v), \dots, \Psi_t(v))$ such that for every~$x \in V(G_i)$ with $\varphi(x) = v$, we have $\Psi_i(v) = \Phi_i(x)$.
    As $\varphi$ is injective, there is for every vertex~$v \in V(H)$ and~$i \in [t]$ at most one~$x \in V(G_i)$ with~$\varphi(x) = v$.
    Thus, such a coloring~$\Psi$ exists.

    It remains to show that the coloring~$\Psi$ is proper. 
    For every edge~$uv \in E(H)$ there exists an edge $xy \in E(G_i)$ for some~$i$ such that $\varphi(x) = u$ and~$\varphi(y) = v$.
    Since~$\Phi_i$ is proper, we have $\Psi_i(u) = \Phi_i(x) \neq \Phi_i(y) = \Psi_i(v)$.
    Thus, $\Psi$ is proper.
\end{proof}

\begin{remark}
    While the chromatic number~$\chi(H)$ of a graph~$H$ can be bounded in terms of $\chi(\calG)$ and $\cn{u}{\calG}{H}$ (cf. \cref{lem:chi_of_host_bounded_in_chi_of_guest_and_union}), it cannot be bounded in terms of $\chi(\calG)$ and $\cn{x}{\calG}{H}$ for $\lab{x} \in \set{\lab{l}, \lab{f}}$. 
    Indeed, shift graphs~$\shift(D)$ of directed graphs $D$ provide an example as $\cn{f}{\calB}{\shift(D)} \leq \cn{l}{\calB}{\shift(D)} \leq 2$ \cite[Proposition~21]{goetze2025boundednessseparationgraphcovering} where $\calB$ denotes the class of all bipartite graphs.
    Yet, $\chi(\shift(D)) \geq \chi(D)$ \cite[Problem 9.26 (a)]{lovasz1993combinatorial}.
\end{remark}

Recall that there is a hierarchy on the four induced covering numbers, that is for every graph class~$\calG$ and graph~$H$, we have
\[
    \icn{f}{\calG}{H} \leq \icn{l}{\calG}{H} \leq \icn{u}{\calG}{H} \leq \icn{g}{\calG}{H}.
\]
The open cases discussed above apart, one might also wonder (as has been done for covering numbers \cite{goetze2025boundednessseparationgraphcovering}) when there is $(\icn{g}{\calG}{},\icn{u}{\calG}{})$-, $(\icn{u}{\calG}{},\icn{l}{\calG}{})$- or $(\icn{l}{\calG}{},\icn{f}{\calG}{})$-boundedness.
In fact, in the case of covering numbers, boundedness and separation behave very similar for union, local and folded covering numbers \cite[p.\,21]{goetze2025boundednessseparationgraphcovering}.
We observe the same pattern with respect to $(\icn{x}{\calG}{},\cn{x}{\calG}{})$-boundedness (cf. \cref{tab:overview}). 
Indeed, while there are graph classes~$\calG$ and~$\calH$ such that~$\calH$ is not $(\icn{x}{\calG}{},\cn{x}{\calG}{})$-bounded for~$\lab{x} \in \set{\lab{u},\lab{l}}$ but $(\icn{f}{\calG}{},\cn{f}{\calG}{})$-bounded (cf. \cref{lem:sep_gul_H_bounded_tw_G_cc}) it seems that whenever there is separation or boundedness for some~$\lab{x} \in \set{\lab{u},\lab{l},\lab{f}}$ for every~$\calG$ and $\calH$ satisfying specific density properties, we also obtain the same result for all other $\lab{x}' \in \set{\lab{u},\lab{l},\lab{f}}$.
Is there a deeper relationship to be uncovered?

\subsection{Algorithms and Complexity}
\label{discussion:algorithms_complexity}
It would be of interest to study the computational complexity of determining induced covering numbers.
Recently, Lee, Liu and Tsai provided a result \cite{lee2026determining} which shows for many graph classes~$\calG$ that it is $\NP$-hard to decide whether $\cn{u}{\calG}{H} \leq k$ for a given graph~$H$ and integer~$k$.
Can this result be extended to the induced setting?

Some covering numbers, in particular for local variants, can be determined in polynomial time, this is for example the case for local star arboricity (covers with star forests) \cite[Theorem~25]{Knauer2016_3w3c1g}. 
That is, whenever we have $(\icn{l}{\calG}{},\cn{l}{\calG}{})$-boundedness for the underlying guest class~$\calG$ and the host class~$\calH$, we obtain an approximation algorithm for the corresponding induced covering number, see \cref{sec:introduction} for examples.
Can the algorithms for covering numbers be adapted to determine induced covering numbers? 
Here, a meta-theorem which settles the question for any guest class~$\calG$ satisfying some restrictions (as is the case for global covering numbers in \cite{lee2026determining}) would be of great interest.
It would also be interesting to study cases of guest classes~$\calG$ where the (non-induced) covering number can be determined in polynomial time, while this is $\NP$-hard in the induced setting.

\bibliographystyle{plainurl}
\bibliography{references}
\end{document}